\documentclass[reqno]{amsart}

\usepackage{amsmath,amssymb,amsthm}
\usepackage{mathrsfs}
\usepackage{shapepar}
\usepackage{graphicx}
\usepackage[colorlinks=true,final]{hyperref}

\newtheorem{thm}{Theorem}[section]
\newtheorem*{mthm}{Theorem}
\newtheorem{lem}[thm]{Lemma}
\newtheorem{cor}[thm]{Corollary}
\newtheorem{prop}[thm]{Proposition}
\newtheorem{claim}[thm]{Claim}

\theoremstyle{definition}
\newtheorem{defn}[thm]{Definition}
\newtheorem{rmk}[thm]{Remark}
\allowdisplaybreaks

\theoremstyle{remark}
\newtheorem{ex}[thm]{Example}

\DeclareMathOperator{\lt}{lt}
\DeclareMathOperator{\ind}{Ind}

\DeclareMathOperator{\supp}{supp}
\DeclareMathOperator{\spn}{span}

\newcommand{\Ind}[2]{\ind_{#1}^{#2}}
\newcommand{\R}{\mathbb{R}}
\newcommand{\C}{\mathbb{C}}
\newcommand{\Hi}{\mathfrak{H}}
\newcommand{\units}{G^{(0)}}

\newcommand{\haars}{\{\lambda^{u}\}_{u\in\units}}

\newcommand{\haarv}[2]{\lambda^{#2}_{#1}}
\newcommand{\A}{\mathscr{A}}
\newcommand{\B}{\mathscr{B}}
\newcommand{\inv}{^{-1}}
\newcommand{\linner}[3]{{{}_{{}_{#1}}}\!\left\langle #2, #3\right\rangle}
\newcommand{\rinner}[3]{\left\langle #1, #2\right\rangle_{{}_{#3}}}

\newcommand{\sect}[2]{\Gamma_0(#1, #2)}

\title{Proper Actions of Groupoids on $C^*$-algebras}
\author{Jonathan Henry Brown}
\address{Department of Mathematics, Dartmouth College, Hanover, NH 03755}
\curraddr{Department of Mathematics, Ben Gurion University of the Negev, P.O.B. 653, Be'er Sheva 84105, Israel}
\email{jonathan.henry.brown@gmail.com}
\keywords{Proper Actions, Groupoid crossed products, Generalized fixed point algebras, Reduced groupoid crossed products, locally compact groupoids, Morita equivalence}
\subjclass[2000]{46L05, 46L08}
\begin{document}
\maketitle
\begin{abstract}In 1990, Rieffel defined a notion of proper action of a group $H$ on a $C^*$-algebra $A$.  He then defined a generalized fixed point algebra $A^{\alpha}$ for this action and showed that $A^{\alpha}$ is Morita equivalent to an ideal of the reduced crossed product.  We generalize Rieffel's notion to define proper groupoid dynamical systems and show that the generalized fixed point algebra for proper groupoid actions is Morita equivalent to a subalgebra of the reduced crossed product.  We give some nontrivial examples of proper groupoid dynamical systems and show that if $(\A, G, \alpha)$ is a groupoid dynamical system such that $G$ is principal and proper, then the action of $G$ on $\A$ is saturated, that is  the generalized fixed point algebra in Morita equivalent to the reduced crossed product.

\end{abstract}
\begin{section}{Introduction}
In an effort to study deformation quantization of Poisson manifolds, Rieffel introduced in \cite{Rieffel90} a notion of proper group actions on $C^*$-algebras.  These actions are meant to behave like proper actions of groups on spaces.  To that end, he also defined a generalized fixed point algebra for proper dynamical systems which has some of the same properties as the generalized fixed point algebra, $C_0(G\backslash X)$, for a proper action of a group $G$ on a space $X$.  The main theorem of \cite{Rieffel90} shows that the generalized fixed point algebra for a proper dynamical system is Morita equivalent to an ideal of the reduced crossed product.  This generalizes a theorem of Green's \cite{Grn77} which gives a Morita equivalence between $C_0(X)\rtimes G$ and $C_0(G\backslash X)$ whenever $G$ acts freely and properly on $X$.  Since Rieffel introduced proper actions they have been studied in \cite{aHRW00}, \cite{aHRW03}, \cite{AHRW05},  \cite{KQR08}, \cite{MR09}, \cite{AKRW09}. 

In \cite{Rieffel90}, Rieffel also identifies a class of proper actions with the property that the generalized fixed point algebra is Morita equivalent to the reduced crossed product, Rieffel  calls these actions \emph{saturated}.  Saturated actions not only more closely resemble the situation in Green's theorem, they have also proved to be the most useful in applications \cite{aHRW00}, \cite{aHRW03}, \cite{AHRW05}.

The study of generalized fixed point algebras for proper group dynamical systems has lead to a wide range of interesting results in operator theory.  For example they have been used to prove results in nonabelian duality theory \cite{KQR08}, graph algebras \cite{aHRW03}, \cite{MR09} and the Brauer semigroup \cite{aHRW00}.   When one is interested in extending these results to the groupoid setting, one is naturally lead to seek an appropriate notion of a generalized fixed point algebra for groupoid dynamical systems and hence a notion of proper groupoid dynamical systems.

In this paper we propose a definition of proper groupoid dynamical systems and define a generalized fixed point algebra for these systems.    Our main theorem is as follows:
\begin{mthm}If a groupoid dynamical system is proper, then the generalized fixed point algebra for the action is Morita equivalent to a subalgebra of the reduced crossed product.\end{mthm}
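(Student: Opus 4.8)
The plan is to follow Rieffel's construction from \cite{Rieffel90}, producing an explicit imprimitivity bimodule by completing the dense $*$-subalgebra $\A_{0}\subseteq\A$ that the properness hypothesis supplies. The first task is to record the algebraic data on $\A_{0}$. Writing $\{\lambda^{u}\}_{u\in\units}$ for the Haar system and $a(u)$ for the image of $a\in\A$ in the fibre $\A_{u}$, and suppressing the modular function of $\{\lambda^{u}\}$ throughout, I would set
\[
\rinner{a}{b}{\A\rtimes_{r}G}(\gamma)=a(r(\gamma))^{*}\,\alpha_{\gamma}\bigl(b(s(\gamma))\bigr),
\qquad
\linner{\A^{\alpha}}{a}{b}_{u}=\int_{G^{u}}\alpha_{\gamma}\bigl(a(s(\gamma))\,b(s(\gamma))^{*}\bigr)\,d\lambda^{u}(\gamma),
\]
take $\A^{\alpha}:=\overline{\spn}\{\linner{\A^{\alpha}}{a}{b}:a,b\in\A_{0}\}$ for the generalized fixed point algebra, let $\A^{\alpha}$ act on $\A_{0}$ by fibrewise multiplication, and let $\A\rtimes_{r}G$ act on $\A_{0}$ by $(a\cdot f)(u)=\int_{G^{u}}\alpha_{\gamma}\bigl(a(s(\gamma))\,f(\gamma^{-1})\bigr)\,d\lambda^{u}(\gamma)$. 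Note that $\rinner{a}{b}{}(\gamma)$ lies in $\A_{r(\gamma)}$, so after completion it is of the type of element belonging to the crossed product, while $\linner{\A^{\alpha}}{a}{b}_{u}$ lies in $M(\A_{u})$.

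Next I would check the pre-imprimitivity bimodule axioms that are purely algebraic. The relation $\rinner{a}{b}{}^{*}=\rinner{b}{a}{}$, the module-linearity relations $\linner{\A^{\alpha}}{m\cdot a}{b}=m\cdot\linner{\A^{\alpha}}{a}{b}$ and $\rinner{a}{b\cdot f}{\A\rtimes_{r}G}=\rinner{a}{b}{\A\rtimes_{r}G}\cdot f$, and the compatibility relation $\linner{\A^{\alpha}}{a}{b}\cdot c=a\cdot\rinner{b}{c}{}$, are all Fubini-type computations that come down to the cocycle identity $\alpha_{\gamma\eta}=\alpha_{\gamma}\circ\alpha_{\eta}$ and the left-invariance of $\{\lambda^{u}\}$ (the first module-linearity relation also using the $G$-invariance of the elements of $\A^{\alpha}$). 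That $\linner{\A^{\alpha}}{a}{b}$ converges strictly in $M(\A_{u})$ fibrewise, is $G$-invariant---so that $\A^{\alpha}$ genuinely lies inside the fixed-point algebra of $M(\A)$---and assembles continuously over $\units$, is exactly what the definition of properness is arranged to provide; and positivity of $\linner{\A^{\alpha}}{a}{a}$ is immediate, its integrand being positive in $\A_{s(\gamma)}$.

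The step I expect to be the main obstacle is to show that $\rinner{a}{b}{\A\rtimes_{r}G}$---which is generally \emph{not} compactly supported on $G$, since a proper action need not render $G$ a proper groupoid---nonetheless defines an element of the \emph{reduced} crossed product, and that $\rinner{a}{a}{}\ge0$ there. Following Rieffel I would work inside the regular representation: let $\Hi$ be the Hilbert module carrying the regular representation $\ind$, which is faithful on $\A\rtimes_{r}G$ by definition of the reduced norm, and for $a\in\A_{0}$ define $T_{a}\colon\Hi\to\A$ by the same convolution formula as the right module action, $(T_{a}\xi)(u)=\int_{G^{u}}\alpha_{\gamma}\bigl(a(s(\gamma))\,\xi(\gamma)\bigr)\,d\lambda^{u}(\gamma)$. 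A direct computation identifies $T_{a}^{*}T_{b}$ with $\ind\bigl(\rinner{a}{b}{}\bigr)$; the $\alpha$-integrability of $a^{*}a$ furnished by properness gives a Cauchy--Schwarz bound $\|T_{a}\xi\|^{2}\le\|\linner{\A^{\alpha}}{a}{a}\|\,\|\xi\|^{2}$, so $T_{a}$ is bounded; and an approximation argument---truncating the section by compactly supported functions on $G$ and again invoking $\alpha$-integrability---places $T_{a}^{*}T_{b}$ in $\ind(\A\rtimes_{r}G)$. Since $\ind$ is isometric on $\A\rtimes_{r}G$, this puts $\rinner{a}{b}{}$ into $\A\rtimes_{r}G$; the factorization $\rinner{a}{a}{}=\ind^{-1}(T_{a}^{*}T_{a})$ gives positivity, and yields $\A\rtimes_{r}G$-linearity in the appropriate variable.

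With all of this in hand, fullness on the left holds by the definition of $\A^{\alpha}$, and taking $I:=\overline{\spn}\{\rinner{a}{b}{}:a,b\in\A_{0}\}\subseteq\A\rtimes_{r}G$ makes the bimodule full on the right by construction; the relations above show $I$ is closed under multiplication and involution, hence is a $C^{*}$-subalgebra of $\A\rtimes_{r}G$. So $\A_{0}$ is a pre-imprimitivity bimodule, and completing it in the norm $a\mapsto\|\rinner{a}{a}{}\|^{1/2}=\|\linner{\A^{\alpha}}{a}{a}\|^{1/2}$ produces an $\A^{\alpha}$--$I$-imprimitivity bimodule by the standard completion theorem for such data, which is the asserted Morita equivalence. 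Two further points require careful but routine bookkeeping beyond the obstacle above: verifying that the module actions and the operators $T_{a}$ take values in the completion of $\A_{0}$, and verifying that all the fibrewise constructions depend continuously on $u\in\units$, which is where upper-semicontinuous $C^{*}$-bundle techniques and Renault's disintegration results enter. Finally, it is essential throughout to work with the reduced crossed product rather than the full one: already for groups \cite{Rieffel90}, positivity and boundedness of the right inner product can fail against the full norm, which is precisely why the theorem yields a subalgebra of $\A\rtimes_{r}G$.
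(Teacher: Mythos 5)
Your architecture is essentially the paper's (both follow Rieffel): take the two inner products supplied by the properness hypotheses, verify the algebraic identities, get positivity and membership of the groupoid-side inner product by factoring it inside the regular representations, and complete. Two calibration remarks before the gap. First, the step you single out as the main obstacle is the easy one here: condition \eqref{def prop 1} of Definition \ref{proper dyn sys} already places $\linner{E}{a}{b}$ in $L^I(G,r^*\A)$, hence in $\A\rtimes_{\alpha,r}G$, and positivity follows from exhibiting $\ind\pi(\linner{E}{a}{a})$ as $S_a^*S_a$ in each regular representation (Lemma \ref{lem ind inner}) --- which is exactly your $T_a^*T_b$ computation. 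Second, the boundedness estimate $\|\linner{E}{ad}{ad}\|_r\le\|d\|_{M(A)}^2\|\linner{E}{a}{a}\|_r$ for $d$ in the fixed-point side is needed and is not among your ``Fubini-type'' identities; it follows from the same direct-integral formula by pulling $\bar\pi_u(d(u))$ out using the $\alpha$-invariance of $d$, but it has to be said.

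The genuine gap is the unproved assertion $\|\rinner{a}{a}{}\|^{1/2}=\|\linner{\A^{\alpha}}{a}{a}\|^{1/2}$, which is precisely where the paper spends the bulk of its effort. The generalized fixed point algebra $A^{\alpha}$ is by definition the closure of $D_0=\spn\{\rinner{a}{b}{D}\}$ in $M(A)$, whereas the completion machinery only hands you a Morita equivalence between $E$ and the closure of $D_0$ in the operator norm of $\mathcal{L}({}_{E}\overline{A_0})$, i.e.\ the imprimitivity algebra of the Hilbert $E$-module. The resulting $*$-homomorphism $A^{\alpha}\to\mathcal{L}({}_{E}\overline{A_0})$ is contractive by the boundedness estimate above, but nothing soft makes it injective: a priori some $d\in A^{\alpha}$ could act as $0$ on $\overline{A_0}$, in which case your argument would identify $E$ with a proper quotient of $A^{\alpha}$, not with $A^{\alpha}$ itself. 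The paper closes this by an explicit construction: a faithful direct-integral representation $\pi$ of $A$, a compactly supported vector $v$ with $\|\bar\pi(d)v\|$ near $\|d\|_{M(A)}$, an $a_0$ near an approximate identity, a symmetric $r$- and $s$-relatively compact neighborhood $V_{\epsilon}$ of $\units$ on which $\gamma\mapsto\alpha_{\gamma}(a_0^*(s(\gamma)))$ is nearly constant (Lemma \ref{lem act close}), and the vector $h(\gamma)=\chi_{V_{\epsilon}}(\gamma)v(s(\gamma))/(k\haarv{s(\gamma)}{}(V_{\epsilon}))$, for which $\langle\ind\pi(\linner{E}{ad}{ad})h,h\rangle\approx\|d\|_{M(A)}^2$ while $\langle\ind\pi(\linner{E}{a}{a})h,h\rangle\approx 1$. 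An equivalent alternative would be to verify the other Raeburn--Williams boundedness inequality, $\rinner{f\cdot x}{f\cdot x}{D}\le\|f\|_r^2\,\rinner{x}{x}{D}$ in $M(A)$ for $f\in E_0$, from which the norm equality is formal; but that comparison of the $M(A)$-order with the reduced norm is of the same depth and is likewise absent from your proposal.
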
  Note that, this theorem  generalizes both \cite[Theorem 1.5]{Rieffel90} and \cite[Corollary 15]{Grn77}.  We also present some examples of proper groupoid dynamical systems and give conditions that guarantee that these examples are saturated.  To prove saturation in our examples, we needed to use a new averaging argument to overcome the fact that translations of open sets in groupoids are not necessarily open.  We believe this argument can be applied to prove other density results.  Along the way we recover a result in \cite{MW90} showing that $C^*(G)$ has continuous trace when $G$ is principal and proper.  Although our results are about reduced crossed products, the work of \cite{AR00} shows that the groupoids in our examples are amenable, so in these examples our results apply to the full crossed product.

We should note that there is some debate in the literature about the correct definition of proper \emph{group} dynamical systems \cite{Ex99},\cite{Meyer01},\cite{Rieff04}.  We have chosen to generalize Rieffel's original definition  \cite{Rieffel90} since (while it is not intrinsic) it gives a Morita equivalence result with the generalized fixed point algebra and is thus the definition most widely used in applications.  

We begin with a section on preliminaries which include a brief introduction to groupoid dynamical systems, induced representations and the reduced groupoid crossed product.  In Section 3 we state the definitions of proper groupoid dynamical systems and the generalized fixed point algebra and prove our main theorem.  Section 4 is devoted to fleshing out two examples and Section 5 is devoted showing freeness guarantees that these examples are saturated.  

\begin{subsection}{Conventions}Throughout this paper we will use the follow conventions.  If $A$ is a $C^*$-algebra, then $M(A)$ will denote the multiplier algebra of $A$ and $Z(A)$ will denote its center.  If $\pi:A\rightarrow B$ is nondegenerate, then $\bar{\pi}$ will denote its extension to $M(A)$.   If $X_1$ and $X_2$ are spaces with maps $\tau_i:X_i\rightarrow T$, then $X_1*X_2$ denotes the set $\{(x,y)\in X_1\times X_2:\tau_1(x)=\tau_2(y)\}$. Throughout, $G$ will denote a second countable locally compact Hausdorff groupoid with Haar system $\haars$.  We will use the notational conventions for groupoids established in \cite{Mcoord} which are the same as those in \cite{Ren80} except that we use $s$ to denote the source map.  If $G$ acts on a topological space $X$, then $X$ is fibred over $\units$ by a map $r_X$, furthermore there exists a map $\Phi:G*X\rightarrow X\times X$ given by $(\gamma,x)\mapsto (\gamma\cdot x,x)$.  We say that the action of $G$ on $X$ is \emph{free} if this map is injective and we say the action is \emph{proper} if $\Phi$ is a proper map.  Note that if $G$ acts properly on a locally compact Hausdorff space $X$, then the orbit space $X/G$ is locally compact and Hausdorff. We say $G$ is \emph{principal} if the natural action of $G$ on its unit space given by $\gamma\cdot s(\gamma)=r(\gamma)$ is free, we say $G$ is \emph{proper} if this action is proper.  We will show in Proposition \ref{ex scalar} that proper actions of groupoids on spaces give rise to proper groupoid dynamical systems as defined in Definition \ref{proper dyn sys}, so there should be no cause for confusion between the two uses of the word \emph{proper}.    Unless otherwise stated we will assume that all of our $C^*$-algebras are separable and all spaces $X$ are locally compact and Hausdorff.\end{subsection}
\end{section}
\begin{section}{Preliminaries}
\begin{subsection}{$C_0(X)$-algebras}
Groupoids must act on fibred objects, so to construct an appropriate notion of a groupoid dynamical system we need to have fibred $C^*$-algebras.  To that end, given a locally compact Hausdorff space $X$, a \emph{$C_0(X)$-algebra} is a $C^*$-algebra $A$ together with a nondegenerate homomorphism of $C_0(X)$ into $Z(M(A))$. $C_0(X)$-algebras are well studied objects in their own right, but for our needs it is enough to know that they have an associated fibred structure.  Specifically, if $C_{0,x}(X)$ is the set of functions in $C_0(X)$ vanishing at $x\in X$, then $I_x:=\overline{C_{0,x}(X)\cdot A}$ is an ideal in $A$ and $A(x):=A/I_x$ is called the fibre of $A$ over $x$.  The image of $a$ in $A(x)$ is denoted by $a(x)$, and the set $\{A(x):x\in X\}$ gives rise to an \emph{upper semicontinuous $C^*$-bundle} $\A$ over $X$ \cite[Theorem C.26]{TFB2}. 

\begin{defn}Let $X$ be a locally compact Hausdorff space, an {\emph{upper semicontinuous $C^*$-bundle}} over $X$ is a topological space $\A$ together with a continuous open surjection $p=p_{\A}:\A\rightarrow X$ such that  each fibre $A(x):=p\inv(\{x\})$ \index{$A(x)$} is a $C^*$-algebra and $\A$ satisfies the following axioms:
\begin{enumerate}
\item[B1] The map $a\mapsto \|a\|$ is upper semicontinuous from $\A$ to $\R^+$  (That is, for all $\epsilon>0$, $\{a\in \A: \|a\|<\epsilon\}$ is open),
\item[B2] The maps $(a,b)\mapsto a+b$ and $(a,b)\mapsto ab$ are continuous from $\A*\A$ to $\A$,
\item[B3] For each $k\in\C$, the maps $a\mapsto ka$ and $a\mapsto a^*$ are continuous from $\A$ to $\A$, and 
\item[B4] If $\{a_i\}$ is a net in $\A$ such that $p(a_i)\rightarrow x$ and $\|a_i\|\rightarrow 0$, then $a_i\rightarrow 0_x$ (where $0_x$ is the zero element of $A(x)$).
\end{enumerate}
\label{defn usccb}\end{defn}

The point is if we let $A=\sect{X}{\A}$ be the $C^*$-algebra of continuous sections of $\A$ vanishing at infinity then $A$ is a $C_0(X)$-algebra.  Throughout this paper we will denote bundles by script letters $\A$ and the corresponding section algebras by the corresponding Roman letter $A$.  For a more detailed discussion of $C_0(X)$-algebras the reader is encouraged to see \cite[Appendix C]{TFB2}.  
\end{subsection}
\begin{subsection}{The Reduced Crossed Product}\label{sec red cross}
\begin{defn}\label{def dyn sys}Suppose that $G$ is a second countable locally compact groupoid with Haar system $\haars$ and $\A$ is an upper semicontinuous $C^*$-bundle over $\units$. Suppose  the associated $C_0(X)$-algebra, $A=\sect{\units}{\A}$ is separable. An action $\alpha$ of $G$ on $A$ is a family of $*$-isomorphisms $\{\alpha_{\gamma}\}_{\gamma\in G}$ such that 
\begin{enumerate}
\item\label{dyn sys rs} for each $\gamma\in G$, $\alpha_{\gamma}:A(s(\gamma))\rightarrow A(r(\gamma))$,
\item\label{dyn sys alg}for all $(\gamma,\eta)\in G^{(2)}$, $\alpha_{\gamma\eta}=\alpha_{\gamma}\circ\alpha_{\eta}$,
\item\label{dyn sys cts}the map $(\gamma, a)\mapsto \alpha_{\gamma}(a)$ is a continuous map from $G*\A$ to $\A$.
\end{enumerate}
The triple $(\A,G,\alpha)$ is called a (groupoid) dynamical system.\end{defn}

The point is that given a dynamical system, we can construct a convolution algebra which we then complete to obtain the reduced crossed product.  The remainder of this section is devoted to a sketch of this construction.  First we need the following definition.
\begin{defn}Let $(\A, G,\alpha)$ be a groupoid dynamical system, we define the pullback bundle of $\A$ to be
\begin{equation}\label{r*A}r^*\A:=\{(\gamma,a):r(\gamma)=p_{\A}(a)\}.\end{equation}\end{defn}

\begin{prop}[{\cite[Proposition 4.4]{MW08}}]Let $G$ be a groupoid with Haar system $\haars$, if we define $\Gamma_c(G,r^*\A)$ to be the set of continuous compactly supported sections of $r^*\A$, then  $\Gamma_c(G,r^*\A)$ is a $*$-algebra with respect to the operations
\begin{equation*}f*g(\gamma):=\int_Gf(\eta)\alpha_{\eta}\left(g(\eta\inv\gamma)\right)d\haarv{}{r(\gamma)}(\eta) \text{~and~}f^*(\gamma)=\alpha_{\gamma}\left(f(\gamma\inv)^*\right).\end{equation*}\label{eq conv struc}\end{prop}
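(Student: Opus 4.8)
The plan is to check first that both operations return elements of $\Gamma_c(G,r^*\A)$ and then to verify the $*$-algebra axioms, the latter being essentially formal once the left-invariance of $\haars$ and the fibrewise properties of $\alpha$ are in hand. For well-definedness of the convolution, fix $f,g\in\Gamma_c(G,r^*\A)$ and $\gamma\in G$. Since $\haarv{}{r(\gamma)}$ is supported on $G^{r(\gamma)}$, only $\eta$ with $r(\eta)=r(\gamma)$ contribute; for such $\eta$ we have $f(\eta)\in A(r(\gamma))$ and $g(\eta\inv\gamma)\in A(r(\eta\inv\gamma))=A(s(\eta))$, so $\alpha_\eta(g(\eta\inv\gamma))\in A(r(\eta))=A(r(\gamma))$, and the integrand takes values in the single $C^*$-algebra $A(r(\gamma))$. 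I would then note that $\eta\mapsto f(\eta)\alpha_\eta(g(\eta\inv\gamma))$ is continuous, by axiom (B2) of Definition~\ref{defn usccb}, Definition~\ref{def dyn sys}(\ref{dyn sys cts}), and continuity of the groupoid operations, and that it is supported in the compact set $\supp f\cap\gamma\cdot(\supp g)\inv$; hence the integral converges in $A(r(\gamma))$ and $f*g$ is a section of $r^*\A$. Compact support of $f*g$ is immediate from $\supp(f*g)\subseteq\supp f\cdot\supp g$, which is the image of the compact set $\{(\sigma,\tau)\in\supp f\times\supp g:s(\sigma)=r(\tau)\}$ under multiplication. For the involution, $f(\gamma\inv)^*\in A(s(\gamma))$, so $f^*(\gamma)=\alpha_\gamma(f(\gamma\inv)^*)\in A(r(\gamma))$, continuity follows from (B3), continuity of inversion, and Definition~\ref{def dyn sys}(\ref{dyn sys cts}), and $\supp f^*=(\supp f)\inv$ is compact.

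Next I would record the fibrewise facts needed: taking $\gamma=\eta=u$ a unit in Definition~\ref{def dyn sys}(\ref{dyn sys alg}) gives $\alpha_u=\alpha_u\circ\alpha_u$, and since $\alpha_u$ is invertible $\alpha_u=\id$; consequently $\alpha_{\gamma\inv}=\alpha_\gamma\inv$. Bilinearity of $*$ and conjugate-linearity of $f\mapsto f^*$ are then immediate from linearity of the integral and of each $\alpha_\gamma$. The identity $(f^*)^*=f$ is the short computation $(f^*)^*(\gamma)=\alpha_\gamma(f^*(\gamma\inv)^*)=\alpha_\gamma(\alpha_{\gamma\inv}(f(\gamma)))=\alpha_{r(\gamma)}(f(\gamma))=f(\gamma)$, using that each $\alpha_\gamma$ is a $*$-homomorphism. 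For $(f*g)^*=g^**f^*$ and for associativity $(f*g)*h=f*(g*h)$, the essential tool is left-invariance, $\int_G h(\eta)\,d\haarv{}{s(\gamma)}(\eta)=\int_G h(\gamma\inv\zeta)\,d\haarv{}{r(\gamma)}(\zeta)$: expanding $(f*g)^*(\gamma)$, using that $\alpha_\gamma$ is a continuous $*$-homomorphism between fibres (so it commutes with the integral and with adjoints) together with the cocycle identity $\alpha_\gamma\circ\alpha_\eta=\alpha_{\gamma\eta}$, and then substituting $\zeta=\gamma\eta$, one simplifies the resulting groupoid elements (using $r(\zeta)=r(\gamma)$, so $\zeta\inv r(\gamma)=\zeta\inv$ and $\zeta\zeta\inv\gamma=\gamma$) and recognizes the integral as $(g^**f^*)(\gamma)$. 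Associativity is handled the same way: write both sides as iterated integrals, apply Fubini, and change variables in the inner integral by left-invariance so that the two expressions coincide.

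The main obstacle is none of the above algebra but the continuity of $f*g$ as a section of $r^*\A$: one must show that integrating the continuous, compactly supported section $\eta\mapsto f(\eta)\alpha_\eta(g(\eta\inv\gamma))$ against the continuous family of measures $\haars$ depends continuously on $\gamma$. Over a continuous Banach bundle this is classical, but over an upper semicontinuous $C^*$-bundle it requires a more delicate argument, and it is precisely the technical content of \cite[Proposition 4.4]{MW08} that we invoke here; granting it, the remaining verifications are the routine fibrewise $C^*$-identities and the cocycle and left-invariance computations sketched above.
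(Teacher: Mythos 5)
Your outline is correct, and it matches the only "proof" the paper offers, namely the citation of \cite[Proposition 4.4]{MW08}: the fibrewise bookkeeping ($f(\eta)\alpha_{\eta}(g(\eta\inv\gamma))\in A(r(\gamma))$, $f^*(\gamma)\in A(r(\gamma))$, $\alpha_u=\id$, the left-invariance substitutions for $(f*g)^*=g^**f^*$ and associativity, and the support estimates) is exactly the standard verification, and you correctly isolate the one genuinely non-routine point, the continuity of $\gamma\mapsto (f*g)(\gamma)$ over an upper semicontinuous bundle, as the technical content being outsourced to \cite{MW08}. Since the paper itself proves nothing here, there is no gap to report beyond noting that a self-contained argument would still owe that continuity lemma.
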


The goal is to complete this convolution algebra in the norm induced by regular representations. Since we use regular representations extensively in the sequel we will sketch their construction here.  To continue we need the notion of a Borel Hilbert bundle.  For our purposes a Borel Hilbert bundle $X*\Hi$ over $X$ is bundle of Hilbert spaces $X*\Hi=\{\mathcal{H}(x)\}_{x\in X}$ along with a Borel structure satisfying some technical conditions (see \cite[Definition F.1]{TFB2}). Given a measure $\mu$ on $X$ we can form the Hilbert space $L^2(X*\Hi, \mu)$ in the obvious way.  This Hilbert space is just the direct integral $\int_X^{\oplus}\mathcal{H}(x)d\mu(x)$ and gives us the notion of a fibred Hilbert space that we need for groupoid representations. 

Suppose $\pi$ is a (separable) $C_0(\units)$-linear representation of $A$ on $\mathcal{H}_{\pi}$.  Then by \cite[Proposition F.26]{TFB2} there exists a Borel Hilbert bundle $\units*\Hi$, a measure $\mu_{\pi}=\mu$ on $\units$ (note: $\mu$ need not be quasi invariant) and a Borel family of representations $\{\pi_u\}_{u\in\units}$ of $A$ on $\mathcal{H}(u)$ such that $\pi$ is unitarily equivalent to the representation 
\begin{equation}\rho=\int_{\units}^{\oplus}\pi_u d\mu(u) \quad\text{given by}\quad\rho(a)h(u)=\pi_u(a)h(u).  \label{eq dir int}\end{equation}
It is not hard to see from the proof of \cite[Proposition F.26]{TFB2} that $I_u\subset\ker(\pi_u)$ $\mu$-almost everywhere so that $\pi_u$ descends to a well defined representation on $A(u)$.
Therefore
\begin{equation}\pi_u(a)h(u)=\pi_u(a(u))h(u) \quad\mu\text{-almost everywhere}.\label{eq piu well def}\end{equation}
We can then form the pull-back Hilbert bundle $s^*(\units*\Hi)=:G*_s\Hi$ and  define the measure $\nu\inv=\int_{\units}\haarv{u}{}d\mu$ to form a new Hilbert space $L^2(G*_s\Hi,\nu\inv)$.  Note that functions $h\in L^2(G*_s\Hi,\nu\inv)$ have the property that $h(\gamma)\in \mathcal{H}(s(\gamma))$.  So that, 
\begin{equation}\label{eq def indpi}\Ind{}{}\pi(f)h(\gamma)=\int_G \pi_{s(\gamma)}\left(\alpha_{\gamma}\inv(f(\eta))\right)h(\eta\inv\gamma)\haarv{}{r(\gamma)}(\eta)\end{equation}
defines a representation of  $\Gamma_c(G, r^*\A)$ induced by $\pi$  on $L^2(G*_s\Hi,\nu\inv)$.
We call these representations \emph{regular} and define the reduced norm on $\Gamma_c(G,r^*\A)$ to be 
\begin{equation}\|f\|_r:=\sup\{\|\ind\pi(f)\|:
\pi \text{~is a $C_0(\units)$-linear representation of $A$}\}.\label{eq red norm}\end{equation}

\begin{rmk}This definition agrees with those given in \cite{Mcoord} and \cite{Ren80}, but is \emph{a priori} different from that given in \cite{AR00}.  We suspect that all of these definitions agree, but have yet to prove it.  But the set of regular representations used in \cite{AR00} is the subset of the regular representations defined above such that $\mu_{\pi}$ is a point mass measure. Thus $\|\cdot\|_r$ is greater than or equal to the norm $\|\cdot\|_{\text{red}}$ considered in \cite{AR00} which is enough for our purposes.\end{rmk}

As usual we can  define the reduced crossed product of a dynamical system $(\A,G, \alpha)$, denoted $\A\rtimes_{\alpha,r}G$, to be the completion of $\Gamma_c(G,r^*\A)$ in the norm $\|\cdot\|_r$.

In this paper we will also use the $I$-norm on $\Gamma_c(G,r^*\A)$ given by 
$$\|f\|_I:=\max\left\{\sup\left\{\int\|f\|d\lambda_u\right\},\sup\left\{\int\|f\|d\lambda^u\right\}\right\}.$$
We denote the completion of $\Gamma_c(G,r^*\A)$ in this norm by $L^I(G,r^*\A)$.
\end{subsection}
\end{section}

\begin{section}{Proper Actions}

\begin{subsection}{Defining Proper Dynamical Systems}
The following definition is modeled after \cite[Definition 1.2]{Rieffel90}.
\begin{defn}\label{proper dyn sys}\index{proper dynamical system} Suppose $(\A, G, \alpha)$ is a groupoid dynamical system and let $A=\sect{\units}{\A}$ be the associated $C_0(\units)$-algebra.  We say that the dynamical system $(\A, G, \alpha)$ is {\emph{proper}} if there exists a dense $*$-subalgebra $A_0\subset A$, such that the following two conditions hold:
\begin{enumerate}
\item \label{def prop 1}For all $a,b\in A_0$, the function $\linner{E}{a}{b}:\gamma\mapsto a(r(\gamma))\alpha_{\gamma}\left(b(s(\gamma))^*\right)$ is integrable. That is, the function $\gamma\mapsto \|\linner{E}{a}{b}(\gamma)\|$ is in $L^I(G)$.\index{$M(A_0)^{\alpha}$}
\item \label{def prop 2}Let \begin{equation} \label{eqM(A0)alpha} M(A_0)^{\alpha}:=\left\{d\in M(A): A_0d \subset A_0, ~\bar{\alpha}_{\gamma}(d(s(\gamma)))=d(r(\gamma)) \right\}.\end{equation}
Then for all $a,b\in A_0$, there exists a unique element $\rinner{a}{b}{D}\in M(A_0)^{\alpha}$ such that for all $c\in A_0$
 $$(c\cdot \rinner{a}{b}{D})(u)=\int_G c(r(\gamma))\alpha_{\gamma}\left(a^*b(s(\gamma))\right)d\haarv{}{u}(\gamma).$$
 \end{enumerate}
 \end{defn}

For a proper dynamical system, $(\A, G, \alpha)$, we denote $\spn\{\linner{E}{a}{b}:a,b\in A_0\}$ by $E_0$.  Now since the functions $\linner{E}{a}{b}$ are integrable, $E_0\subset \A\rtimes_{\alpha,r}G$ and we denote $E=\overline{E_0}$ in $A\rtimes_{\alpha,r}G$.

\begin{rmk}One may wonder at first why we chose $A_0\subset A$ instead of $A_0\subset \A$.  But since condition \eqref{def prop 1} is a condition about integrability of sections, $A_0$ had to be a subset of the section algebra instead of a subset of the bundle.\end{rmk}

\begin{rmk}In \cite[Section 1]{Rieffel90}, for a group dynamical system $(B,H, \beta)$, Rieffel defines 
$M(B_0)^{\beta}:=\left\{d\in M(B): B_0d \subset B_0, ~\bar{\beta}_{\gamma}(d)=d\right\}.$  That is $M(B_0)^{\beta}$ is the set of $\beta$-invariant elements of $M(B)$ that map $B_0$ to itself.  However, in a groupoid dynamical system $(\A,G, \alpha)$, $\alpha_{\gamma}:A(s(\gamma))\rightarrow A(r(\gamma))$, thus if $s(\gamma)\neq r(\gamma)$ then $\bar{\alpha}_{\gamma}(c)$ can not equal $c$ for $c\in M(A(s(\gamma)))$.  However, if $d\in M(A)$, then $d$ fibres over $\units$ and $\alpha_{\gamma}\left(d(s(\gamma))\right)$ acts on $A(r(\gamma))$.  So we will call $d\in M(A)$ $\alpha$-invariant if $\bar{\alpha}_{\gamma}(d(s(\gamma)))=d(r(\gamma))$ for all $\gamma\in G$. This is how we define it in \eqref{eqM(A0)alpha}.

To see that this is a reasonable definition, first note that if $G$ is a group then $r(\gamma)=s(\gamma)=e$ for all $\gamma\in G$.  Thus $\bar{\alpha}_{\gamma}(d(s(\gamma)))=d(r(\gamma))$ reduces to $\bar{\alpha}_{\gamma}(d)=d$, which is the definition of $\alpha$-invariant in the group case.  Also compare to \cite[Proposition 3.11]{AR00} and consider the following example.
\begin{ex} Let $A=C_0(\units)$, then $A$  is a $C_0(\units)$-algebra and the associated upper semicontinuous $C^*$-bundle is $\mathscr{T}:=\units\times \C.$  Let $G$ act on $\mathscr{T}$ by left translation, that is $\lt_{\gamma}(s(\gamma),\xi)=(r(\gamma),\xi)$.    Now $M(A)=\Gamma^b(\units,\mathscr{T})$, so if $d\in M(A)$ is $\lt$-invariant, then $(r(\gamma),d(r(\gamma)))=\bar{\lt}_{\gamma}((s(\gamma), d(s(\gamma))))=(r(\gamma),d(s(\gamma)))$. That is $d$ is constant on orbits and we can view $d$ as a function in $C^b(G\backslash\units)$.
\label{ex d constant orbits}\end{ex}

\end{rmk}

\begin{ex}\label{ex proper group dyn sys}Suppose $(B,H,\beta)$ is a proper group dynamical system with respect to the subalgebra $B_0$ as in \cite[Definition 1.2]{Rieffel90}.  Then $(B,H,\beta)$ is a proper groupoid dynamical system with respect to Definition \ref{proper dyn sys} once we make the standard allowances for the lack of modular function in the groupoid definition. 
\end{ex}
\begin{rmk}\cite[Definition 1.2]{Rieffel90} has an extra condition that we do not assume in Definition \ref{proper dyn sys}.  He assumes that $\beta_s(A_0)\subset A_0$ for all $s$ in the group $H$ where $(B, H,\beta)$ is a group dynamical system.  This assumption allows him to show that  $E$ is an ideal in the reduced crossed product.  Unfortunately, we have not yet been able to find a well defined analogous condition for groupoid dynamical systems.  This means a group dynamical system  can be a proper groupoid dynamical system without being a proper group dynamical system under \cite[Definition 1.2]{Rieffel90}.  
\end{rmk}

\begin{lem} If $(\A,G,\alpha)$ is a proper dynamical system then the action 
\begin{equation}(f\cdot c)(u):=\int_G f(\gamma)\alpha_{\gamma}(c(s(\gamma)))d\haarv{}{u}(\gamma).\label{eq li act}\end{equation}
for $f\in L^I(G, r^*\A)$ and $c\in A$, and inner product in condition \eqref{def prop 1} of Definition \ref{proper dyn sys}  define a  pre-Hilbert module structure on ${_{E_0}}\!A_0$.
\label{lem hil mod}\end{lem}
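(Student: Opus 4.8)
The plan is to verify, one by one, the axioms of a left pre-Hilbert $E_0$-module for $A_0$: that $A_0$ is a left module over the $*$-algebra $E_0$ under the action \eqref{eq li act} (applied to $f\in E_0\subseteq L^I(G,r^*\A)$ and $c\in A_0$), that $\linner{E}{\cdot}{\cdot}\colon A_0\times A_0\to E_0$ is sesquilinear with $\linner{E}{a}{b}^*=\linner{E}{b}{a}$ and $\linner{E}{f\cdot a}{b}=f*\linner{E}{a}{b}$ for $f\in E_0$, and that $\linner{E}{a}{a}\geq 0$ in $\A\rtimes_{\alpha,r}G$ with $\linner{E}{a}{a}=0$ only if $a=0$. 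Sesquilinearity is immediate, so the work lies in the remaining three points.

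For the module structure the key point is that condition \eqref{def prop 2} of Definition \ref{proper dyn sys} rewrites the $E_0$-action through $M(A_0)^{\alpha}$: for $a,b,c\in A_0$, expanding $\linner{E}{a}{b}(\gamma)\,\alpha_{\gamma}(c(s(\gamma)))$ and using that each $\alpha_\gamma$ is multiplicative gives
\[(\linner{E}{a}{b}\cdot c)(u)=\int_G a(r(\gamma))\,\alpha_{\gamma}\big((b^*c)(s(\gamma))\big)\,d\haarv{}{u}(\gamma)=(a\cdot\rinner{b}{c}{D})(u),\]
whence $\linner{E}{a}{b}\cdot c=a\cdot\rinner{b}{c}{D}\in A_0$ since $\rinner{b}{c}{D}\in M(A_0)^{\alpha}$ and $A_0\rinner{b}{c}{D}\subseteq A_0$. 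Thus $E_0\cdot A_0\subseteq A_0$; the same identities also show that $E_0$ is a $*$-subalgebra of $\A\rtimes_{\alpha,r}G$, so the statement is well posed. Associativity $(f*g)\cdot c=f\cdot(g\cdot c)$ is a Fubini computation using left invariance of $\haars$, justified because the iterated integrals converge absolutely by the $I$-norm estimates. The identity $\linner{E}{a}{b}^*=\linner{E}{b}{a}$ uses only the groupoid relations $r(\gamma\inv)=s(\gamma)$, $s(\gamma\inv)=r(\gamma)$, that each $\alpha_\gamma$ is a $*$-isomorphism, and $\alpha_\gamma\circ\alpha_{\gamma\inv}=\id$; and $\linner{E}{f\cdot a}{b}=f*\linner{E}{a}{b}$ is another change of variables $\eta\mapsto\gamma\inv\eta$ inside the integral defining $f*\linner{E}{a}{b}$, after rewriting $\eta\inv\gamma$ as an inverse.

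The heart of the matter is positivity of $\linner{E}{a}{a}$. Since, by \eqref{eq red norm}, $\A\rtimes_{\alpha,r}G$ is faithfully represented by the direct sum of the regular representations, it suffices to show $\Ind{}{}\pi(\linner{E}{a}{a})\geq 0$ for each $C_0(\units)$-linear representation $\pi=\int^{\oplus}_{\units}\pi_u\,d\mu(u)$ of $A$ on $L^2(\units*\Hi,\mu)$. Substituting $f=\linner{E}{a}{b}$ into \eqref{eq def indpi}, pulling the factor $\pi_{s(\gamma)}\big(\alpha_{\gamma\inv}(a(r(\gamma)))\big)$ to the front, and then substituting $\eta\mapsto\gamma\inv\eta$ by left invariance of $\haars$ (which turns $\eta\inv\gamma$ into $\zeta\inv$), one obtains, for every compactly supported section $h$ of $G*_s\Hi$,
\[\Ind{}{}\pi(\linner{E}{a}{b})h(\gamma)=\pi_{s(\gamma)}\big(\alpha_{\gamma\inv}(a(r(\gamma)))\big)\,(T_b^{*}h)(s(\gamma)),\]
where $(T_b^{*}h)(u):=\int_G\pi_u\big(\alpha_{\zeta}(b(s(\zeta))^*)\big)\,h(\zeta\inv)\,d\haarv{}{u}(\zeta)$; a direct computation shows that $T_c^{*}$, so defined, is the formal adjoint of the densely defined operator $T_c\colon L^2(\units*\Hi,\mu)\to L^2(G*_s\Hi,\nu\inv)$, $(T_c\xi)(\gamma):=\pi_{s(\gamma)}\big(\alpha_{\gamma\inv}(c(r(\gamma)))\big)\xi(s(\gamma))$, so that $\Ind{}{}\pi(\linner{E}{a}{b})h=T_a(T_b^{*}h)$. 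The operator $T_a$ need not be bounded (already for groups it is not), so all of this is carried out on the dense domain of compactly supported sections, where the integrals visibly converge; there
\[\langle\Ind{}{}\pi(\linner{E}{a}{a})h,h\rangle=\|T_a^{*}h\|^2\geq 0,\]
and since $\Ind{}{}\pi(\linner{E}{a}{a})$ is bounded and self-adjoint it follows that it is positive. For definiteness, $\linner{E}{a}{a}=0$ forces $T_a^{*}h=0$ for every $C_0(\units)$-linear $\pi$ and every compactly supported $h$; choosing $\pi$ faithful and localizing $h$ near $\zeta\inv$ then yields $\alpha_{\zeta}(a(s(\zeta))^*)=0$ for all $\zeta$, hence $a=0$.

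I expect the positivity step to be the main obstacle. The delicate parts are keeping track of the fibres $A(s(\gamma))$ and $A(r(\gamma))$ through the $\alpha$-equivariance and the change of variables, checking that $T_c^{*}$ really is the formal adjoint of $T_c$ on the chosen domain, and passing from positivity on the dense domain to positivity of the bounded operator $\Ind{}{}\pi(\linner{E}{a}{a})$. The module and inner-product identities, by contrast, are routine once written out, the only care needed being the $I$-norm estimates that legitimize the Fubini interchanges.
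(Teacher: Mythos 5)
Your proposal is correct and follows essentially the same route as the paper: the algebraic identities are handled by the same direct computations (your observation that $\linner{E}{a}{b}\cdot c=a\cdot\rinner{b}{c}{D}\in A_0$ via condition (2) just makes explicit a point the paper leaves implicit), and your factorization $\Ind{}{}\pi(\linner{E}{a}{a})=T_aT_a^*$ is exactly the content of the paper's Lemma \ref{lem ind inner}, which writes $\langle\Ind{}{}\pi(\linner{E}{a}{a})h,h\rangle$ as $\int_{\units}\|(T_a^*h)(u)\|^2\,d\mu(u)$ by the same change of variables $\eta\mapsto\gamma\eta$ and decomposition of $\nu$. The only cosmetic difference is that you phrase the square in operator language and add a (not strictly necessary, and somewhat sketched) definiteness argument that the paper omits.
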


\begin{proof}First note that $\|\gamma\mapsto f(\gamma)\alpha_{\gamma}(c(s(\gamma)))\|\leq \|f\|_I\|c\|$ so that the action is bounded.  The linear and adjoint relations are routine. To show $E_0$ is a subalgebra of $A\rtimes_{\alpha,r} G$ and the action of $E_0$ to commutes with the inner product,  we perform the following computation. For $a,b,c,d\in A_0$,
\begin{align*} &\linner{E}{a}{b}*\linner{E}{c}{d}(\gamma)=\int_G \linner{E}{a}{b}(\eta)\alpha_{\eta}\left(\linner{E}{c}{d}(\eta\inv\gamma)\right)d\haarv{}{r(\gamma)}(\eta)\\
&=\int_G \linner{E}{a}{b}(\eta)\alpha_{\eta}\left(c(s(\eta))\alpha_{\eta\inv\gamma}(d(s(\gamma))^*)\right)d\haarv{}{r(\gamma)}(\eta)\\
&=\Bigl(\int_G \linner{E}{a}{b}(\eta)\alpha_{\eta}(c(s(\eta)))d\haarv{}{r(\gamma)}(\eta)\Bigr)\alpha_{\gamma}(d(s(\gamma))^*)=\linner{E}{\linner{E}{a}{b}\cdot c}{d}(\gamma).
\end{align*}

It remains to show that the inner product is positive, for this we use the following lemma.
\begin{lem}\label{lem ind inner}Suppose $(\A, G, \alpha)$ is a groupoid dynamical system and $\pi$ is a $C_0(\units)$-linear representation of $A=\sect{\units}{\A}$ so that we can decompose $\pi$ as in \eqref{eq dir int}. If $a\in A_0$ and $h\in L^2(G*_s\Hi,\nu\inv)$ then
\begin{align}\langle&\ind \pi(\linner{E}{a}{a})h,h\rangle\nonumber\\
&=\int_{\units}\Bigl\langle\int_G\pi_{u}\bigl(\alpha_{\eta\inv}\inv(a(s(\eta))^*)\bigr)h(\eta\inv)d\haarv{}{u}(\eta),\int_G\pi_{u}\bigl(\alpha_{\gamma\inv}\inv\left(a(s(\gamma)^*)\right)\bigr)\Bigr.\nonumber\\
&\hspace{2.5in}\Bigl.h(\gamma\inv)d\haarv{}{u}(\gamma)\Bigr\rangle_{\mathcal{H}(u)}(\gamma)d\mu(u)\label{eq ind inner}\end{align}
\end{lem}
\begin{proof} We now compute:
\begin{align*}&\left\langle\Ind{}{}\pi\left(\linner{E}{a}{a}\right)h,h\right\rangle =\int_G \rinner{\Ind{}{}\pi\left(\linner{E}{a}{a}\right)h(\gamma)}{h(\gamma)}{\mathcal{H}(s(\gamma))}d\nu\inv(\gamma)\\
&=\int_G\int_G\left\langle\pi_{s(\gamma)}\left(\alpha_{\gamma}\inv\bigl(a(r(\eta))\alpha_{\eta}(a(s(\eta))^*)\bigr)\right)h(\eta\inv\gamma)\right.,\\
&\hspace{2.5 in}\bigl.h(\gamma)\bigr\rangle_{\mathcal{H}(s(\gamma))}d\haarv{}{r(\gamma)}(\eta)d\nu\inv(\gamma)\\
&=\int_G\int_G\left\langle\pi_{s(\gamma)}\bigl(\alpha_{\eta\inv\gamma}\inv(a(s(\eta))^*)\bigr)h(\eta\inv\gamma),\pi_{s(\gamma)}\left(\alpha_{\gamma}\inv\left(a(r(\gamma)^*)\right)\right)\right.\\
&\hspace{2.5in}\Bigl.h(\gamma)\Bigr\rangle_{\mathcal{H}(s(\gamma))}d\haarv{}{r(\gamma)}(\eta)d\nu\inv(\gamma).
\end{align*}

Now using the left invariance of the Haar system to replace $\eta$ with $\gamma\eta$ the above becomes 
\begin{align*}&=\int_G\Bigl\langle\int_G\pi_{s(\gamma)}\bigl(\alpha_{\eta\inv}\inv(a(s(\eta))^*)\bigr)h(\eta\inv)d\haarv{}{s(\gamma)}(\eta),\pi_{s(\gamma)}\left(\alpha_{\gamma}\inv\left(a(r(\gamma)^*)\right)\right)\Bigr.\\
&\hspace{2.5in}\Bigl.h(\gamma)\Bigr\rangle_{\mathcal{H}(s(\gamma))}d\nu\inv(\gamma).\end{align*}
But now $s(\gamma)=r(\eta)$ so the above becomes
\begin{align*}&=\int_G\Bigl\langle\int_G\pi_{r(\eta)}\bigl(\alpha_{\eta\inv}\inv(a(s(\eta))^*)\bigr)h(\eta\inv)d\haarv{}{s(\gamma)}(\eta),\pi_{s(\gamma)}\left(\alpha_{\gamma}\inv\left(a(r(\gamma)^*)\right)\right)\Bigr.\\
&\hspace{2.5in}\Bigl.h(\gamma)\Bigr\rangle_{\mathcal{H}(s(\gamma))}d\nu\inv(\gamma)\\
&=\int_G\Bigl\langle\int_G\pi_{r(\eta)}\bigl(\alpha_{\eta\inv}\inv(a(s(\eta))^*)\bigr)h(\eta\inv)d\haarv{}{r(\gamma)}(\eta),\pi_{r(\gamma)}\bigl(\alpha_{\gamma\inv}\inv\left(a(s(\gamma)^*)\right)\bigr)\Bigr.\\
&\hspace{2.5in}\Bigl.h(\gamma\inv)\Bigr\rangle_{\mathcal{H}(r(\gamma))}d\nu(\gamma).\end{align*}
By decomposing $\nu$ and noticing $r(\eta)=r(\gamma)=u$  the above is equal to 

\begin{align*}&=\int_{\units}\Bigl\langle\int_G\pi_{u}\bigl(\alpha_{\eta\inv}\inv(a(s(\eta))^*)\bigr)h(\eta\inv)d\haarv{}{u}(\eta),\int_G\pi_{u}\bigl(\alpha_{\gamma\inv}\inv\left(a(s(\gamma)^*)\right)\bigr)\Bigr.\\
&\hspace{2.5in}\Bigl.h(\gamma\inv)d\haarv{}{u}(\gamma)\Bigr\rangle_{\mathcal{H}(u)}(\gamma)d\mu(u).\qedhere\end{align*}
\end{proof}
Now Lemma \ref{lem ind inner} gives 
$\ind\pi(\linner{E}{a}{a})$ is positive since $\mu$ is a positive measure. This holds for all induced representations,  so $\linner{E}{a}{a}$ is positive as an element of $\A\rtimes_{\alpha,r}G$ and hence as an element of $E$, so that ${_{E_0}}\!A_0$ is a pre-Hilbert module and thus completes to a Hilbert $E$-module. \end{proof}

\end{subsection}
\begin{subsection}{Morita Equivalence}
\begin{thm}Let $(\A, G, \alpha)$ be a proper dynamical system with respect to $A_0$, and let $D_0=\spn\{\rinner{a}{b}{D}:a,b\in A_0\}$\index{$D_0$}.  Viewing $E_0$ as a dense subalgebra of  $E:=\overline{E_0}^{\A\rtimes_{\alpha,r}G}$, and $D_0$ as a dense subalgebra of  $A^{\alpha}:=\overline{D_0}^{M(A)}$, then $A_0$ equipped with the $E_0$-action defined in equation \eqref{eq li act} and inner products defined in Definition \ref{proper dyn sys} is a $E_0-D_0$ pre-imprimitivity bimodule.\label{thm proper morita}\end{thm}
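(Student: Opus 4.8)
The plan is to verify the axioms of an $E_0$--$D_0$ pre-imprimitivity bimodule one at a time, using the identities already established. By Lemma \ref{lem hil mod} we already know that ${}_{E_0}A_0$ is a pre-Hilbert $E_0$-module (left action bounded, linear/adjoint relations, $E_0$ a $*$-subalgebra of $\A\rtimes_{\alpha,r}G$ with $\linner{E}{a}{b}\cdot c=\linner{E}{\linner{E}{a}{b}\cdot c}{d}$ absorbed correctly, and positivity of $\linner{E}{a}{a}$ via the direct-integral computation of Lemma \ref{lem ind inner}). So the first block of work is the symmetric right-hand story: I would define the right $D_0$-action on $A_0$ as multiplication in $M(A)$ (this lands back in $A_0$ precisely because $\rinner{a}{b}{D}\in M(A_0)^{\alpha}$, whose defining property includes $A_0 d\subset A_0$), check $D_0$ is a $*$-subalgebra of $M(A)$ closed under the operations, and show $\rinner{\cdot}{\cdot}{D}$ is $D_0$-valued, conjugate-symmetric, right-linear, and positive. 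Positivity on the right is the mirror of the left computation: $\rinner{a}{a}{D}$ is characterised by $c\cdot\rinner{a}{a}{D}(u)=\int_G c(r(\gamma))\alpha_\gamma(a^*a(s(\gamma)))\,d\haarv{}{u}(\gamma)$, and one shows this is a positive element of $M(A)$ by pairing against a state/representation and recognising the integrand $\alpha_\gamma(a^*a(s(\gamma)))=\alpha_\gamma(a(s(\gamma))^*a(s(\gamma)))$ is pointwise positive, so the integral against the (positive) Haar measure is positive; uniqueness of $\rinner{a}{b}{D}$ is given in Definition \ref{proper dyn sys}.

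The heart of the proof is the compatibility (associativity) identity linking the two inner products:
\begin{equation*}
\linner{E}{a}{b}\cdot c \;=\; a\cdot \rinner{b}{c}{D}\qquad\text{for all }a,b,c\in A_0.
\end{equation*}
I would verify this fibrewise over $u\in\units$. The left side, by \eqref{eq li act}, is
\begin{equation*}
(\linner{E}{a}{b}\cdot c)(u)=\int_G \linner{E}{a}{b}(\gamma)\,\alpha_\gamma(c(s(\gamma)))\,d\haarv{}{u}(\gamma)
=\int_G a(r(\gamma))\,\alpha_\gamma\!\bigl(b(s(\gamma))^*\bigr)\,\alpha_\gamma(c(s(\gamma)))\,d\haarv{}{u}(\gamma),
\end{equation*}
and pulling $a(u)=a(r(\gamma))$ out of the integral (it is constant in $\gamma$ over the fibre $G^u$, and acts as a multiplier on $A(u)$) this equals
\begin{equation*}
a(u)\int_G \alpha_\gamma\!\bigl(b(s(\gamma))^*c(s(\gamma))\bigr)\,d\haarv{}{u}(\gamma)
= a(u)\cdot(b^*c)\text{-type integral} = \bigl(a\cdot\rinner{b}{c}{D}\bigr)(u),
\end{equation*}
where the last step is exactly the defining formula for $\rinner{b}{c}{D}$ applied with the test element being $a$ (more precisely, one uses the formula $(d\cdot\rinner{b}{c}{D})(u)=\int_G d(r(\gamma))\alpha_\gamma(b^*c(s(\gamma)))\,d\haarv{}{u}(\gamma)$ for $d\in A_0$ and then extends to the multiplier $a$ by nondegeneracy/continuity). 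The remaining pre-imprimitivity axioms are then essentially bookkeeping: that the left and right actions commute ($(\linner{E}{a}{b}\cdot c)\cdot\rinner{d}{e}{D}=\linner{E}{a}{b}\cdot(c\cdot\rinner{d}{e}{D})$ follows since the right action is $M(A)$-multiplication and the left action \eqref{eq li act} is $M(A)$-linear on the right slot); that $E_0\cdot A_0$ is dense in $A_0$ and $A_0\cdot D_0$ is dense in $A_0$ (the first from an approximate-identity argument for $E$ acting on $A_0$, using $\linner{E}{a}{b}\cdot c = a\cdot\rinner{b}{c}{D}$ and density of $A_0$; the second because $D_0$ contains enough elements $\rinner{b}{c}{D}$ to approximate a unit for the $M(A_0)^\alpha$-action on $A_0$); and the $D_0$-valued inner product has dense range in the sense needed, i.e.\ $\spn\rinner{A_0}{A_0}{D}=D_0$ by definition.

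The main obstacle I anticipate is the justification of moving the multiplier $a(r(\gamma))=a(u)$ out of the integral and identifying the result with $a\cdot\rinner{b}{c}{D}$ cleanly — one is integrating a section of $r^*\A$ against a Haar measure with values in (the multiplier algebra of) a single fibre $A(u)$, and the defining identity in Definition \ref{proper dyn sys}\eqref{def prop 2} is phrased for $c\in A_0$ acting on the \emph{left} of $\rinner{a}{b}{D}$, whereas here I need it with a multiplier and in a form that matches \eqref{eq li act}. Getting the variances ($a^*b$ vs.\ $ab^*$, left vs.\ right inner products) and the order of multiplication exactly right, and confirming that the two inner-product positivity arguments are genuinely independent (the left one went through $\Ind\pi$ of the regular representation, the right one is a direct fibrewise state argument), is where the care is needed; the rest reduces to the linear-algebra identities already rehearsed in Lemma \ref{lem hil mod} and its sublemma. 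Once the compatibility identity and the two positivity statements are in hand, the Rieffel axioms for a pre-imprimitivity bimodule are satisfied and the theorem follows.
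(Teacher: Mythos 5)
Your algebraic groundwork is fine: the compatibility identity $\linner{E}{a}{b}\cdot c=a\cdot\rinner{b}{c}{D}$ does follow fibrewise exactly as you compute (pull $a(u)=a(r(\gamma))$ out of the integral over $G^u$ and invoke the defining formula in Definition \ref{proper dyn sys}\eqref{def prop 2}), and the paper itself dispatches these "algebraic conditions" in one sentence. But there is a genuine gap: you never prove that the $D_0$-action is \emph{bounded} for the $E$-valued inner product, i.e.\ that $\linner{E}{a\cdot d}{a\cdot d}\leq\|d\|_{M(A)}^{2}\,\linner{E}{a}{a}$ in $E$, and you never address the stronger fact that the norm of $d\in M(A_0)^{\alpha}$ as an operator on $_{E}\overline{A_0}$ actually \emph{equals} $\|d\|_{M(A)}$. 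Neither is bookkeeping, and together they constitute essentially the entire proof in the paper. The boundedness is proved by feeding $ad$ into Lemma \ref{lem ind inner}: the resulting expression is an integral over $\units$ of vector inner products in which $\bar{\pi}_u(d(u)^*)$ factors out of both slots precisely because $d$ is $\alpha$-invariant ($d(s(\gamma))=d(r(\gamma))=d(u)$ on $G^u$), after which one dominates by $\|d\|^2_{M(A)}$ using positivity of $\mu$. Your proposed "direct fibrewise" positivity argument for $\rinner{a}{a}{D}$ is fine, but it does not substitute for this estimate, which lives on the crossed-product side.

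The norm \emph{equality} is the reason the theorem identifies the imprimitivity algebra with $A^{\alpha}=\overline{D_0}^{M(A)}$ rather than with some quotient of it: if one only had $\|d\|_{\mathcal{L}(_{E}\overline{A_0})}\leq\|d\|_{M(A)}$, the completion of $D_0$ in the imprimitivity-algebra norm could fail to be $A^{\alpha}$. Establishing it requires Rieffel's construction, which the paper carries out in detail: choose a faithful direct-integral representation $\pi$, a compactly supported $v$ with $\|\bar{\pi}(d)v\|$ near $\|d\|$, an $a_0$ near an approximate identity, a symmetric $r,s$-relatively compact neighborhood $V_{\epsilon}$ of $\units$ on which $\|\alpha_{\gamma}(a_0^*(s(\gamma)))-a_0^*(r(\gamma))\|$ is small (Lemma \ref{lem act close}), and the vector $h(\gamma)=\chi_{V_{\epsilon}}(\gamma)v(s(\gamma))/(k\haarv{s(\gamma)}{}(V_{\epsilon}))$, for which one must separately verify $h\in L^2(G*_s\Hi,\nu\inv)$ via a Haar-system continuity argument. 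One then estimates $\rinner{\Ind{}{}\pi(\linner{E}{ad}{ad})h}{h}{}$ against $\|\pi((a_0d)^*)v\|^2$ through the operators $L(u)=\int_{V_{\epsilon}}\pi_u(\alpha_{\gamma\inv}(a_0(r(\gamma))^*))\,d\haarv{u}{}(\gamma)$. None of this is foreshadowed in your outline, so as written the proposal proves at most that $A_0$ is a bimodule with compatible inner products, not that it is an $E_0$--$D_0$ pre-imprimitivity bimodule in the sense required for the Morita equivalence with the generalized fixed point algebra.
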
  

\begin{rmk} We call $A^{\alpha}$\index{$A^{\alpha}$} the generalized fixed point algebra \index{generalized fixed point algebra} for the dynamical system $(\A, G, \alpha)$.  So that Theorem \ref{thm proper morita} gives that the generalized fixed point algebra  is Morita equivalent to a subalgebra of the reduced crossed product.\end{rmk}

\begin{proof}[Proof of Theorem \ref{thm proper morita}] The proof of this theorem follows \cite[Section 1]{Rieffel90} fairly closely.  From Lemma \ref{lem hil mod}, $A_0$ is a pre-Hilbert $E_0$-module. The goal is to show that $A^{\alpha}$ is the imprimitivity algebra  for the resulting Hilbert module. From the definition of the $D_0$-valued inner product and the definition of the $E_0$-action it is easy to see that  $D_0$ satisfies the algebraic conditions.  

It remains to show that the $D_0$-action is bounded and adjointable so that $D_0\subset \mathcal{L}(_{E}\!\overline{A_0})$  and furthermore, that in fact the norm of $d\in D_0$ as an element of $\mathcal{L}(_{E}\!\overline{A_0})$ coincides with its norm as an element of $M(A)$.  The last statement ensures that $A_0$ completes to an $E-A^{\alpha}$-imprimitivity bimodule.  

First, we show that the action of $M(A_0)^{\alpha}$ on $A_0$ is bounded.  Let $\pi$ be a $C_0(\units)$-linear representation of $A$, and $\Ind{}{}\pi$ be the  corresponding  representation of the reduced crossed product.  Using $ad$ in Lemma \ref{lem ind inner} we get 
\begin{align}\langle\ind \pi&(\linner{E}{ad}{ad}h),h\rangle\nonumber\\
&=\int_{\units}\Bigl\langle\int_G\pi_{u}\left(\alpha_{\eta\inv}\inv((ad(s(\eta)))^*)\right)h(\eta\inv)d\haarv{}{u}(\eta),\Bigr.\nonumber\\
&\hspace{.5in}\Bigl.\int_G\pi_{u}\left(\alpha_{\gamma\inv}\inv\left((ad(s(\gamma))^*)\right)\right)h(\gamma\inv)d\haarv{}{u}(\gamma)\Bigr\rangle_{\mathscr{H}(u)}d\mu(u)\label{eq ind ad inner}\end{align}
Using the fact that $r(\gamma)=r(\eta)=u$ and $d(s(\gamma))=d(r(\gamma))$ \eqref{eq ind ad inner} is equal to 
\begin{align*}&=\int_{\units}\Bigl\langle\int_G\bar{\pi}_{u}(d(u)^*)\pi_{r(\eta)}\bigl(\alpha_{\eta\inv}\inv \bigl(a(s(\eta))^*\bigr)\bigr)h(\eta\inv)d\haarv{}{u}(\eta),\Bigr.\\
&\hspace{.7 in}\Bigl. \int_G\bar{\pi}_{u}(d(u)^*)\pi_{r(\gamma)}\bigl(\alpha_{\gamma\inv}\inv\bigl(a(s(\gamma))^*\bigr)\bigr)h(\gamma\inv)d\haarv{}{u}(\gamma)\Bigr\rangle_{{}_{\mathcal{H}(u)}}d\mu(u)\\
&\leq\int_{\units}\Bigl\langle\|d\|_{M(A)}^2\int_G\pi_{r(\eta)}\bigl(\alpha_{\eta\inv}\inv (a(s(\eta))^*)\bigr)h(\eta\inv)d\haarv{}{u}(\eta),\Bigr.\\
&\hspace{.7 in}\Bigl. \int_G\pi_{r(\gamma)}(\alpha_{\gamma\inv}\inv(a(s(\gamma))^*))h(\gamma\inv)d\haarv{}{u}(\gamma)\Bigr\rangle_{{}_{\mathcal{H}(u)}}d\mu(u)\\
&=\|d\|_{M(A)}^2\linner{}{\linner{E}{a}{a}h}{h}.
\end{align*}
Here the last inequality follows from the fact that $\mu$ is a positive measure and the integrand is positive. Since this holds for all induced representations of $\A\rtimes_{\alpha,r}G$ we have $\|ad\|_{A_0}\leq\|a\|_{A_0}\|d\|_{M(A)}$. 

It is not hard to see that $d^*$ is the adjoint for $d$ as an element of $L({_{_{E}}}\!\overline{A_0})$, so that $d$ extends to an adjointable operator on $_E\!\overline{A_0}$.

It remains to show that the norm of $d$ as an element of $M(A)$ is the same as the norm of $d$ as an element of $\mathcal{L}(_E\!\overline{A_0})$.  We do this by finding a representation $\pi$ of $A$ and  constructing an  $a\in A_0$ and $h\in L^2(G*_s\mathfrak{H})$ such that $\rinner{\ind \pi(\linner{E}{ad}{ad})h}{h}{}$ is close to $\|d\|^2$ and $\rinner{\ind\pi(\linner{E}{a}{a})h}{h}{}$ is close to $1$.  It follows that 
$\rinner{\ind\pi(\linner{E}{ad}{ad}h)}{h}{}$  is close to $\|d\|^2\rinner{\ind\pi(\linner{E}{a}{a})h}{h}{}$.  

If $\pi:A\rightarrow L^2(\units*\Hi)$ is a faithful representation of $A$ the idea due to Rieffel \cite[p.~151]{Rieffel90} is the following.  We first pick $v\in L^2(\units*\Hi)$ such that $\bar{\pi}(d)v$ is close to $\|d\|$, and find an $a\in A_0$ such that $\pi(a)v$ is close to $v$.  We then let $h$ be a vector in $L^2(G*_s\Hi)$ that extends $v$ and is supported on a small neighborhood of $\units$.  The calculation used to show $d$ is bounded will then also show that $\rinner{\ind\pi(\linner{E}{ad}{ad})h}{h}{}$ is close to $\|d\|^2$ and $\rinner{\ind\pi(\linner{E}{a}{a})h}{h}{}$ is close to $1$.  We are left with checking the technical details.  

Let $d\in M(A_0)^{\alpha}$ be given and let $\epsilon>0$ be small.  Suppose that $\pi$ is a faithful nondegenerate representation of $A$.    Then $\bar{\pi}$ is a faithful nondegenerate representation of $M(A)$.  Thus there exists $v\in \mathcal{H}_{\pi}$ such that \begin{equation}\label{eq dv sq}\|\bar{\pi}(d)v\|^2+\epsilon/6>\|d\|_{M(A)}^2.\end{equation}  Now  there exists a Borel Hilbert bundle $\units*\mathfrak{H}$ and a measure $\mu$ on $\units$ such that $\pi\cong\int_{\units}^{\oplus}\pi_ud\mu(u)$.  We identify $\pi$ with its direct integral and $\mathcal{H}_{\pi}$ with $L^2(\units*\mathfrak{H})$ and a nasty computation now shows we can assume that $v$ under this identification has compact support $K_v$.

Now pick $a_0$ close to an approximate unit of $A$ such that 
\begin{align}\|\pi((a_0d)^*)v\|^2+\epsilon/6&>\|\bar{\pi}(d^*)v\|^2,\label{eq a0dv sq}\\
\|\pi(a_0)v\|^2+\epsilon/(6\|d\|^2)&>\|v\|^2=1, \text{~and}\label{eq av close to v}\\
\|a_0\|&<1.\label{eq a close to 1}\end{align}  

We will use a constant multiple of $a_0$ as our $a$.  Before we state the next lemma we need a definition. A subset $L$ of a topological groupoid $G$ is called \emph{$s$-relatively compact}, if $L\cap s\inv(K)$ is relatively compact for every compact subset $K\subset\units$.  $r$-relatively compact subsets are defined similarly. A  compactness argument shows the next lemma which we will use to find an appropriate small neighborhood of $\units$.

\begin{lem}\label{lem app act} Let $(\A,G, \alpha)$ be a groupoid dynamical system, and $a\in \sect{\units}{\A}$.  Fix $\epsilon>0$, then there exists an open neighborhood $V$ of $\units$ in $G$ such that $V$ is both $r$- and $s$-relatively compact and for all $\gamma\in G$, $\|\alpha_{\gamma}(a(s(\gamma)))-a(r(\gamma))\|<\epsilon$.\label{lem act close}\end{lem}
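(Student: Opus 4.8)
The plan is to exhibit the function $\phi(\gamma):=\|\alpha_{\gamma}(a(s(\gamma)))-a(r(\gamma))\|$ as an upper semicontinuous function on $G$ that vanishes on $\units$, so that $\{\phi<\epsilon\}$ is an open neighborhood of $\units$, and then to intersect this with a fixed open neighborhood of $\units$ that happens to be both $r$- and $s$-relatively compact.

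First note that $\alpha_{u}=\id_{A(u)}$ for every $u\in\units$: by condition \eqref{dyn sys alg} we have $\alpha_{u}=\alpha_{u\cdot u}=\alpha_{u}\circ\alpha_{u}$, and since $\alpha_{u}$ is a bijection this forces $\alpha_{u}=\id$; hence $\phi(u)=\|\alpha_{u}(a(u))-a(u)\|=0$. For continuity, the map $\gamma\mapsto(\gamma,a(s(\gamma)))$ is continuous from $G$ into $G*\A$ (it is the section $a$ composed with $s$), so by condition \eqref{dyn sys cts} the map $\gamma\mapsto\alpha_{\gamma}(a(s(\gamma)))$ is continuous from $G$ to $\A$; the map $\gamma\mapsto a(r(\gamma))$ is likewise continuous, and both take values in the fibre $A(r(\gamma))$, so by axioms B2 and B3 the difference $\gamma\mapsto\alpha_{\gamma}(a(s(\gamma)))-a(r(\gamma))$ is a continuous section of $r^{*}\A$. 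Applying axiom B1 shows $\phi$ is upper semicontinuous on $G$, so $W:=\{\gamma\in G:\phi(\gamma)<\epsilon\}$ is open and contains $\units$.

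It remains to find an open neighborhood $V_{1}$ of $\units$ in $G$ that is both $r$- and $s$-relatively compact; then $V:=W\cap V_{1}$ is open, contains $\units$, lies in $W$ (so $\phi<\epsilon$ on $V$), and lies in $V_{1}$, hence is $r$- and $s$-relatively compact, since any subset of an $r$- (resp. $s$-) relatively compact set is again such. To construct $V_{1}$: because $\units$ is second countable, locally compact and Hausdorff it is paracompact, so it admits a locally finite open cover $\{O_{n}\}$ with each $\overline{O_{n}}$ compact. Put $O_{n}^{*}:=\bigcup\{O_{k}:O_{k}\cap O_{n}\neq\emptyset\}$; local finiteness and compactness of $\overline{O_{n}}$ make this a finite union, so $\overline{O_{n}^{*}}$ is compact, and one checks $\overline{O_{n}}\subseteq r^{-1}(O_{n}^{*})\cap s^{-1}(O_{n}^{*})$. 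Using local compactness of $G$, choose relatively compact open $P_{n}$ with $\overline{O_{n}}\subseteq P_{n}\subseteq\overline{P_{n}}\subseteq r^{-1}(O_{n}^{*})\cap s^{-1}(O_{n}^{*})$, and set $V_{1}:=\bigcup_{n}P_{n}$, an open set containing $\bigcup_{n}O_{n}=\units$. Given a compact $K\subseteq\units$, local finiteness together with compactness shows only finitely many $O_{k}$ meet $K$, hence only finitely many $n$ satisfy $O_{n}^{*}\cap K\neq\emptyset$; since $P_{n}\cap s^{-1}(K)\neq\emptyset$ implies $\emptyset\neq s(P_{n})\cap K\subseteq O_{n}^{*}\cap K$, the set $V_{1}\cap s^{-1}(K)$ is a finite union of relatively compact sets, hence relatively compact. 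The same argument with $r$ in place of $s$ completes the construction.

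The content of the lemma is concentrated in the construction of $V_{1}$ — this is the ``compactness argument'' alluded to in the statement — and its one genuine subtlety is that $s^{-1}(K)$ need not be relatively compact in a groupoid (e.g. in a transformation groupoid by a noncompact group), so one cannot simply enlarge a relatively compact neighborhood of the compact part of $\units$ where $a$ is large; instead one must use paracompactness of $\units$ to keep the local pieces $P_{n}$ locally finite over the base. Everything else — that $\alpha_{u}=\id$, that $\phi$ is upper semicontinuous, and that restricting to $W$ does no harm — is routine.
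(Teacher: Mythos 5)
Your proof is correct, and in fact it fills a genuine gap: the paper offers no proof of this lemma at all, merely asserting that ``a compactness argument shows'' it. Your two ingredients --- upper semicontinuity of $\gamma\mapsto\|\alpha_{\gamma}(a(s(\gamma)))-a(r(\gamma))\|$ via B1--B3 and condition \eqref{dyn sys cts} (together with $\alpha_u=\id_{A(u)}$, which you correctly derive from \eqref{dyn sys alg}), and the paracompactness construction of an $r$- and $s$-relatively compact open neighborhood $V_1$ of $\units$ --- are both sound, and the latter is exactly the nontrivial point, since, as you observe, $s^{-1}(K)$ itself need not be relatively compact. One remark: the statement as printed says ``for all $\gamma\in G$,'' which is clearly a typo for ``for all $\gamma\in V$'' (an estimate valid on all of $G$ would be false in general and is not what is used later, cf.\ the application at \eqref{eq act est}); you silently and correctly prove the intended version. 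A small stylistic point is that second countability already gives $\sigma$-compactness of $\units$, so one could take the locally finite relatively compact cover $\{O_n\}$ directly from an exhaustion by compacts rather than invoking paracompactness, but this changes nothing essential.
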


Using Lemma \ref{lem act close} pick a symmetric $r,s$-relatively compact open neighborhood $V_{\epsilon}$ of $\units$ such that for all $\gamma\in V_{\epsilon}$
\begin{equation}\|\alpha_{\gamma}(a_0^*(s(\gamma)))-a_0^*(r(\gamma))\|<\epsilon/(12\|d\|^2).\label{eq act est}\end{equation}
Since $V_{\epsilon}$ is $s$-relatively compact,  $s\inv(u)\cap V_{\epsilon}$ is relatively compact.  Hence, 
$$\haarv{u}{}(V_{\epsilon})=\haarv{u}{}(s\inv(u)\cap V_{\epsilon})\leq \haarv{u}{}\left(\overline{s\inv(u)\cap V_{\epsilon}}\right)<\infty.$$
Furthermore, since $V_{\epsilon}$ is open and $u\in V_{\epsilon}$, we have $\lambda_u(V_{\epsilon})\neq 0$. Thus
\begin{equation}\label{G vect} \tilde{h}:=\frac{\chi_{{}_{V_{\epsilon}}}(\gamma)v(s(\gamma))}{\haarv{s(\gamma)}{}(V_{\epsilon})}\end{equation}
is defined and less than infinity for all $\gamma\in G$.  
\begin{claim}\label{clm til h} $\tilde{h}\in L^2(G*_s\mathfrak{H}, \nu\inv)$.\end{claim}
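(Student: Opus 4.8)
The plan is to show that $\tilde h$ is square-integrable with respect to $\nu\inv$ by explicitly estimating the $L^2$-norm and using that $v$ has compact support. First I would recall that an element $h$ of $L^2(G*_s\Hi,\nu\inv)$ must have $h(\gamma)\in\mathcal H(s(\gamma))$, which is satisfied here since $v(s(\gamma))\in\mathcal H(s(\gamma))$ and the scalar $\chi_{V_\epsilon}(\gamma)/\haarv{s(\gamma)}{}(V_\epsilon)$ only rescales within that fibre. Next I would address measurability: $\gamma\mapsto v(s(\gamma))$ is the pullback of the Borel section $v$ along the continuous map $s$, and $\chi_{V_\epsilon}$ is Borel since $V_\epsilon$ is open, while $\gamma\mapsto\haarv{s(\gamma)}{}(V_\epsilon)$ is a (lower semi-)continuous, hence Borel, function of $s(\gamma)$ that was shown in the run-up to the claim to be finite and nonzero for every $\gamma$; so $\tilde h$ is a Borel section of $G*_s\Hi$.

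The core computation is the norm estimate. Using the disintegration $\nu\inv=\int_{\units}\haarv{u}{}\,d\mu(u)$ and the fact that $s(\gamma)=u$ on the fibre $\lambda_u$, I would write
\begin{align*}
\|\tilde h\|_{\nu\inv}^2
&=\int_{\units}\int_G\frac{\chi_{V_\epsilon}(\gamma)\,\|v(s(\gamma))\|^2_{\mathcal H(s(\gamma))}}{\haarv{s(\gamma)}{}(V_\epsilon)^2}\,d\haarv{u}{}(\gamma)\,d\mu(u)\\
&=\int_{\units}\frac{\|v(u)\|^2_{\mathcal H(u)}}{\haarv{u}{}(V_\epsilon)^2}\,\haarv{u}{}(s\inv(u)\cap V_\epsilon)\,d\mu(u)
=\int_{\units}\frac{\|v(u)\|^2_{\mathcal H(u)}}{\haarv{u}{}(V_\epsilon)}\,d\mu(u),
\end{align*}
where I pulled the constant $\|v(u)\|^2/\haarv{u}{}(V_\epsilon)^2$ out of the inner integral and used $\haarv{u}{}(V_\epsilon)=\haarv{u}{}(s\inv(u)\cap V_\epsilon)$. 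Now I invoke that $v$ has compact support $K_v\subset\units$: the integrand vanishes off $K_v$, and on $K_v$ the function $u\mapsto\haarv{u}{}(V_\epsilon)$ is, by lower semicontinuity of $u\mapsto\haarv{u}{}(W)$ for open $W$, bounded below by some $\delta>0$ after possibly shrinking — more carefully, since $u\mapsto\haarv{u}{}(V_\epsilon)$ is lower semicontinuous and strictly positive on the compact set $K_v$, it attains a positive minimum $\delta$ there. Hence
\[
\|\tilde h\|_{\nu\inv}^2\le \frac1\delta\int_{K_v}\|v(u)\|^2_{\mathcal H(u)}\,d\mu(u)=\frac{\|v\|^2}{\delta}<\infty,
\]
so $\tilde h\in L^2(G*_s\Hi,\nu\inv)$.

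The main obstacle is the lower bound $\haarv{u}{}(V_\epsilon)\ge\delta>0$ on $K_v$: one needs the map $u\mapsto\haarv{u}{}(V_\epsilon)$ to be lower semicontinuous (so that its infimum over the compact set $K_v$ is attained and, being a value of the function, is strictly positive because $V_\epsilon$ is open and contains $u$). Lower semicontinuity of $u\mapsto\haarv{u}{}(W)$ for $W$ open is a standard property of Haar systems; if one prefers to avoid it, an alternative is to replace $V_\epsilon$ by a smaller neighborhood of the form $V'=W'\cap V_\epsilon$ where $W'$ is chosen using continuity of $\int f\,d\haarv{u}{}$ for a fixed $f\in C_c(G)$ with $0\le f\le\chi_{V_\epsilon}$ and $f>0$ on a neighborhood of $\units$, giving a continuous positive lower bound $\int f\,d\haarv{u}{}$ on all of $\units$, a fortiori on $K_v$; this is the device already used implicitly when asserting $\lambda_u(V_\epsilon)\ne 0$. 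Everything else — measurability, the fibrewise membership, and Tonelli's theorem to justify the iterated integral of the nonnegative integrand — is routine.
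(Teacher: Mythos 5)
Your proof is correct and follows essentially the same route as the paper: reduce to bounding $u\mapsto \|v(u)\|^2/\haarv{u}{}(V_{\epsilon})$ on the compact support $K_v$, and obtain a positive lower bound for $\haarv{u}{}(V_{\epsilon})$ there. The paper implements this lower bound via your ``alternative'' device --- choosing $\psi\in C_c(G)$ with $0\le\psi\le\chi_{{}_{V_{\epsilon}}}$ and $\psi|_{K_v}\equiv 1$ and using continuity of $u\mapsto\int\psi\,d\haarv{u}{}$ --- rather than by quoting lower semicontinuity of $u\mapsto\haarv{u}{}(V_{\epsilon})$ directly, but these are interchangeable.
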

\begin{proof}  Now
\begin{align*}\|\tilde{h}\|_2^2&=\int_{\units}\int_G\Bigl\langle\frac{\chi_{{}_{V_{\epsilon}}}(\gamma)v(s(\gamma))}{\haarv{s(\gamma)}{}(V_{\epsilon})},\frac{\chi_{{}_{V_{\epsilon}}}(\gamma)v(s(\gamma))}{\haarv{s(\gamma)}{}(V_{\epsilon})}\Bigr\rangle_{{}_{\mathcal{H}(s(\gamma))}}d\haarv{u}{}(\gamma)d\mu(u)\\
&=\int_{\units}\frac{\chi_{{}_{K_v}}}{(\haarv{u}{}(V_{\epsilon}))}\rinner{v(u)}{v(u)}{\mathcal{H}(u)}d\mu(u).
\end{align*}
So to show that $\tilde{h}\in L^2(G*_s\Hi, \nu\inv)$ it suffices to show that $\frac{\chi_{{}_{K_v}}}{\haarv{u}{}(V_{\epsilon})}\in L^{\infty}(\units,\mu)$.

Pick $\psi\in C_c(G)$ such that $\psi|_{K_v}\equiv 1$, $0\leq \psi\leq 1$, and $\supp(\psi)\subset V_{\epsilon}$.  Then by the properties of the Haar system the function
$$\lambda(\psi):u\mapsto\int_G \psi d\haarv{u}{}$$
is continuous.  So $\lambda(\psi)|_{K_v}$ has a minimum $m$.   Since $\psi(u)=1$, if $u\in K_v$ then $\int\psi d\haarv{u}{}>0$ so that $m>0$.  But $\psi\leq\chi_{{}_{V_{\epsilon}}}$, so for $u\in K_v$,  we have $m\leq \lambda(\psi)(u)\leq \haarv{u}{}(V_{\epsilon})$.  Thus $\frac{\chi_{K_v}}{(\haarv{u}{}(V_{\epsilon}))}\in L^{\infty}(\units,\mu)$, giving $\tilde{h}\in L^2(G*_s\mathfrak{H}, \nu\inv)$. \end{proof} Define:

\begin{equation*}h=\frac{\tilde{h}}{k}=\frac{\chi_{{}_{V_{\epsilon}}}(\gamma)v(s(\gamma))}{k\haarv{s(\gamma)}{}(V_{\epsilon})}\quad\text{and}\quad  a=ka_0.\end{equation*}  
The next claim uses this $h$ and $a$ to get the estimates we need to complete the proof.
\begin{claim}For $a,~ d, ~\pi,, ~h, ~v,$ and $\epsilon$ chosen as above, \begin{align}&\bigl|\rinner{\Ind{}{}\pi(\linner{E}{ad}{ad})h}{h}{} -\rinner{\pi((a_0d)^*)v}{\pi((a_0d)^*)v}{\mathcal{H}_{\pi}} \bigr|<\epsilon/6 \text{~and}\label{1}\\
&\bigl|\rinner{\Ind{}{}\pi(\linner{E}{a}{a})h}{h}{} -\rinner{\pi((a_0)^*)v}{\pi((a_0)^*)v}{\mathcal{H}_{\pi}} \bigr|<\epsilon/(6\|d\|^2).\label{2}\end{align}\end{claim}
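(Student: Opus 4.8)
The plan is to substitute $ad$ and $a$ into the direct-integral identity of Lemma~\ref{lem ind inner} and then compare, fibre by fibre, the resulting averaged vectors with $\pi((a_0d)^*)v$ and $\pi(a_0^*)v$.

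The first reduction disposes of the scalar $k$: since $\linner{E}{\cdot}{\cdot}$ is linear in its first entry and conjugate-linear in its second, $\linner{E}{ad}{ad}=k^2\linner{E}{a_0d}{a_0d}$ while $h=\tilde h/k$ contributes a factor $k^{-2}$, so the $k$'s cancel and it suffices to estimate $\rinner{\Ind{}{}\pi(\linner{E}{a_0d}{a_0d})\tilde h}{\tilde h}{}$. Apply Lemma~\ref{lem ind inner} with $a$ replaced by $a_0d$, use $\alpha_{\eta\inv}\inv=\alpha_\eta$, and simplify $\tilde h(\eta\inv)$: the integration takes place where $r(\eta)=u$, so $s(\eta\inv)=u$ and, as $V_\epsilon$ is symmetric, $\tilde h(\eta\inv)=\chi_{V_\epsilon}(\eta)\,v(u)/\haarv{u}{}(V_\epsilon)$, while the symmetry of $V_\epsilon$ also yields $\haarv{u}{}(V_\epsilon)=\haarv{}{u}(V_\epsilon)$. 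One is then left with $\rinner{\Ind{}{}\pi(\linner{E}{a_0d}{a_0d})\tilde h}{\tilde h}{}=\int_{\units}\|x_u\|^2_{\mathcal{H}(u)}\,d\mu(u)$, where
\[
x_u:=\frac{1}{\haarv{}{u}(V_\epsilon)}\int_G\chi_{V_\epsilon}(\eta)\,\pi_u\bigl(\alpha_\eta((a_0d)(s(\eta))^*)\bigr)v(u)\,d\haarv{}{u}(\eta)
\]
is a normalised average over $V_\epsilon\cap r\inv(u)$.

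The heart of the matter is to show $x_u$ is uniformly close to $y_u:=\pi_u\bigl((a_0d)(u)^*\bigr)v(u)$. Factoring $(a_0d)(s(\eta))^*=d(s(\eta))^*a_0(s(\eta))^*$, using the exact $\alpha$-invariance $\bar\alpha_\eta(d(s(\eta)))=d(u)$ of $d$, and invoking \eqref{eq act est}, one obtains $\|\alpha_\eta((a_0d)(s(\eta))^*)-(a_0d)(u)^*\|\le\|d\|\cdot\epsilon/(12\|d\|^2)=\epsilon/(12\|d\|)$ for every $\eta\in V_\epsilon$; since the normalising constant is exactly the $\haarv{}{u}$-mass of the region being averaged, the average of the constant $y_u$ is $y_u$ itself, so $\|x_u-y_u\|\le(\epsilon/12\|d\|)\,\|v(u)\|$. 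Both $\|x_u\|$ and $\|y_u\|$ are $<\|d\|\,\|v(u)\|$ — for $y_u$ because $\|(a_0d)(u)\|\le\|a_0\|\,\|d\|<\|d\|$ by \eqref{eq a close to 1}, and for $x_u$ because it is a normalised average of vectors of the same form — so the elementary inequality $\bigl|\|x_u\|^2-\|y_u\|^2\bigr|\le\|x_u-y_u\|\,(\|x_u\|+\|y_u\|)$ gives $\bigl|\|x_u\|^2-\|y_u\|^2\bigr|<(\epsilon/6)\|v(u)\|^2$ for every $u$. Integrating against $\mu$, and recalling $\int_{\units}\|y_u\|^2\,d\mu(u)=\|\pi((a_0d)^*)v\|^2=\rinner{\pi((a_0d)^*)v}{\pi((a_0d)^*)v}{\mathcal{H}_{\pi}}$ and $\|v\|=1$, yields \eqref{1}. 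Running the identical argument with the multiplier $d$ erased — so that $\|a_0\|<1$ now supplies the norm bounds and \eqref{eq act est} is applied directly, giving $\|x_u-y_u\|<(\epsilon/12\|d\|^2)\|v(u)\|$ — yields \eqref{2}.

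Beyond the somewhat lengthy bookkeeping with fibres and the two Haar systems, the main obstacle is the treatment of the multiplier $d$: one cannot approximate $\alpha_\eta(d(s(\eta))^*)$ by anything, so the device is to split $d$ off, absorb it into the honest algebra element $a_0d\in A_0$ wherever an element of $A$ is actually needed, and exploit the \emph{exact} invariance of $d$ so that it contributes \emph{no} error of its own — the entire error being the approximate invariance of $a_0$ on the small neighbourhood $V_\epsilon$ recorded in \eqref{eq act est}.
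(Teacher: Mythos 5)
Your proof is correct and follows essentially the same route as the paper: apply Lemma \ref{lem ind inner}, use the symmetry of $V_\epsilon$ and the exact $\alpha$-invariance of $d$ to reduce everything to the estimate \eqref{eq act est} on $a_0$, and compare the normalised average over $V_\epsilon\cap r\inv(u)$ with the fibrewise value before integrating against $\mu$. The only cosmetic difference is that you run the comparison at the level of vectors $x_u,y_u$ with the inequality $|\|x\|^2-\|y\|^2|\le\|x-y\|(\|x\|+\|y\|)$, where the paper packages the same estimate as an operator bound on $L(u)=\int_{V_\epsilon}\pi_u(\alpha_{\gamma\inv}(a_0(r(\gamma))^*))\,d\haarv{u}{}(\gamma)$.
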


\begin{proof} We will compute the estimate for \eqref{1}, the computation for \eqref{2} is exactly the same.  First note that \begin{align}\rinner{\pi((a_0d)^*)v}{\pi((a_0d)^*)v}{\mathcal{H}_{\pi}}
&=\int_{\units}\Bigl\langle\pi_u((a_0d)(u))\haarv{u}{}(V_{\epsilon})\pi_u((a_0d)^*(u))\nonumber\Bigr.\\
&\Bigl.\hspace{.5 in}\haarv{u}{}(V_{\epsilon})\frac{v(u)}{\haarv{u}{}(V_{\epsilon})},\frac{v(u)}{\haarv{u}{}(V_{\epsilon})}\Bigr\rangle_{\mathcal{H}(u)}d\mu(u).\label{eq piadv}
\end{align}
Using Lemma \ref{lem ind inner} with $ad$ and $h$ we compute:
\begin{align*}\langle\ind&\pi(\linner{E}{ad}{ad})h,h\rangle\\
&=\int_{\units}\Bigl\langle\int_G\pi_{u}\left(\alpha_{\eta\inv}((ad(r(\eta)))^*)\right)h(\eta)d\haarv{u}{}(\eta),\Bigr.\\
&\hspace{.6in}\Bigl.\int_G\pi_{u}(\alpha_{\gamma\inv}((ad(r(\gamma))^*)))h(\gamma)d\haarv{u}{}(\gamma)\Bigr\rangle_{\mathcal{H}(u)}d\mu(u)\\
&=\int_{\units}\Bigl\langle\bar{\pi}_u(d^*(u))\int_{V_{\epsilon}}\pi_{u}\left(\alpha_{\eta\inv}(a_0(r(\eta))^*)\right)d\haarv{u}{}(\eta)\frac{v(u)}{\haarv{u}{}(V_{\epsilon})},\Bigr.\\
&\hspace{.6in}\Bigl.\bar{\pi}_u(d^*(u))\int_{V_{\epsilon}}\pi_{u}(\alpha_{\gamma\inv}(a_0(r(\gamma)^*)))d\haarv{u}{}(\gamma)\frac{v(u)}{\haarv{u}{}(V_{\epsilon})}\Bigr\rangle_{\mathcal{H}(u)}d\mu(u).
\end{align*}
Now $\pi_{u}(\alpha_{\gamma\inv}(a_0(r(\gamma)^*)))\in B(\mathcal{H}(u))$ for all $\gamma$, the map $$\gamma\mapsto \pi_{u}(\alpha_{\gamma\inv}(a_0(r(\gamma)^*)))$$ is continuous, and $\overline{s\inv(u)\cap V_{\epsilon}}$ is compact, so  there exists an operator $L(u)\in B(\mathcal{H}(u))$ such that such that \begin{align*}L(u)&=\int_{V_{\epsilon}}\pi_{u}(\alpha_{\gamma\inv}(a_0(r(\gamma)^*)))d\haarv{u}{}(\gamma),{~~\text{giving}}\\
\rinner{\linner{E}{ad}{ad}h}{h}{}&=\int_{\units}\rinner{L(u)^*\bar{\pi}_u(dd^*(u))L(u)\frac{v(u)}{\haarv{u}{}(V_{\epsilon})}}{\frac{v(u)}{\haarv{u}{}(V_{\epsilon})}}{}d\mu(u).\end{align*}
Thus from \eqref{eq piadv},
\begin{align}|\langle {{}_{_{E}}}\!\langle &ad,ad\rangle h,h\rangle - \rinner{\pi((a_0d)^*)v}{\pi((a_0d)^*)v}{}|\nonumber\\
&\leq\int_{\units}\|L(u)^*\bar{\pi}_u(d(u))\bar{\pi}_u(d^*(u))L(u)-\pi_u(a_0dd^*a_0^*(u))(\haarv{u}{}(V_{\epsilon}))^2\|\cdot\nonumber\\
&\hspace{1in}\rinner{\frac{v(u)}{\haarv{u}{}(V_{\epsilon})}}{\frac{v(u)}{\haarv{u}{}(V_{\epsilon})}}{\mathcal{H}(u)}d\mu(u).\label{ineq L}
\end{align}
\begin{claim}\label{clm lu est}With $L, ~\pi_u, ~a_0, ~d, ~\epsilon,\text{and~}V_{\epsilon}$ as above $$\|L(u)^*\bar{\pi}_u(d(u))\bar{\pi}_u(d^*(u))L(u)-\pi_u(a_0dd^*a_0^*(u))(\haarv{u}{}(V_{\epsilon}))^2\|<\frac{\epsilon}{6}(\haarv{u}{}(V_{\epsilon}))^2.$$\end{claim}

\begin{proof}First note that $$\left\|L(u)\right\|\leq \int_{V_{\epsilon}}\|\pi_{u}(\alpha_{\gamma\inv}(a_0(r(\gamma)^*)))\|d\haarv{u}{}(\gamma)<\haarv{u}{}(V_{\epsilon})$$
since $\|a_0\|<1$ from equation \ref{eq a close to 1}.
An unenlightening computation now shows 
\begin{align}\|L(u)^*\bar{\pi}_u(d(u))\bar{\pi}_u(d^*(u))L(u)&-\pi_u(a_0dd^*a_0^*(u))(\haarv{u}{}(V_{\epsilon}))^2\|\nonumber\\
&\leq 2\|d\|^2\haarv{u}{}(V_{\epsilon})\|L(u)-\pi_u(a_0^*(u))\haarv{u}{}(V_{\epsilon})\|.\label{eq lu est}
\end{align}
\begin{align*}\text{Now,}\quad\|&L(u)-\pi_u(a_0^*(u))\haarv{u}{}(V_{\epsilon})\|\\
&=\|\int_{V_{\epsilon}}\pi_{u}(\alpha_{\gamma\inv}(a_0(r(\gamma)^*)))d\haarv{u}{}(\gamma)-\pi_u(a_0^*(u))\haarv{u}{}(V_{\epsilon})\|\\
&\leq \int_{V_{\epsilon}}\|\pi_{u}(\alpha_{\gamma\inv}(a_0(r(\gamma)^*)))-\pi_u(a_0^*(u))\|d\haarv{u}{}(\gamma)<\epsilon/(12\|d\|^2)\haarv{u}{}(V_{\epsilon})\end{align*}
by equation \eqref{eq act est}.  Thus using equation \eqref{eq lu est}, we have \begin{align*}\|L(u)^*\bar{\pi}_u(d(u))&\bar{\pi}_u(d(u)^*)L(u)\\
&-\pi_u(a_0dd^*a_0^*(u))(\haarv{u}{}(V_{\epsilon}))^2\|<\epsilon/6\cdot(\haarv{u}{}(V_{\epsilon}))^2.\qedhere\end{align*} \end{proof} 
Combining Claim \ref{clm lu est} with \eqref{ineq L} we get
\begin{align}|\langle\Ind{}{}\pi &(\linner{E}{ad}{ad})h,h\rangle - \rinner{\pi((a_0d)^*v}{\pi((a_0d)^*v}{}|\nonumber\\
&<\int_{\units}\epsilon/6\cdot(\haarv{u}{}(V_{\epsilon}))^2\rinner{\frac{v(u)}{\haarv{u}{}(V_{\epsilon})}}{\frac{v(u)}{\haarv{u}{}(V_{\epsilon})}}{\mathcal{H}(u)}d\mu(u)=\epsilon/6.\label{eq Ein adad a0dv}\end{align}
Giving equation \eqref{1}.\end{proof}
Thus by combining equations \eqref{eq dv sq},~\eqref{eq a0dv sq} and \eqref{1} we get
 \begin{equation}|\rinner{\Ind{}{}\pi(\linner{E}{ad}{ad})h}{h}{}-\|d\|_{M(A)}^2|<\epsilon/2.\label{3}\end{equation}
Similarly by combining equations \eqref{eq av close to v} and \eqref{2} we get
\begin{equation}|\rinner{\Ind{}{}\pi(\linner{E}{a}{a})h}{h}{}-1|<\epsilon/(2\|d\|_{M(A)}^2).\label{4}\end{equation}
Now equations \eqref{3} and \eqref{4} give
\begin{align*}
&|\rinner{\Ind{}{}\pi(\linner{E}{ad}{ad})h}{h}{}-\|d\|^2_{M(A)}\rinner{\Ind{}{}\pi(\linner{E}{a}{a})h}{h}{}|<\epsilon
\end{align*}
Thus  $\|d\|_{\mathcal{L}(_E\!\overline{A_0})}= \|d\|_{M(A)}$ as desired. This completes the proof of Theorem \ref{thm proper morita}.
\end{proof}
\end{subsection}
\end{section}

\begin{section}{Fundamental Examples}
\begin{prop}Suppose $G$ is a  groupoid acting properly on space $X$, then $(C_0(X),G, \lt)$ is a proper groupoid dynamical system  with respect to the dense subalgebra $C_c(X)$. Furthermore, $C_0(X)^{\lt}\cong C_0(G\backslash X).$\label{ex scalar}\end{prop}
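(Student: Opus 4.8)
The plan is to verify the two conditions of Definition \ref{proper dyn sys} with $A_0 = C_c(X)$, and then identify $C_0(X)^{\lt}$. For condition \eqref{def prop 1}, I would observe that for $a,b\in C_c(X)$, the section $\linner{E}{a}{b}(\gamma) = a(r_X^{-1}\text{-fibre values})\,\lt_\gamma(\bar b(\cdots))$ is, in the bundle $\mathscr{T} = \units\times\C$ picture, essentially $\gamma\mapsto a(\gamma\cdot x)\overline{b(x)}$ where $x = r_X$-appropriate point over $s(\gamma)$; more precisely it is a function supported on $\{\gamma : r(\gamma)\in r_X(\supp a),\ \gamma^{-1}\cdot(\text{pt})\in \supp b\}$. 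Properness of the action $\Phi:G*X\to X\times X$ means the preimage of the compact set $\supp a\times \supp b$ is compact, and this is precisely what forces $\gamma\mapsto\|\linner{E}{a}{b}(\gamma)\|$ to be compactly supported and continuous, hence in $L^I(G)$ (indeed in $C_c(G)$). I would make this precise by checking that the relevant set of $\gamma$ is the image under a proper map of a compact set.

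For condition \eqref{def prop 2}, I need to produce $\rinner{a}{b}{D}\in M(A_0)^{\lt}$. Since $A = C_0(X)$ and $M(A) = C^b(X)$, the defining formula becomes, for $c\in C_c(X)$ and $x\in X$,
\[
(c\cdot\rinner{a}{b}{D})(x) = \int_G c(\gamma\cdot y)\,\overline{a(\gamma\cdot y)}\,b(\gamma\cdot y)\,d\lambda^{r_X(x)}_{}(\gamma),
\]
wait — I must be careful: the groupoid acts on $X$, $c(r_X(\gamma))$ in the abstract definition becomes evaluation of the section $c$ at the appropriate fibre, and here the fibre over $u\in\units$ is the set $r_X^{-1}(u)\times\C$, so one genuinely needs the action on $X$, not just on $\units$. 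The correct reading is that $\lt_\gamma$ sends the fibre over $s(\gamma)$ to the fibre over $r(\gamma)$ by translating the $X$-coordinate by $\gamma$. I would therefore compute that $\rinner{a}{b}{D}$ is the bounded continuous function on $X$ given by $x\mapsto \int_G \overline{a}(\gamma^{-1}\cdot x)\,b(\gamma^{-1}\cdot x)\,d\lambda_{r_X(x)}(\gamma)$ (the integral over the source fibre at $r_X(x)$), which is finite by properness and continuous by dominated convergence plus local compactness of $G*X$; invariance $\bar\lt_\gamma(d(s(\gamma))) = d(r(\gamma))$ holds because the integral is over a fibre that is translated isomorphically, and $A_0 d\subset A_0$ since multiplying a compactly supported function by a bounded continuous one stays compactly supported. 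Uniqueness is immediate from nondegeneracy of $C_c(X)$ in $C_0(X)$.

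Finally, for the identification $C_0(X)^{\lt}\cong C_0(G\backslash X)$: by Example \ref{ex d constant orbits}, an $\lt$-invariant element of $M(C_0(X)) = C^b(X)$ is exactly a bounded continuous function constant on $G$-orbits, i.e.\ (since $X/G$ is locally compact Hausdorff by properness) a function in $C^b(G\backslash X)$. It remains to show that $D_0 = \spn\{\rinner{a}{b}{D}\}$ is dense in $C_0(G\backslash X)$, not in all of $C^b(G\backslash X)$. I would argue that each $\rinner{a}{b}{D}$, viewed on $G\backslash X$, vanishes at infinity because $\supp a$ is compact so its image in $G\backslash X$ is compact, and that these functions separate points of $G\backslash X$ and, via products $\rinner{a}{a}{D}\geq 0$ with appropriate $a$ supported near a given orbit, generate enough of $C_0(G\backslash X)$; a Stone–Weierstrass argument on $G\backslash X$ (or the fact that $D_0$ is an ideal-like $*$-subalgebra nowhere vanishing) finishes it, so $A^{\lt} = \overline{D_0}^{M(A)} = C_0(G\backslash X)$. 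The main obstacle I anticipate is being careful about \emph{which fibre} the groupoid action moves points in — the bundle $\mathscr{T}$ is over $\units$ but the dynamics genuinely happen on $X$, so the translation in the defining formula of $\rinner{a}{b}{D}$ must be unwound correctly — together with pinning down that the span $D_0$ is dense in $C_0(G\backslash X)$ rather than merely sitting inside it, which is where Stone–Weierstrass and the properness-driven compactness of supports do the real work.
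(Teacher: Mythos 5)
Your treatment of condition (1) matches the paper's: properness of $\Phi:G*X\to X\times X$ forces $(\gamma,x)\mapsto a(x)\overline{b(\gamma^{-1}\cdot x)}$ to have compact support in $G*X$, whence $\linner{E}{a}{b}$ is integrable. For condition (2) and the identification $C_0(X)^{\lt}\cong C_0(G\backslash X)$ you take a genuinely different route. The paper reduces everything to a single citation, \cite[Lemma 2.9]{MRW87}: the averaging map $\lambda(F)([x])=\int_G F(\gamma^{-1}\cdot x)\,d\lambda^{r_X(x)}(\gamma)$ carries $C_c(X)$ \emph{onto} $C_c(G\backslash X)$. That one lemma simultaneously gives continuity of $d=\lambda(a^*b)$, its membership in $C_c(G\backslash X)\subset C^b(G\backslash X)$, and --- because every $F\in C_c(X)$ can be written as $a^*b$ --- the density of $D_0$ in $C_0(G\backslash X)$, so no Stone--Weierstrass argument is needed. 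Your hands-on alternative is workable but has two soft spots you should shore up. First, continuity of $x\mapsto\int_G F(\gamma^{-1}\cdot x)\,d\lambda^{r_X(x)}(\gamma)$ is not merely dominated convergence: the fibre measure $\lambda^{r_X(x)}$ varies with $x$, so you must combine the continuity axiom of the Haar system with properness (to trap all integrands in one compact set); this is exactly the work the cited MRW lemma does. Second, Stone--Weierstrass applies to closed $*$-subalgebras, and closure of $D_0$ under multiplication is not visible from the defining formula; you would either invoke Theorem \ref{thm proper morita} (so that $\overline{D_0}$ is the imprimitivity $C^*$-algebra) or verify the module identity $\rinner{a}{b}{D}\rinner{c}{d}{D}=\rinner{a}{b\cdot\rinner{c}{d}{D}}{D}$ using $A_0\rinner{c}{d}{D}\subset A_0$. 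Finally, a small but real slip you half-anticipated: your displayed formula integrates $\gamma^{-1}\cdot x$ against $\lambda_{r_X(x)}$ on the source fibre, but $\gamma^{-1}\cdot x$ is defined only when $r(\gamma)=r_X(x)$, so the integral must run over $r^{-1}(r_X(x))$ against $\lambda^{r_X(x)}$ (equivalently, substitute $\gamma\mapsto\gamma^{-1}$ first). With those repairs your argument goes through; the paper's version is shorter because the surjectivity assertion of the MRW lemma replaces your entire density argument.
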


Before proceeding we should note that if $\mathscr{C}$ is the the upper semicontinuous $C^*$-bundle associated to $C_0(X)$, it is not hard to see that the fibres of $\mathscr{C}$ are given by $\{C_0(r_X\inv(u))\}_{u\in\units}$. The action $\lt$ is then given by $\lt_{\gamma}(f)(x)=f(\gamma\inv\cdot x)$ for $x\in r_X\inv(r(\gamma))$ and $f\in C_0(r_X\inv(s(\gamma)))$. Furthermore, the bundle $\mathscr{C}$ is actually a \emph{continuous} $C^*$-bundle in that the map from $\mathscr{C}\rightarrow \C$ given by $c\mapsto \|c\|$ is continuous by \cite[Theorem 3.26]{TFB2}. 

Now to show Proposition \ref{ex scalar} we need to show:  
\begin{enumerate}
\item\label{cond 1 sc} For all $f,g\in C_c(X)$, the function \begin{align*}\linner{E}{f}{g}(\gamma):=f|_{r_X\inv(r(\gamma))} & \lt_{\gamma}\left(g^*|_{r_X\inv(s(\gamma))}\right)\\
&=\left(x\in r_X\inv(r(\gamma))\mapsto f(x)\overline{g(\gamma\inv\cdot x)}\right)\end{align*} is integrable.  
\item \label{cond 2 sc}If $f,g\in C_c(X)$,  there exists a function $\rinner{f}{g}{D}\in C^b(G\backslash X)\subset M(C_c(X))^{\lt}$, such that for all $h\in C_c(X)$
 \begin{equation}h\rinner{f}{g}{D}|_{r_X\inv(u)}=\int_G  h|_{r_X\inv(u)} \lt_{\gamma}\left(f^*g|_{r_X\inv(s(\gamma))}\right)d\haarv{}{u}(\gamma).\label{DinC0}\end{equation}
 \end{enumerate}

First we will show that $\linner{E}{f}{g}$ is integrable for $f,g\in C_c(X)$.  Consider the continuous function 
 \begin{align*}G*X&\rightarrow \C\\
  (\gamma,x)&\mapsto f(x)\overline{g(\gamma\inv\cdot x)}.
  \end{align*}
Using the properness of the $G$-action, it is not hard to see that this function has compact support and is hence integrable.  This gives that $\linner{E}{f}{g}$ is integrable. 

It remains to show property \eqref{cond 2 sc}.  It suffices to show for $F\in C_c(X)$, there exists a function $d\in C^b(G\backslash X)$ such that 
\begin{equation}h(x)d(x)=\int_G  h(x) F(\gamma\inv\cdot x)d\haarv{}{r_X(x)}(\gamma)\quad\forall~ h\in C_c(X),~~x\in X.\label{eq1}\end{equation}
Using the properness of the $G$-action,  a compactness argument shows the set $L:=\{\gamma\in G :F(\gamma\inv \cdot x)\neq 0\}$ is relatively compact for a  fixed $x\in X$,  and hence the function $\gamma\mapsto F(\gamma\inv\cdot x)$ is $\haarv{}{r_X(x)}$-integrable.   So we can define 
$$d(x):=\int_G  F(\gamma\inv\cdot x)d\haarv{}{r_X(x)}(\gamma).$$
This $d$ certainly satisfies equation \eqref{eq1}.  It remains to show that $d(x)\in C^b(G\backslash X)$.
For this, we will use the following stronger lemma from \cite{MRW87} which we will restate here for convenience.

\begin{lem}[{\cite[Lemma 2.9]{MRW87}}] Let $G$ act properly on the left of a locally compact Hausdorff space $X$, if $f\in  C_c(X)$, then
$$\lambda(f)([x])=\int_G f(\gamma\inv\cdot x)d \haarv{}{r_X(x)}(\gamma)$$ defines a map   of $C_c(X)$ onto $C_c(G\backslash X).$
\label{lem l(f) cts}
\end{lem}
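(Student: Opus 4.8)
The plan is to establish in turn that $\lambda(f)$ is well defined, compactly supported, and continuous on $G\backslash X$, and then that $f\mapsto\lambda(f)$ hits every element of $C_c(G\backslash X)$. First, for a fixed $x\in X$ the set of $\gamma$ with $r(\gamma)=r_X(x)$ and $\gamma\inv\cdot x\in\supp f$ is carried by $\gamma\mapsto(\gamma,\gamma\inv\cdot x)$ homeomorphically onto $\Phi\inv(\{x\}\times\supp f)$, which is compact since the action is proper; hence $\gamma\mapsto f(\gamma\inv\cdot x)$ lies in $C_c(G^{r_X(x)})$ and the integral converges. Independence of the orbit representative is immediate from left invariance of $\haars$: replacing $x$ by $\gamma_0\cdot x$ and substituting $\gamma\mapsto\gamma_0\gamma$ converts $\haarv{}{r(\gamma_0)}$ to $\haarv{}{s(\gamma_0)}=\haarv{}{r_X(x)}$ and cancels $\gamma_0$ out of the argument of $f$, so $\lambda(f)$ is a genuine function on $G\backslash X$. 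For its support, if $\lambda(f)([x])\neq0$ then some $\gamma\inv\cdot x\in\supp f$, whence $[x]\in q(\supp f)$, where $q\colon X\to G\backslash X$ is the (continuous, open, surjective) orbit map; thus $\supp\lambda(f)\subseteq q(\supp f)$ is compact.

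The heart of the argument, and the step I expect to be the main obstacle, is continuity, the difficulty being that the domain $G^{r_X(x)}$ over which one integrates varies with $x$. Since $q$ is open and $\lambda(f)$ is $G$-invariant, it is enough to check that $\Lambda\colon x\mapsto\int_G f(\gamma\inv\cdot x)\,d\haarv{}{r_X(x)}(\gamma)$ is continuous on $X$. I would fix $x_0$, choose a compact neighbourhood $K$ of $x_0$, and note that $\Phi\inv(K\times\supp f)$ is compact by properness, so its image $C$ under projection to $G$ is compact; then $f(\gamma\inv\cdot x)\neq0$ together with $x\in K$ forces $\gamma\in C$. Picking $\psi\in C_c(G)$ with $0\le\psi\le1$ and $\psi\equiv1$ on $C$, we have $\Lambda(x)=\int_G\psi(\gamma)f(\gamma\inv\cdot x)\,d\haarv{}{r_X(x)}(\gamma)$ for all $x\in K$, and $(\gamma,x)\mapsto\psi(\gamma)f(\gamma\inv\cdot x)$ is a continuous, compactly supported function on the pullback space $\{(\gamma,x):r(\gamma)=r_X(x)\}$. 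The standard continuity property of the Haar system $\haars$, applied to this function on the parameter space $X$, then gives continuity of $\Lambda$ on $\operatorname{int}K$; since $x_0$ was arbitrary, $\Lambda$ and hence $\lambda(f)$ is continuous, so $\lambda(f)\in C_c(G\backslash X)$.

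For surjectivity, given $\phi\in C_c(G\backslash X)$ I would first produce $g\in C_c(X)$ with $\lambda(g)>0$ on $\supp\phi$: for each orbit $[x]$ in the compact set $\supp\phi$ pick a nonnegative $g_x\in C_c(X)$ with $g_x(x)>0$; then $\lambda(g_x)([x])>0$ because $\{\gamma\in G^{r_X(x)}:g_x(\gamma\inv\cdot x)>0\}$ is a nonempty open set and $\haarv{}{r_X(x)}$ has full support, and by continuity of each $\lambda(g_x)$ together with a compactness cover, $g:=\sum_i g_{x_i}$ works. Now $\beta$, defined as $\phi/\lambda(g)$ on the open set $\{\lambda(g)>0\}\supseteq\supp\phi$ and as $0$ off $\supp\phi$, is a well-defined element of $C_c(G\backslash X)$, and $f:=g\cdot(\beta\circ q)\in C_c(X)$ satisfies $\lambda(f)([x])=\beta([x])\,\lambda(g)([x])=\phi([x])$ for every $[x]$ (checking separately the cases $\lambda(g)([x])>0$ and $\lambda(g)([x])=0$, the latter forcing $[x]\notin\supp\phi$). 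Hence $\lambda(f)=\phi$, and the four steps together show that $f\mapsto\lambda(f)$ maps $C_c(X)$ onto $C_c(G\backslash X)$.
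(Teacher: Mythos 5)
The paper gives no proof of this lemma at all: it is quoted verbatim from \cite[Lemma 2.9]{MRW87} purely as an external input, so there is nothing internal to compare your argument against. Judged on its own, your proof is correct and complete, and it is in essence the standard argument for this result. The four steps all check out: the identification of $\{\gamma\in G^{r_X(x)}:\gamma\inv\cdot x\in\supp f\}$ with $\Phi\inv(\{x\}\times\supp f)$ correctly uses properness for integrability; the change of variables $\gamma\mapsto\gamma_0\gamma$ is exactly the left-invariance computation needed for well-definedness on orbits; the localization of the support to $q(\supp f)$ (closed because $G\backslash X$ is Hausdorff for proper actions, as the paper records) is right; and the surjectivity argument via $g$ with $\lambda(g)>0$ on $\supp\phi$ and $f=g\cdot(\beta\circ q)$ is the classical device going back to Green and Renault. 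The only place where you lean on an unproved assertion is the continuity step: you invoke ``the standard continuity property of the Haar system'' for a function $F\in C_c(G*X)$ of two variables, i.e.\ that $x\mapsto\int_G F(\gamma,x)\,d\haarv{}{r_X(x)}(\gamma)$ is continuous, whereas the Haar system axiom only speaks of $u\mapsto\int_G\psi\,d\haarv{}{u}$ for $\psi\in C_c(G)$. This extension is true and standard (approximate $F$ uniformly by finite sums $\sum_i\psi_i(\gamma)h_i(x)$, or see \cite[Proposition 4.4]{MW08} for the same device), but strictly speaking it deserves a sentence; it is a gloss, not a gap.
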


Lemma \ref{lem l(f) cts} now guarantees that $d(x)=\lambda(F)(x)$ is in $C_c(G\backslash X)$. So condition \eqref{cond 2 sc} is satisfied and the action of $G$ on $C_0(X)$ by left translation is proper. Furthermore, the onto assertion of Lemma \ref{lem l(f) cts} gives that the generalized fixed point algebra, $C_0(X)^{\lt}$, is $C_0(G\backslash X)$.  

\begin{rmk} Suppose $X=\units$ in Proposition \ref{ex scalar},  since $r_{\units}$ is the identity map, the associated bundle $\mathscr{C}=\mathscr{T}=\units\times \C$.  Furthermore, by Theorem \ref{thm proper morita}, $C_0(G\backslash \units)$ is Morita equivalent to a subalgebra of $C_0(\units)\rtimes_{\lt,r}G\cong C^*_r(G)$.  In particular if $G=H\times X$ is the transformation group groupoid then $C_0(H\backslash X)$ is Morita equivalent to a subalgebra of $C_0(X)\rtimes_{\lt,r} H$.\end{rmk}

\begin{prop} Suppose $G$ is a proper groupoid. Let $(G,\A,\alpha)$ be a groupoid dynamical system, and $A$ be the $C_0(\units)$-algebra corresponding to $\A$, then $(G,\A,\alpha)$ is proper with respect to the subalgebra $A_0=C_c(\units)\cdot A$.\label{prpA}\end{prop}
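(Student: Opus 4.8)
The plan is to verify directly the two conditions of Definition \ref{proper dyn sys} for $A_0=C_c(\units)\cdot A$. I would first dispose of the structural points: $A_0$ is a $*$-subalgebra of $A$ because $C_c(\units)$ lies in $Z(M(A))$, and it is dense because $A$ is a nondegenerate $C_0(\units)$-module, so $\overline{C_0(\units)\cdot A}=A$; a Cohen-type argument (multiply by a $\psi\in C_c(\units)$ that is identically $1$ on the supports in sight) shows every element of $A_0$ is a single product $\phi a$ with $\phi\in C_c(\units)$ and $a\in A$, and since $\linner{E}{\cdot}{\cdot}$ and $\rinner{\cdot}{\cdot}{D}$ are bilinear we may always reduce to such elements. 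The geometric input is that, $G$ being proper, $(r,s)\colon G\to\units\times\units$ is a proper map, so $(r,s)\inv$ of a product of compact sets is compact; moreover $G$ acts properly on $\units$, which makes Lemma \ref{lem l(f) cts} available with $X=\units$.

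For condition \eqref{def prop 1}, writing $a=\phi_1 a_1$ and $b=\phi_2 a_2$ gives $\|\linner{E}{a}{b}(\gamma)\|\le|\phi_1(r(\gamma))|\,|\phi_2(s(\gamma))|\,\|a_1\|\,\|a_2\|$; the dominating function is continuous with support contained in the compact set $(r,s)\inv(\supp\phi_1\times\supp\phi_2)$, hence lies in $C_c(G)\subset L^I(G)$, so $\linner{E}{a}{b}$ is integrable. For condition \eqref{def prop 2}, given $a,b\in A_0$ I would take $\rinner{a}{b}{D}:=d$, where
\begin{equation*}
d(u):=\int_G \alpha_\gamma\bigl((a^*b)(s(\gamma))\bigr)\,d\haarv{}{u}(\gamma)\in A(u);
\end{equation*}
here the integrand, for $\gamma\in r\inv(u)$ (the support of $\haarv{}{u}$), takes values in $A(r(\gamma))=A(u)$, and writing $a^*b=\rho e$ with $\rho\in C_c(\units)$, $e\in A$, it equals $\rho(s(\gamma))\alpha_\gamma(e(s(\gamma)))$, which is norm-continuous in $\gamma$ and supported on the compact set $r\inv(u)\cap s\inv(\supp\rho)$; so the integral converges in $A(u)$. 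The substance is then to check $d=(d(u))_u\in\Gamma^b(\units,\A)\subset M(A)$; continuity of $u\mapsto d(u)$ is local, and after multiplying the integrand by a $C_c(G)$ cutoff reduces to the standard fact that integration of a section in $\Gamma_c(G,r^*\A)$ against $\haars$ yields a continuous section of $\A$. Granting $d\in M(A)$, the rest of the definition of $M(A_0)^\alpha$ is routine: $A_0 d\subset A_0$ by pulling the $C_c(\units)$-factor of an element of $A_0$ outside the product; $\alpha$-invariance follows by passing the isometry $\alpha_\eta$ through the (Bochner) integral and reindexing $\gamma\mapsto\eta\gamma$ using left invariance of $\haars$, which carries $\alpha_\eta(d(s(\eta)))=\int_G\alpha_{\eta\gamma}((a^*b)(s(\gamma)))\,d\haarv{}{s(\eta)}(\gamma)$ to $d(r(\eta))$; and the defining identity $(c\cdot d)(u)=\int_G c(r(\gamma))\alpha_\gamma((a^*b)(s(\gamma)))\,d\haarv{}{u}(\gamma)$ is immediate once $c(u)$ is moved inside the integral, where $r(\gamma)=u$. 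Uniqueness of $\rinner{a}{b}{D}$ in $M(A_0)^\alpha$ is forced by density: any two solutions $d,d'$ satisfy $c(d-d')=0$ for all $c\in A_0$, hence for all $c\in A$, hence $d=d'$.

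I expect the only real obstacle to be the uniform bound $\sup_u\|d(u)\|<\infty$ needed for $d\in M(A)$. This reduces, via $\|d(u)\|\le\|e\|\int_G|\rho(s(\gamma))|\,d\haarv{}{u}(\gamma)=\|e\|\,\lambda(|\rho|)(u)$, to boundedness of the function $u\mapsto\int_G|\rho(s(\gamma))|\,d\haarv{}{u}(\gamma)$; this function is $\lambda(|\rho|)$ in the notation of Lemma \ref{lem l(f) cts}, and since $G$ acts properly on $\units$ that lemma shows it lies in $C_c(G\backslash\units)$, hence is bounded, so $\sup_u\|d(u)\|\le\|e\|\,\|\lambda(|\rho|)\|_\infty<\infty$. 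This is exactly where the hypothesis that $G$ is proper is used. Everything else --- convergence of the various integrals, continuity of $d$, and the algebraic and invariance identities --- is straightforward bookkeeping with the bundle axioms and the Haar system, following the pattern of Proposition \ref{ex scalar}.
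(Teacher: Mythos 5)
Your proposal is correct, and its overall strategy coincides with the paper's: both use properness of $G$ to force compact supports in condition \eqref{def prop 1}, and both construct $\rinner{a}{b}{D}$ as the average $u\mapsto\int_G\alpha_{\gamma}\bigl((a^*b)(s(\gamma))\bigr)\,d\haarv{}{u}(\gamma)$, verify $\alpha$-invariance by pushing $\alpha_{\eta}$ through the integral and reindexing via left invariance of the Haar system, obtain the module identity by moving $c(u)=c(r(\gamma))$ inside the integral, and get uniqueness from density. Where you genuinely diverge is in how the two technical verifications are discharged. For integrability, you dominate $\|\linner{E}{a}{b}(\gamma)\|$ by the scalar function $|\phi_1(r(\gamma))|\,|\phi_2(s(\gamma))|\,\|a_1\|\,\|a_2\|\in C_c(G)$, whereas the paper invokes its Lemma \ref{lemcptLI} (via Proposition \ref{propusccptbd}) on compactly supported upper semicontinuous sections; your scalar domination is slightly more elementary and sidesteps the upper semicontinuity issue entirely. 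For membership of $d$ in $M(A)$ --- the real analytic content --- the paper routes the argument through the induced algebra $\Ind{G}{\units}(\A,\alpha)$ of Definition \ref{indGunits}, deferring well-definedness, continuity and boundedness to Lemma \ref{lem6.17} (whose proof is in turn only cited from \cite[Lemma 6.17]{tfb}) and then applying Claim \ref{clm ind is mult}; you instead prove the uniform bound directly by estimating $\|d(u)\|\le\|e\|\,\lambda(|\rho|)([u])$ and applying the scalar averaging Lemma \ref{lem l(f) cts} with $X=\units$, and you handle continuity by a local cutoff reducing to integration of an element of $\Gamma_c(G,r^*\A)$. Your version is more self-contained and makes the role of properness explicit at exactly the point where it is needed, at the cost of not exhibiting the generalized fixed point algebra as sitting inside the induced algebra $\Ind{G}{\units}(\A,\alpha)$, which the paper's packaging provides for free and which is useful for identifying $A^{\alpha}$ in examples.
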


\begin{rmk}Notice that this result is similar to \cite[Theorem 5.7]{Rieff04}.\end{rmk}

\begin{rmk} If $G=H\times X$ is a transformation group groupoid, then $G$ acts properly on its unit space if and only if $H$ acts properly on $X$.\end{rmk} 

\begin{proof}[Proof of Proposition \ref{prpA}] First note that $C_c(\units)\cdot A$ is dense in $A$.  To show that the dynamical system $(G,\A,\alpha)$ is proper, we first need to show that the functions 

\begin{align*}\linner{E}{f\cdot a}{ g\cdot b}:\gamma\mapsto f(r(\gamma))&a(r(\gamma))  \alpha_{\gamma}  \left( \overline{g(s(\gamma))}  b^*(s(\gamma))\right)\\ 
& \left(=f(r(\gamma))\overline{g(s(\gamma))}a(r(\gamma))\alpha_{\gamma}{\left( b^*(s(\gamma))\right)}\right)\end{align*}
are integrable for $a,b\in A$ and $f,g\in C_c(\units)$.  Using the properness of the $G$ it is not hard to see that these functions have compact support.  To finish showing that $\|\linner{E}{f\cdot a}{g\cdot b}\|_{_{I}}<\infty$, we use the following lemma.

\begin{lem}Let $G$ be a groupoid, $\B$ be an upper semicontinuous $C^*$-bundle over $G$ and suppose $f\in \Gamma_c(G,\B)$.  Then $\|f\|_I<\infty$.\label{lemcptLI}\end{lem}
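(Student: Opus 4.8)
The plan is to exploit the compact support of $f$ twice: once to bound $\|f(\cdot)\|$ pointwise, and once to bound the $\haarv{}{u}$- and $\lambda_u$-measures of $\supp f$ uniformly in $u\in\units$. Combining the two estimates immediately bounds $\|f\|_I$.

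First I would set $K:=\supp f$, a compact subset of $G$, and consider the function $N\colon\gamma\mapsto\|f(\gamma)\|$. Since $f$ is a continuous section of $\B$ and, by axiom B1, $a\mapsto\|a\|$ is upper semicontinuous on $\B$, the composition $N$ is upper semicontinuous on $G$; moreover $N$ vanishes off $K$ (as $f(\gamma)=0_\gamma$ there). An upper semicontinuous real-valued function attains its supremum on the compact set $K$, so $M:=\sup_{\gamma\in G}N(\gamma)<\infty$; note also that $N$, being upper semicontinuous, is Borel and hence $\haarv{}{u}$- and $\lambda_u$-measurable for every $u$.

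Next I would bound $\haarv{}{u}(K)$ uniformly in $u$. Choose $\phi\in C_c(G)$ with $0\le\phi\le1$ and $\phi\equiv1$ on $K$. By the defining property of a Haar system, the function $u\mapsto\int_G\phi\,d\haarv{}{u}$ is continuous with compact support (contained in $r(\supp\phi)$), hence bounded by some constant $C\ge0$; since $\chi_K\le\phi$ we obtain $\haarv{}{u}(K)\le\int_G\phi\,d\haarv{}{u}\le C$ for every $u\in\units$. Running the identical argument with the source-fibred Haar system $\shaars$ (choosing $\psi\in C_c(G)$ with $\psi\equiv1$ on $K$, using the continuity of $u\mapsto\int_G\psi\,d\lambda_u$) produces a constant $C'\ge0$ with $\lambda_u(K)\le C'$ for all $u$.

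Finally, for each $u$ we have
\[
\int_G\|f\|\,d\haarv{}{u}=\int_K N\,d\haarv{}{u}\le M\,\haarv{}{u}(K)\le MC,\qquad
\int_G\|f\|\,d\lambda_u=\int_K N\,d\lambda_u\le M\,\lambda_u(K)\le MC',
\]
so $\|f\|_I\le M\max\{C,C'\}<\infty$. There is no genuine obstacle in this argument; the only point that needs care is that $\|f(\cdot)\|$ is bounded even though it is merely upper semicontinuous rather than continuous — this is precisely where the compactness of $\supp f$ is used.
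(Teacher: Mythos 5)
Your argument is correct and is essentially the paper's own proof: the paper likewise bounds $\|f\|_\infty$ via upper semicontinuity on the compact support (its Proposition \ref{propusccptbd}) and then uses $\sup_u\haarv{}{u}(K)<\infty$ together with symmetry. The only difference is that you explicitly justify the uniform bound on $\haarv{}{u}(K)$ by dominating $\chi_K$ with a $C_c(G)$ function, a detail the paper asserts without proof.
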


To prove this we use the subsequent proposition, which follows from a standard compactness argument.

\begin{prop} Suppose $X$ is a locally compact Hausdorff space, and let $f:X\rightarrow \R_{\geq 0}$ be an upper semicontinuous function with compact support, then $\|f\|_{\infty}<\infty$.  \label{propusccptbd}\end{prop}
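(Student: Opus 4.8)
\textbf{Proof proposal for Proposition \ref{propusccptbd}.} The plan is to exploit upper semicontinuity to produce an open cover of the compact support by sublevel sets and then extract a finite subcover. Let $K=\supp(f)$, which is compact by hypothesis, and note that since $f$ takes values in $\R_{\geq 0}$ it is finite at every point. For each $n\in\mathbb{N}$, axiom-style upper semicontinuity (in the form used in B1 of Definition \ref{defn usccb}) gives that $U_n:=\{x\in X:f(x)<n\}$ is open. Because $f(x)<\infty$ for every $x$, the family $\{U_n\}_{n\in\mathbb{N}}$ covers $X$, hence in particular covers $K$.

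By compactness of $K$ there is a finite subcollection covering $K$, and since the $U_n$ are nested increasing this means $K\subset U_N$ for some single $N\in\mathbb{N}$, i.e. $f(x)<N$ for all $x\in K$. Outside $K$ we have $f(x)=0$ by definition of the support. Therefore $f(x)<N$ for all $x\in X$, so $\|f\|_{\infty}\leq N<\infty$, as claimed. I do not expect any real obstacle here; the only point requiring a word of care is observing that $f$ is genuinely finite-valued so that the sublevel sets exhaust $X$, and that the support condition handles the complement of $K$. \qed
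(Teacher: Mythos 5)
Your proof is correct and is precisely the ``standard compactness argument'' that the paper invokes without writing out: you cover the compact support by the nested open sublevel sets $U_n=\{x\in X: f(x)<n\}$ (open by upper semicontinuity), extract a finite subcover to bound $f$ on $K$, and note $f$ vanishes off $K$. This matches the paper's intended argument, so there is nothing to add.
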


\begin{proof}[Proof of Lemma \ref{lemcptLI}]
Let $K$ be the support of $f$.  By Proposition \ref{propusccptbd} we know that $\|f\|_{\infty}<\infty$.  So 
\begin{equation*}\int_G \|f(\gamma)\|d\haarv{}{u}(\gamma)\leq \int_G \chi_{{}_K}(\gamma)\|f\|_{\infty}d\haarv{}{u}(\gamma)=\|f\|_{\infty}\haarv{}{u}(K).\end{equation*}
Now since $\sup\{\haarv{}{u}(K)\}< \infty$, we have  $\|f\|_I<\infty$ by symmetry.
\end{proof}

It remains to show that for $f\cdot a, g\cdot b\in C_c(\units)\cdot A$ that there exists an element $\rinner{f\cdot a}{g\cdot b}{D}\in M(C_c(\units)\cdot A)^{\alpha}$ such that for all $h\cdot c\in A_0$,
\begin{equation*}\bigl((h\cdot c)\rinner{f\cdot a}{g\cdot b}{D}\bigr)([u])
=\int_G (h\cdot c)(r(\gamma))\alpha_{\gamma}\left((f\cdot a)^*(s(\gamma))(g\cdot b)(s(\gamma))\right)d\haarv{}{u}(\gamma).\end{equation*}
For this we will follow \cite[Lemma 6.17]{tfb} and \cite[Lemma 3.5]{aHRW00}.  

\begin{rmk} \label{rmkactr*_X} Let  $(\A,G,\alpha)$ be a groupoid dynamical system, and suppose $G$ acts on the left of a locally compact Hausdorff space $X$.  We can define the pull back bundle 
$$r_X^*\A:=\{(x,a):r_X(x)=p_{\A}(a)\}$$
and a continuous action of $G$ on $r_X^*\A$ via
$$\alpha^{r_X}_{\gamma}(x,a)=(\gamma\cdot x, \alpha_{\gamma}(a)).$$\index{$\alpha^{r_X}$}
\end{rmk}

\begin{defn} Let $(\A,G,\alpha)$ be a groupoid dynamical system and suppose that $G$ acts on the left of a locally compact Hausdorff space $X$. \index{$\Ind{G}{X}(\A, \alpha)$} Define
\begin{align*}\Ind{G}{X}(\A, \alpha):=\{f\in \Gamma^b(X, &r_X^*\A):  f( x)  =(\alpha^{r_X}_{\gamma})\inv(f(\gamma\cdot x))\\
& \text{~and~} \bigl([u]\mapsto \|f(u)\|\bigr) \text{~vanishes at~}\infty\}.
\end{align*}
\label{indGunits}\end{defn}
To finish the proof of Proposition \ref{prpA}, we will show that $\Ind{G}{X}(\A, \alpha)\subset M(A)^{\alpha}$ and that for $a,b\in A_0$, there exists a $d\in \Ind{G}{X}(\A, \alpha)$ satisfying the required properties in Definition \ref{proper dyn sys}.
\begin{rmk} If $H$ is a group and $A$ is a $C^*$-algebra, $\Ind{H}{X}(A,\alpha)$ is normally defined as:   \begin{align*}\Ind{H}{X}(A, \alpha):=\{f\in C^b(X, A):  f( & x)  =\alpha_{s}\inv\bigl(f(s\cdot x)\bigr)\\ & \text{~and~} \bigl([u]\mapsto \|f(u)\|\bigr) \in C_0(H\backslash X)\}.\end{align*}   This definition doesn't make sense for  an upper semicontinuous $C^*$-bundle, since the norm is upper semicontinuous.  But in the group case, the continuity of $[u]\mapsto \|f(u)\|$ is implied by the  condition that $f\in C^b(X,A)$. So the important part of the condition  $\left([u]\mapsto \|f(u)\|\right)\in C_0(H\backslash X)$ is that the function $[u]\mapsto \|f(u)\|$ vanishes at infinity.
\end{rmk}

\begin{lem} Let $(\A,G,\alpha)$ be a groupoid dynamical system. For $f\in C_c(\units)$ and $a\in A=\sect{\units}{\A}$, then \begin{equation}\label{lambdaf}\lambda(f\cdot a)(u):=\int_G \alpha_{\gamma}\left(f(s(\gamma)a(s(\gamma)))\right)d\haarv{}{u}(\gamma)\end{equation}\index{$\lambda(f)$} gives a well-defined element of $\Ind{G}{\units}(\A,\alpha)$.\label{lem6.17}\end{lem}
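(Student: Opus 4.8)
The plan is to verify, one requirement at a time, that $\lambda(f\cdot a)$ satisfies the defining conditions of $\Ind{G}{\units}(\A,\alpha)$ from Definition \ref{indGunits}: that $\lambda(f\cdot a)(u)$ is a well-defined element of $A(u)$ for each $u$; that $u\mapsto\lambda(f\cdot a)(u)$ is a bounded continuous section of $\A$ satisfying the $\alpha^{r_\units}$-equivariance; and that the induced function $[u]\mapsto\|\lambda(f\cdot a)(u)\|$ on $G\backslash\units$ vanishes at infinity.

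Write $C=\supp f$. For well-definedness, I would fix $u\in\units$ and observe that the integrand $\gamma\mapsto\alpha_\gamma\bigl(f(s(\gamma))a(s(\gamma))\bigr)$ vanishes off $G^u\cap s\inv(C)=\Phi\inv(\{u\}\times C)$, which is compact because $G$ is proper. Since $\gamma\mapsto a(s(\gamma))$ is a continuous section of $s^*\A$, $f$ and $s$ are continuous, and $(\gamma,b)\mapsto\alpha_\gamma(b)$ is continuous on $G*\A$ by Definition \ref{def dyn sys}, the integrand is a continuous, compactly supported, $A(u)$-valued function on $G^u$; hence the integral exists in $A(u)$, with $\|\lambda(f\cdot a)(u)\|\le\|f\|_\infty\|a\|\,\haarv{}{u}\bigl(\Phi\inv(\{u\}\times C)\bigr)$. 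For continuity I would argue locally: fix $u_0$ and a relatively compact open neighborhood $U$, let $K:=\Phi\inv(\bar U\times C)$ (compact, by properness), and choose $\psi\in C_c(G)$ with $0\le\psi\le 1$ and $\psi\equiv 1$ on $K$. Then for every $u\in U$ the integrand over $G^u$ is supported inside $K$, so $\lambda(f\cdot a)(u)=\int_G\psi(\gamma)\alpha_\gamma\bigl(f(s(\gamma))a(s(\gamma))\bigr)\,d\haarv{}{u}(\gamma)$, and now $F(\gamma):=\psi(\gamma)\alpha_\gamma\bigl(f(s(\gamma))a(s(\gamma))\bigr)$ is a genuine element of $\Gamma_c(G,r^*\A)$. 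The map $u\mapsto\int_G F\,d\haarv{}{u}$ carries $\Gamma_c(G,r^*\A)$ into the continuous sections of $\A$ over $\units$ by the standard continuity-of-the-fibred-integral estimate underlying, e.g., \cite[Proposition 4.4]{MW08}; restricting to $U$ gives continuity at $u_0$, which was arbitrary.

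For the equivariance I would move $\alpha_\eta$ inside the integral (legitimate, since $\alpha_\eta$ is a continuous $*$-isomorphism) and substitute $\gamma\mapsto\eta\gamma$, using left invariance of the Haar system, to obtain $\alpha_\eta\bigl(\lambda(f\cdot a)(s(\eta))\bigr)=\lambda(f\cdot a)(r(\eta))$ for all $\eta\in G$; this is precisely the condition $f(x)=(\alpha^{r_\units}_\gamma)\inv(f(\gamma\cdot x))$ of Definition \ref{indGunits}. In particular $u\mapsto\|\lambda(f\cdot a)(u)\|$ is constant on $G$-orbits, so it descends to a function $\bar g$ on the orbit space $G\backslash\units$, which is locally compact Hausdorff because $G$ is proper. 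If $\lambda(f\cdot a)(u)\ne 0$ then the integrand over $G^u$ is nonzero for some $\gamma$, so $s(\gamma)\in C$ and $[u]=[s(\gamma)]$ lies in the image of $C$ under the open continuous quotient map $q\colon\units\to G\backslash\units$; thus $\bar g$ is supported in the compact set $q(C)$. Since $u\mapsto\|\lambda(f\cdot a)(u)\|$ is upper semicontinuous (Definition \ref{defn usccb}) and $q$ is open, $\bar g$ is upper semicontinuous with compact support, hence bounded by Proposition \ref{propusccptbd} and vanishing at infinity; this simultaneously supplies the missing boundedness of the section, so $\lambda(f\cdot a)\in\Gamma^b(\units,\A)$ and all the defining conditions are met.

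The step I expect to be the main obstacle is the continuity of $u\mapsto\lambda(f\cdot a)(u)$: one must first use properness to cut the integrand down to a genuinely compactly supported section (and check that the cutoff $\psi$ does not change the value of the integral over $G^u$ for $u$ in the chosen neighborhood), and then invoke the routine-but-nontrivial upper-semicontinuity and compactness estimate showing that integrating a section of $r^*\A$ in $\Gamma_c(G,r^*\A)$ against the Haar system yields a continuous section of $\A$ over $\units$; everything else (integrability for fixed $u$, equivariance, and vanishing at infinity) is comparatively direct once properness is in hand.
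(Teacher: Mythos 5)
Your proof is correct and follows the route the paper intends: the paper gives no argument beyond citing \cite[Lemma 6.17]{tfb}, and your verification --- properness to get fibrewise compact support of the integrand, a cutoff $\psi$ reducing to the continuity of $u\mapsto\int_G F\,d\haarv{}{u}$ for $F\in\Gamma_c(G,r^*\A)$, left invariance of the Haar system for the equivariance (a computation the paper in fact displays right after the lemma), and compact support of the norm function on $G\backslash\units$ for vanishing at infinity --- is precisely the groupoid adaptation of that group-case argument. The one point worth flagging is that you correctly import the hypothesis that $G$ is proper from the ambient Proposition \ref{prpA}: the lemma as stated omits it, but without it the integral need not converge, so your use of properness is not optional.
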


The proof of this lemma is essentially the same as the proof in \cite[Lemma 6.17]{tfb} with some minor modifications.
\begin{claim}\label{clm ind is mult} If $f\in \Ind{G}{\units}(\A,\alpha)$ then $$m_f:a\mapsto \left(v\mapsto f(v)a(v)\right)$$ is a multiplier of $A=\sect{\units}{\A}$.  \end{claim}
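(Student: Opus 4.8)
The plan is to verify the three defining properties of a multiplier of $A = \sect{\units}{\A}$ for the left-multiplication map $m_f$, namely that for each $f \in \Ind{G}{\units}(\A,\alpha)$ the assignment $m_f(a)(v) := f(v)a(v)$ lands in $A$, is $A$-linear, and is bounded with respect to the $C^*$-norm on $A$. Since $f \in \Gamma^b(\units, r_{\units}^*\A)$, and $r_\units$ is the identity map, $f$ is genuinely a bounded continuous section of $\A$ itself, so $f(v) \in A(v) = M(A(v))$ acts on each fibre; the fibrewise product $f(v)a(v)$ therefore makes sense. First I would check that $v \mapsto f(v)a(v)$ is a continuous section of $\A$: this follows from axiom B2 of Definition \ref{defn usccb}, since $v \mapsto (f(v), a(v))$ is a continuous section of $\A * \A$ and multiplication $\A * \A \to \A$ is continuous. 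To see that it vanishes at infinity (so that it lies in $A = \Gamma_0$, not merely $\Gamma^b$), I would use $\|f(v)a(v)\| \le \|f\|_\infty \|a(v)\|$ together with upper semicontinuity of the norm (axiom B1) and the fact that $v \mapsto \|a(v)\|$ vanishes at infinity because $a \in A = \Gamma_0(\units,\A)$; given $\epsilon > 0$, the set $\{v : \|a(v)\| \ge \epsilon/\|f\|_\infty\}$ is compact, hence so is (a fortiori, is contained in a compact set) $\{v : \|f(v)a(v)\| \ge \epsilon\}$.

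Next I would record that $m_f$ is $A$-linear: for $a, b \in A$ and $v \in \units$ we have $m_f(ab)(v) = f(v)a(v)b(v) = (m_f(a)b)(v)$, and additivity and $\C$-linearity are immediate fibrewise. Boundedness is the estimate $\|m_f(a)\| = \sup_v \|f(v)a(v)\| \le \|f\|_\infty \|a\|$, so $m_f \in \mathcal{L}(A)$ with $\|m_f\| \le \|f\|_\infty$. To upgrade from ``left multiplier'' to ``double-sided multiplier'' (an element of $M(A)$), I would pair $m_f$ with the right-multiplication map $m_f^{(r)}(a)(v) := a(v)f(v)$, defined and bounded by the same argument, and observe that $(m_f^{(r)}(a)) b = a (m_f(b))$ fibrewise since multiplication in each $A(v)$ is associative; thus $(m_f^{(r)}, m_f)$ is a double centralizer, i.e. an element of $M(A)$. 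Concretely, $m_f$ is just the image of $f$ under the canonical identification $\Gamma^b(\units,\A) \cong M(A) = M(\sect{\units}{\A})$, so the claim amounts to checking this identification is compatible with the fibrewise picture, which is exactly the content of the axioms B1--B2.

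I do not anticipate a serious obstacle here; the only mildly delicate point is confirming that $v \mapsto f(v)a(v)$ actually vanishes at infinity rather than merely being bounded, which is where the hypothesis $a \in A_0 \subset A = \Gamma_0$ (as opposed to $a \in M(A)$) is used, combined with upper semicontinuity of the norm. One must also be slightly careful that the topology on $r_\units^*\A$ agrees with that on $\A$ under the homeomorphism $(v,a) \mapsto a$, so that continuity of sections transfers; this is immediate from the definition of the pullback bundle since $r_\units = \id$. Everything else is a routine fibrewise verification using the bundle axioms.
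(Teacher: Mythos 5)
Your argument is correct and follows essentially the same route as the paper: the paper simply observes that $v\mapsto f(v)a(v)$ and $v\mapsto f(v)^*a(v)$ lie in $\sect{\units}{\A}$ and then invokes \cite[Lemma C.11]{TFB2}, whereas you prove the content of that cited lemma by hand (continuity via axiom B2, vanishing at infinity via B1 and the bound $\|f(v)a(v)\|\leq\|f\|_\infty\|a(v)\|$, and the double-centralizer pairing of left and right multiplication). The only cosmetic difference is that the paper uses $f^*a$ where you use right multiplication by $f$; these are interchangeable via the involution, so nothing is lost.
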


\begin{proof} First note that $f(v)\in A(v)\subset M(A(v))$ and for $f\in \Gamma^b(\units, \A)$ and $a\in \sect{\units}{\A}$ we have that $\left(v\mapsto f(v)a(v)\right)\in  \sect{\units}{\A}$.  Similarly, $v\mapsto f(v)^*a(v)\in \Gamma_0(\units,\A)$. So \cite[Lemma C.11]{TFB2} implies that $f\in M(A)$.\end{proof}  

Claim \ref{clm ind is mult} and Lemma \ref{lem6.17} give that $\lambda(f\cdot a)\in M(A)$ for all $f\in C_c(\units)$ and $a\in A$.   Furthermore, since $A$ is a $C_0(\units)$-algebra, $C_c(\units)\subset Z(M(A))$, so if $m\in M(A)$, $g\in C_c(\units)$ and $b\in A$ then $m(g\cdot a)=g\cdot (ma)\in C_c(\units)\cdot A$.  Thus, $\lambda(f\cdot a)(C_c(\units)\cdot A)\subset C_c(\units)\cdot A$ and so $\lambda(f\cdot a)\in M(C_c(\units)\cdot A)$.  

Notice
\begin{align*}\alpha_{\eta}(\lambda(f\cdot a)(s(\eta))) &= \alpha_{\eta}\Bigl(\int_G \alpha_{\gamma}(f\cdot a(s(\gamma)))d\haarv{}{s(\eta)}(\gamma)\Bigr)\\
&=\int_G \alpha_{\eta\gamma}(f\cdot a(s(\eta\gamma)))d\haarv{}{s(\eta)}(\gamma)\\
&=\int_G \alpha_{\gamma}(f\cdot a(s(\gamma)))d\haarv{}{r(\eta)}(\gamma)=\lambda(f\cdot a)(r(\eta)).\end{align*}
Thus $\lambda(f\cdot a)\in M(A_0)^{\alpha}$.

Finally, we need to show that for $g\in C_c(\units)$ and $b\in A$, then $$\left((g\cdot b)\lambda(f\cdot a)\right)(u)=\int_G g(r(\gamma))b(r(\gamma))\alpha_{\gamma}(f\cdot a(s(\gamma)))d\haarv{}{u}(\gamma).$$
But this is just a straight forward calculation.

Notice that $(f\cdot a)(g\cdot b)=fg\cdot ab\in C_c(\units)\cdot A$.  Thus we can define $\rinner{f\cdot a}{g\cdot b}{D}:=\lambda((f\cdot a)^*(g\cdot b))$ and from the above argument this 
$\rinner{f\cdot a}{g\cdot b}{D}$ has the desired properties, making $(\A,G,\alpha)$ a proper dynamical system with respect to the subalgebra $C_c(\units)\cdot A$.  \end{proof}

\begin{rmk} The subalgebra $E\subset \A\rtimes_{\alpha,r} G$ guaranteed by Proposition \ref{prpA} and Theorem \ref{thm proper morita} is actually an ideal in $\A\rtimes_{\alpha,r} G$.  To see this, suppose $f\in \Gamma_c(G,r^*\A)$, and $a,b\in A_0$ then by a similar calculation to the one given in Lemma \ref{lem ind inner} we get 
$$(f*\linner{E}{a}{b})(\gamma)=\linner{E}{f\cdot a}{b}(\gamma).$$
So to show that $E$ is an ideal in $\A\rtimes_{\alpha,r} G$ it suffices to show that $f\cdot A_0\subset A_0.$  Now suppose $G$ acts properly on its unit space and $A_0=C_c(\units)\cdot A$ as in Proposition \ref {prpA},  if $a\in A_0$ then 
$$f\cdot a:u\mapsto \int_G f(\eta)\alpha_{\eta}(a(s(\eta)))d\haarv{}{u}(\eta).$$
But the integrand $f(\eta)\alpha_{\eta}(a(s(\eta)))$ is continuous since $f$, $a$ and the $G$ action are.  It has compact support since $f$ does.  Thus $f\cdot a$ is a continuous compactly supported section,  giving  $f\cdot a\in A_0$, so $f\cdot A_0\subset A_0$ and hence $E$ is an ideal.

A similar argument shows that in Proposition \ref{ex scalar}, the subalgebra $E\subset C_0(X)\rtimes_{\lt,r}G$ guaranteed by Theorem \ref{thm proper morita} is also an ideal of the reduced crossed product.\end{rmk}


\end{section}
\begin{section}{Saturation}
Theorem \ref{thm proper morita} guarantees that if $(\A, G, \alpha)$ is a proper dynamical system, then the generalized fixed point algebra is Morita equivalent to a subalgebra of the reduced crossed product, $\A\rtimes_{\alpha, r}G$.  This theorem is most useful when this subalgebra is itself an object we'd like to study.  In particular, we are interested in when this subalgebra is actually the algebra $\A\rtimes_{\alpha, r}G$.  So following \cite{Rieffel90} we make the following definition.
\begin{defn}We call a proper dynamical system $(\A, G, \alpha)$, {\emph{saturated}} \index{saturated} if $_{E_0}\!{A_0}_{D_0}$ completes to a $\A\rtimes_{\alpha, r}G-A^{\alpha}$ imprimitivity bimodule in Theorem \ref{thm proper morita}.\label{defn saturated}\end{defn}

Saturated dynamical systems are the proper dynamical systems primarily studied in applications \cite{aHRW00}, \cite{aHRW03}, \cite{AHRW05}, \cite{KQR08}.  So it is important to find some conditions which guarantee that a given proper action is saturated.

The goal of this section is to show the following theorem.
\begin{thm} Suppose $(\A,G, \alpha)$ is a groupoid dynamical system and let $A=\Gamma_0(\units, \A)$ be the associated $C_0(\units)$-algebra. Suppose further that $G$ is principal and proper. Then the action of $G$ on $\A$ is saturated with respect to the dense subalgebra $C_c(\units)\cdot A$. \label{thm trivsat} \end{thm}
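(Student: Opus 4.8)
The plan is to show that $E_0 = \spn\{\linner{E}{a}{b} : a,b \in A_0\}$ is dense in $\A\rtimes_{\alpha,r}G$, which (given Theorem \ref{thm proper morita} and Proposition \ref{prpA}, which already establish that $(\A,G,\alpha)$ is proper with respect to $A_0 = C_c(\units)\cdot A$) is precisely what saturation requires. Since $\Gamma_c(G,r^*\A)$ is dense in $\A\rtimes_{\alpha,r}G$ in the reduced norm, and since $\|\cdot\|_r \le \|\cdot\|_I$ on $\Gamma_c(G,r^*\A)$, it suffices to approximate an arbitrary $f \in \Gamma_c(G,r^*\A)$ in the $I$-norm by elements of $E_0$. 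Using a partition of unity and the local triviality of the bundle structure, one reduces to approximating sections supported on small open sets where $\A$ looks like a product, so the problem localizes to building, for a prescribed open neighborhood of the support of $f$, a finite sum $\sum_i \linner{E}{a_i}{b_i}$ that is $I$-norm close to $f$.

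The engine for this is the \emph{averaging argument} advertised in the introduction, needed because translates of open sets in a groupoid need not be open. Concretely: since $G$ is principal and proper, the orbit equivalence relation $R = \{(r(\gamma),s(\gamma)) : \gamma\in G\}$ is a proper principal groupoid and $G \cong R$ as topological groupoids (the map $\gamma \mapsto (r(\gamma),s(\gamma))$ is a homeomorphism). In particular every $\lambda^u$ can be identified with a measure on the orbit of $u$, and the map $\gamma\mapsto s(\gamma)$ restricted to $G^u$ is a homeomorphism onto the orbit. I would then choose, for a point $\gamma_0$ in the support of $f$, functions $a \in A_0$ supported near $r(\gamma_0)$ and $b \in A_0$ supported near $s(\gamma_0)$, so that $\linner{E}{a}{b}(\gamma) = a(r(\gamma))\alpha_\gamma(b(s(\gamma))^*)$ is supported in a small bisection-like neighborhood of $\gamma_0$; by shrinking supports and using Lemma \ref{lem act close} (to control $\alpha_\gamma$ against $\alpha$ near units) together with the continuity axioms B1--B4 of the bundle, the value $\linner{E}{a}{b}(\gamma)$ can be made to approximate a prescribed "bump" section. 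Averaging over a cover of $\supp f$ by such neighborhoods, with a subordinate partition of unity pulled back appropriately, and summing the corresponding $\linner{E}{a}{b}$, produces an element of $E_0$ within any prescribed $I$-norm tolerance of $f$. The uniform bounds on $\haarv{}{u}(K)$ over compact $K$ (exactly as in the proof of Lemma \ref{lemcptLI}) keep the $I$-norm estimates uniform.

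The main obstacle, and the place where the new averaging argument earns its keep, is the failure of openness of translation: one cannot simply transplant an open set from a neighborhood of the units out along an arrow and expect an open set, so the naive "choose a bisection through $\gamma_0$" does not work in general. The remedy is to do the approximation \emph{at the level of the Haar measures} rather than pointwise on $G$: one approximates $f$ by functions of the form $\gamma\mapsto \int$ of a product data against $\lambda^{r(\gamma)}$, exploiting that properness makes the relevant supports compact and that principality makes $G^u \to \text{orbit}$ injective so no "folding" occurs in the integral. I would also need to check that the approximants genuinely lie in $E_0$ and not merely in its $I$-closure — this is handled by keeping the data $a,b$ in $C_c(\units)\cdot A$ and using that finite sums suffice because $\supp f$ is compact. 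Once the $I$-norm density of $E_0$ in $L^I(G,r^*\A)$ restricted to $\Gamma_c$ is established, density in the reduced norm is automatic, $E = \overline{E_0}^{\A\rtimes_{\alpha,r}G} = \A\rtimes_{\alpha,r}G$, and Theorem \ref{thm proper morita} upgrades to the full imprimitivity bimodule, i.e. the system is saturated.
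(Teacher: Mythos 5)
Your overall shape is right---reduce to inductive-limit (equivalently $I$-norm on compactly supported sections) approximation, use properness for compact supports and principality to prevent folding, and use an averaging over the Haar system to compensate for the failure of openness of translation---but there are three concrete gaps. First, you invoke ``local triviality of the bundle structure'': upper semicontinuous $C^*$-bundles are not locally trivial, and nothing in Definition \ref{defn usccb} gives you product neighborhoods. The paper avoids this entirely by using the isomorphism $C_0(G)\otimes_{C_0(\units)}\Gamma_0(\units,\A)\cong\Gamma_0(G,r^*\A)$ from \cite[Proposition 1.3]{RW85} together with \cite[Proposition C.24]{TFB2} to show that sections of the form $\gamma\mapsto\phi(\gamma)f(r(\gamma))a(r(\gamma))g(r(\gamma))b(r(\gamma))^*$ (with no $\alpha_\gamma$) are dense in $\Gamma_c(G,r^*\A)$. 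Second, your plan to make $\linner{E}{a}{b}(\gamma)=a(r(\gamma))\alpha_\gamma(b(s(\gamma))^*)$ approximate a ``prescribed bump section'' hides the hardest point: the values are \emph{products} in the fibre algebra with an $\alpha_\gamma$ twisting one factor, and you never explain how such products approximate an arbitrary fibre element while simultaneously controlling the dependence on $\gamma$. The paper resolves this with a two-stage factorization: it first proves the scalar saturation (Theorem \ref{thm sat scal}), i.e.\ that $\spn\{\gamma\mapsto g(r(\gamma))\overline{h(s(\gamma))}\}$ is dense in $C_c(G)$, and then uses that scalar result to absorb the $C_c(G)$-factor $\phi$, leaving only the fibre product $a(r(\gamma))b^*(r(\gamma))$ to be handled by density of $A_0^2$ in $A$ and the averaging step that trades $\alpha_\gamma(b^*(s(\gamma))\overline{g(s(\gamma))})$ for $b^*(r(\gamma))\overline{g(r(\gamma))}$ up to $\epsilon$.

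Third, and most seriously, the scalar density statement itself is the crux and you essentially assume it. Producing a bump function on $G$ as a finite sum $\sum_i g_i(r(\gamma))\overline{g_i(s(\gamma))}$ is not a matter of shrinking supports: the paper has to build a genuine approximate identity $\Phi_{N,D,\epsilon}$ for $C_c(G)$ out of such sums (Lemma \ref{lem approxid}), and the construction needs both Lemma \ref{lem spec nbhd} (to confine $\{\gamma:\gamma\cdot U_i\cap U_i\neq\emptyset\}$ inside a prescribed neighborhood of the units, which is where principality and properness actually enter) and the density Lemma \ref{lem den}, which solves $f(u)\approx g(u)\int_G g(s(\eta))\,d\lambda^u(\eta)$ for $g$ in terms of $f$ by passing to the quotient $G\backslash\units$ and dividing by $\sqrt{F}$ where $F([u])=\int f(s(\gamma))\,d\lambda^u(\gamma)$. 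Your identification $G\cong R$ and the injectivity of $G^u\to\text{orbit}$ point in the right direction but do not substitute for this construction. Without the scalar case and the tensor-product decomposition, the proposal does not close.
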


\begin{subsection}{The Scalar Case}
In order to prove Theorem \ref{thm trivsat}, we will first show that if $G$ is principal and proper, then the action of $G$ on $C_0(\units)$ is saturated with respect to the subalgebra $C_c(\units)$.  Let $\mathscr{T}=\units\times \C$, then $\sect{\units}{\mathscr{T}}=C_0(\units)$.  Recall from Proposition \ref{ex scalar}, that the dynamical system $(C_0(\units),G,\lt)$ is proper.  To show that the action is saturated we need to show spans of elements of the form \begin{equation}\linner{E}{f}{g}(\gamma)  := f(r(\gamma))\overline{g(\gamma\inv\cdot r(\gamma))} = f(r(\gamma))\overline{g(s(\gamma))}\label{eq l inner scalar}\end{equation} are dense in $C_0(\units)\rtimes_{\lt, r}G$.  For this it suffices to show that they are dense in $\Gamma_c(G, r^*\mathscr{T})=C_c(G)$ in the inductive limit topology.  We will follow the proof in \cite{rieffel82} and construct a special approximate identity.  To construct this approximate identity, we need the following key lemma which is the groupoid analogue of \cite[Lemma p 306]{rieffel82}.   The proof follows that given in \cite{rieffel82}.

\begin{lem}Let $G$ principal and proper.  Then for each $u\in\units$ and open neighborhood $N\subset G$ of $u$,  there exists an open neighborhood $U\subset\units$ of $u$  such that $\{\gamma:\gamma\cdot U\cap U\neq \emptyset\}\subset N$.\label{lem spec nbhd}\end{lem}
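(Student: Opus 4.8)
The plan is to mimic the classical argument that, for a proper free transformation group, one can find arbitrarily small ``slices'' whose translates return to the original set only along a compact, controlled set of group elements. Fix $u \in \units$ and an open neighborhood $N \subset G$ of $u$. First I would use properness of $G$ to produce a compact neighborhood structure: let $\Phi \colon G \to \units \times \units$, $\gamma \mapsto (r(\gamma), s(\gamma))$ be the (proper) anchor map associated to the action of $G$ on $\units$. Pick a compact neighborhood $W$ of $u$ in $\units$ and consider $\Phi^{-1}(W \times W)$, which is compact by properness. The complement $C := \Phi^{-1}(W\times W) \setminus N$ is then compact, and (using that $G$ is principal, so $\Phi$ is injective) $u \notin \Phi(C)$ since $\Phi(u) = (u,u)$ and $u \in N$. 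Because $\Phi(C)$ is compact, hence closed, in $\units\times\units$, there is an open neighborhood $U_0 \times U_0$ of $(u,u)$ with $U_0 \subset W$ and $(U_0 \times U_0) \cap \Phi(C) = \emptyset$.

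Next I would check that $U := U_0$ (shrunk if necessary to be contained in $W$, which it already is) does the job. Suppose $\gamma \in G$ with $\gamma \cdot U \cap U \neq \emptyset$. Then there is $v \in U$ with $\gamma \cdot v \in U$; in groupoid terms this means $s(\gamma) = v \in U$ and $r(\gamma) = \gamma \cdot v \in U$, so $\Phi(\gamma) = (r(\gamma), s(\gamma)) \in U \times U \subset W \times W$, hence $\gamma \in \Phi^{-1}(W \times W)$. But $\Phi(\gamma) \in U \times U$ is disjoint from $\Phi(C)$, so $\gamma \notin C$; combined with $\gamma \in \Phi^{-1}(W\times W)$ and the definition $C = \Phi^{-1}(W\times W)\setminus N$, this forces $\gamma \in N$. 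Therefore $\{\gamma : \gamma \cdot U \cap U \neq \emptyset\} \subset N$, as required.

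I expect the only genuinely delicate point to be the bookkeeping around the anchor map: one must be careful that ``$G$ acts properly on $\units$'' is exactly the statement that $\Phi$ is a proper map (this is spelled out in the Conventions section), and that ``$G$ principal'' gives injectivity of $\Phi$ so that $\Phi(C)$ genuinely misses the diagonal point $(u,u)$ when $u \in N$. A secondary technical nuisance is ensuring the compactness steps are valid in the locally compact Hausdorff (not necessarily compact) setting: one should choose $W$ to be a \emph{compact} neighborhood of $u$ at the outset so that $\Phi^{-1}(W\times W)$ is compact, and then all subsequent closedness/separation arguments are routine point-set topology in the Hausdorff space $\units \times \units$. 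No analytic input is needed beyond these observations; the lemma is purely topological.
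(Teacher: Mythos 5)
Your argument is correct, but it takes a genuinely different route from the paper. The paper proves the lemma by contradiction with nets (following Rieffel's transformation-group argument): assuming the conclusion fails for some $N$, it produces nets $\gamma_W\notin N$ and $v_W\to u$ with $\gamma_W\cdot v_W\to u$, uses properness to extract a convergent subnet $\gamma_{W_i}\to\gamma$, concludes $\gamma\cdot u=u$ with $\gamma\neq u$, and contradicts freeness. You instead give a direct separation argument: properness makes $\Phi^{-1}(W\times W)$ compact, so the ``bad set'' $C=\Phi^{-1}(W\times W)\setminus N$ is compact with closed image $\Phi(C)$, freeness (injectivity of $\Phi$ together with $u\in N$) guarantees $(u,u)\notin\Phi(C)$, and a basic open box $U_0\times U_0$ separating $(u,u)$ from $\Phi(C)$ yields the desired $U$. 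The two proofs consume properness and freeness at exactly the same points, so they are logically close cousins, but yours is net-free and constructive in the sense that it exhibits $U$ explicitly as a set separating $(u,u)$ from a compact set, whereas the paper's subnet extraction is the ``sequential'' shadow of the same compactness. One small bookkeeping point in your write-up: you should say $(u,u)\notin\Phi(C)$ rather than $u\notin\Phi(C)$, and you should take $U_0$ inside the \emph{interior} of the compact neighborhood $W$; both are cosmetic and do not affect the validity of the argument.
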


\begin{proof}   By way of contradiction assume there exists an open neighborhood $N\subset G$ of $u$ such that Lemma \ref{lem spec nbhd} doesn't hold.  Then given an  open neighborhood $W\subset\units$ of $u$, there exists $\gamma_W\in G$ and $v_W\in W$ such that $\gamma_W\notin N$ and $\gamma_W\cdot v_W\in W$. For each open neighborhood $W\subset\units$ pick such a $\gamma_W\in G$ and $v_W\in W$ and order the nets $\{\gamma_W\}$ and $\{v_W\}$ by reverse inclusion.

Let $K$ be compact neighborhood of $u$ in $\units$.    Since  $(\gamma_W\cdot v_W, v_W)$ is eventually in $K\times K$, it has a convergent subnet.  By the properness of  $G$, $\{\gamma_W\}$ has a convergent subnet  $\gamma_{W_i}\rightarrow \gamma$.  Note that since $\{\gamma_{W_i}\}$ is a subnet, $W_i$ is a fundamental system for $u$, thus  $(\gamma_{W_i}\cdot v_{W_i}, v_{W_i})\rightarrow(u,u)$.  Hence $\gamma\cdot u=u$, but by assumption $\{\gamma_W\}$ is never in the open neighborhood $N$ of $u$ so $\gamma\neq u$.  This contradicts the freeness of the action.\end{proof}

Now as in \cite[p 307]{rieffel82}, we will use Lemma \ref{lem spec nbhd} to construct an approximate unit for $A$.   

\begin{lem}[Approximate Identity] Let $G$  be as in Lemma \ref{lem spec nbhd}, then there exists an approximate identity for $C_c(G)$ in the inductive limit topology given by the net $\Phi_{N,D,\epsilon}$ indexed by decreasing neighborhoods $N$ of $\units$, increasing compact subsets $D$ of $\units$, and decreasing $\epsilon>0$ which satisfies:
\begin{enumerate}
\item \label{eq app 1}$\Phi_{N,D,\epsilon}(\gamma)=0$ if $\gamma\notin N$ and $\geq 0$ otherwise,
\item \label{eq app 2}$\left|\int_G\Phi_{N,D,\epsilon}(\gamma)d\lambda^u(\gamma)-1\right|<\epsilon$ for $u\in D$.
\item \label{eq app 3}$\Phi_{N,D,\epsilon}(\gamma)=\sum{{_{{}_E}}\!\bigl\langle g_i^{N,D,\epsilon}, g_i^{N,D,\epsilon}}\bigr\rangle(\gamma)=\sum{g_i^{N,D,\epsilon}(r(\gamma))\overline{g_i^{N,D,\epsilon}(s(\gamma))}}$\\ for some $g_i^{N,D,\epsilon}\in C_c(\units).$
\end{enumerate}
\label{lem approxid}\end{lem}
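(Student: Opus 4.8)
The plan is to build the net $\Phi_{N,D,\epsilon}$ by hand, using Lemma \ref{lem spec nbhd} to produce, for each index, a finite collection of functions on $\units$ whose associated rank-one ``kernels'' $g_i(r(\gamma))\overline{g_i(s(\gamma))}$ sum to something concentrated near $\units$ and with integral near $1$. First I would fix a decreasing neighborhood $N$ of $\units$, an increasing compact $D\subset\units$, and $\epsilon>0$. For each $u\in D$ I would choose, using continuity of the Haar system, an open neighborhood $N_u\subset N$ of $u$ in $G$ small enough that $\lambda^v(N_u)$ is close to a prescribed positive value for all $v$ in a neighborhood of $u$; then I would apply Lemma \ref{lem spec nbhd} to $N_u$ to obtain an open neighborhood $U_u\subset\units$ of $u$ with $\{\gamma:\gamma\cdot U_u\cap U_u\neq\emptyset\}\subset N_u\subset N$. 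By compactness of $D$, finitely many $U_{u_1},\dots,U_{u_n}$ cover $D$; I would then take a partition-of-unity type family $\{\phi_i\}\subset C_c(\units)$ subordinate to $\{U_{u_i}\}$ and set $g_i=\phi_i^{1/2}$ suitably rescaled, so that $\Phi_{N,D,\epsilon}(\gamma):=\sum_i g_i(r(\gamma))\overline{g_i(s(\gamma))}$.

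Property \eqref{eq app 3} then holds by construction, since $\linner{E}{g_i}{g_i}(\gamma)=g_i(r(\gamma))\overline{g_i(s(\gamma))}$ by \eqref{eq l inner scalar}. For property \eqref{eq app 1}: if $\Phi_{N,D,\epsilon}(\gamma)\neq 0$ then $g_i(r(\gamma))\overline{g_i(s(\gamma))}\neq 0$ for some $i$, so both $r(\gamma)$ and $s(\gamma)$ lie in $\supp(\phi_i)\subset U_{u_i}$, whence $\gamma\cdot s(\gamma)=r(\gamma)\in U_{u_i}$ forces $\gamma\in N_{u_i}\subset N$; positivity of each term (the $i$-th term equals $|g_i(r(\gamma))|^2$ when $r(\gamma)=s(\gamma)$, but in general $g_i(r(\gamma))\overline{g_i(s(\gamma))}$ need not be nonnegative, so here one must be careful — see below). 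For property \eqref{eq app 2}, I would compute $\int_G \Phi_{N,D,\epsilon}(\gamma)\,d\lambda^u(\gamma)=\sum_i g_i(u)\int_G \overline{g_i(s(\gamma))}\,d\lambda^u(\gamma)$ and exploit that, on the support of $\gamma\mapsto g_i(s(\gamma))$ with $u$ near $u_i$, the source $s(\gamma)$ is forced to be close to $r(\gamma)=u$ (again by Lemma \ref{lem spec nbhd}, shrinking $N$), so $g_i(s(\gamma))\approx g_i(u)$; combined with the calibration of $\lambda^u(N_{u_i})$ and $\sum_i |g_i(u)|^2=\sum_i\phi_i(u)=1$ for $u\in D$, this yields the estimate. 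Finally, to confirm this is an approximate identity in the inductive limit topology, I would show that for any $\xi\in C_c(G)$, $\Phi_{N,D,\epsilon}*\xi\to\xi$ uniformly with supports eventually in a fixed compact set: this is the standard mollifier argument using that $\Phi_{N,D,\epsilon}$ is supported in shrinking $N$, has $\lambda^u$-mass near $1$ on the relevant compacta, and is $I$-norm bounded.

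The main obstacle is the positivity/normalization interplay caused by the fact that $g_i(r(\gamma))\overline{g_i(s(\gamma))}$ is genuinely a rank-one kernel and is only nonnegative on the diagonal $r(\gamma)=s(\gamma)$. To make \eqref{eq app 1} read ``$\geq 0$'' we really want each $g_i$ supported in a set $U_{u_i}$ so small that translates $\gamma$ with $\gamma\cdot U_{u_i}\cap U_{u_i}\neq\emptyset$ are so close to a unit that $\alpha$ (here trivial since scalars, but the argument must survive the transition to Theorem \ref{thm trivsat}) and $g_i$ barely move — i.e. we must arrange that on the relevant $\gamma$'s the phase of $g_i(r(\gamma))\overline{g_i(s(\gamma))}$ is controlled. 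The cleanest fix is to choose each $g_i$ real and nonnegative and to shrink $N$ and the $U_{u_i}$ enough (via a second application of Lemma \ref{lem spec nbhd}) that whenever the $i$-th term is nonzero, $s(\gamma)$ and $r(\gamma)$ are both in $U_{u_i}$, making the term $g_i(r(\gamma))g_i(s(\gamma))\geq 0$ automatically; then \eqref{eq app 1} is immediate and the only real work is the uniform estimate in \eqref{eq app 2}, which is where the continuity of $u\mapsto\int \psi\,d\lambda^u$ (as used already in Claim \ref{clm til h}) and a careful choice of the calibrating constants enter. I expect the bookkeeping of ``how small is small enough'' — choosing $N_{u_i}$, then $U_{u_i}$, then possibly re-shrinking — to be the delicate part, but no genuinely new idea beyond Lemma \ref{lem spec nbhd} and Haar-system continuity should be needed.
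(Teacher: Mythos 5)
Your overall architecture (cover $D$ by sets $U_i$ produced by Lemma \ref{lem spec nbhd}, take nonnegative bump functions supported in the $U_i$, sum the rank-one kernels, then run a mollifier argument) matches the paper's, and your treatment of properties \eqref{eq app 1} and \eqref{eq app 3} is essentially the paper's argument. But there is a genuine gap in your proof of property \eqref{eq app 2}: the normalization $\sum_i|g_i(u)|^2=\sum_i\phi_i(u)=1$ is the wrong one. What \eqref{eq app 2} requires is that $\sum_i g_i(u)\int_G g_i(s(\gamma))\,d\lambda^u(\gamma)$ be close to $1$ on $D$, and the inner integral is \emph{not} approximately $g_i(u)$. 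Writing $F_i([u]):=\int_G \phi_i(s(\gamma))\,d\lambda^u(\gamma)$ (well defined on $G\backslash\units$ by left invariance of the Haar system), your choice $g_i=\phi_i^{1/2}$ gives, even after granting $g_i(s(\gamma))\approx g_i(u)$ on the relevant set, roughly $\sum_i \phi_i(u)\,\lambda^u\bigl(r\inv(u)\cap s\inv(U_{u_i})\bigr)$, and these masses have no reason to be near $1$. Worse, they shrink toward $\lambda^u(\{u\})$, which is typically $0$, as you shrink the $U_{u_i}$ --- so your proposed remedy of ``re-shrinking'' pushes the integral toward $0$, not $1$. The ``calibration of $\lambda^u(N_{u_i})$'' cannot repair this: the needed correction factor is $1/\sqrt{F_i}$, a function on the orbit space, and it cannot be produced by shrinking neighborhoods or by a single scalar rescaling of each $g_i$.

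This is exactly the content of the paper's Lemma \ref{lem den}, which is the one genuinely new ingredient your proposal is missing: functions of the form $u\mapsto g(u)\int_G g(s(\eta))\,d\lambda^u(\eta)$ are dense in $C_c^+(\units)$ for the inductive limit topology, proved by setting $g=fQ/\sqrt{F}$ with $F([u])=\int_G f(s(\gamma))\,d\lambda^u(\gamma)$ and $Q$ a cutoff function on $G\backslash\units$. The paper first builds $f_i\in C_c(U_i)$ with $\sum f_i\equiv 1$ on $D$ (via the $h_i$, $m$, $g=\sup(h,m)$ device), then applies Lemma \ref{lem den} to choose $g_i^{N,D,\epsilon}$ with $\supp(g_i^{N,D,\epsilon})\subset\supp(f_i)$ and $\bigl|f_i(u)-g_i^{N,D,\epsilon}(u)\int_G g_i^{N,D,\epsilon}(s(\gamma))\,d\lambda^u(\gamma)\bigr|<\epsilon/M$; property \eqref{eq app 2} then follows by summing. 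If you insert this density lemma in place of your square-root-of-a-partition-of-unity construction, the rest of your outline (support control for \eqref{eq app 1}, nonnegativity from taking the $g_i$ real and nonnegative, and the inductive-limit convergence of $\Phi_{N,D,\epsilon}*F$ using conditionally compact neighborhoods and a uniform bound on $\int|\Phi_{N,D,\epsilon}|\,d\lambda^u$) goes through as you describe.
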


\begin{proof}Let $N$ be a neighborhood of $\units$, $D$ be a compact subset of $\units$, and $\epsilon>0$ be given.  Note that  $D$ is also compact in $G$,  so we can choose an open set $V\subset G$ such that $D\subset V\subset\overline{V}\subset N$. Then using Lemma \ref{lem spec nbhd} there exists a finite open covering $\{U_i\}_{i=0}^n$ of $D$ such that for each $i$, \begin{equation}\{\gamma\in G:\gamma\cdot U_i\cap U_i\neq\emptyset\}\subset V.\label{eq def V}\end{equation}
For each $i$, pick $h_i\in C_c(U_i)$ such that $h(u):=\sum{h_i(u)}$ is strictly positive on $D$.  Now let $$m=(\inf(h|_D))/2 \quad\text{and}\quad g:=\sup(h, m).$$
Note that $m\neq 0$ since $h$ actually attains a minimum on the compact set $D$.  Furthermore, since $h$ is strictly positive on $D$, this minimum must be bigger than $0$. Thus $g>0$ on $\units$ and $g$ is continuous. Therefore \begin{equation}f_i(u):=h_i(u)/g(u)\label{eq fi}\end{equation} is in $C_c(U_i)$.

\begin{claim} If $f(u):=\sum{f_i(u)}=h(u)/g(u)$ then $0\leq f\leq 1$ and $f\equiv 1$ on~ $D$.\label{clm fdef}\end{claim}

\begin{proof}We begin by  showing $0\leq f\leq 1$.   Now if $h(u)\geq m$ then $g(u)=h(u)$ and thus $f(u)=1$.  If $h(u)\leq m$ then $g(u)=m$ and so $f(u)=h(u)/m\leq m/m=1$. Thus $f\leq 1$ and since we know each $f_i$ is positive and by definition, $m\leq h$ on $D$,  we get the result.\end{proof}

To justify the next step in the construction, suppose condition $(c)$ holds in Lemma \ref{lem approxid}. Then $$\int_G\Phi_{N, D,\epsilon}(\gamma)d\lambda^u(\gamma)=\sum{g_i^{N,D,\epsilon}(u)\int_G\overline{g_i^{N,D,\epsilon}(s(\gamma))d\lambda^u(\gamma)}}.$$  So if we could find $g_i^{N,D,\epsilon}$ such that $$f_i(\gamma)=g_i^{N,D,\epsilon}(r(\gamma))\int_G\overline{g_i^{N,D,\epsilon}(s(\eta))}d\lambda^{r(\gamma)}(\eta)$$ then we'd be done, since we know $f=\sum{f_i}\equiv 1$ on $D$.  Unfortunately we can't do that, but the next lemma shows that we can approximate $f_i$ by  functions  of the above form.  This is good enough for our purposes.

\begin{lem} Functions of the form $\gamma\mapsto g(r(\gamma))\int_G g(s(\eta))d\lambda^{r(\gamma)}(\eta)$ are dense in $C_c^+(\units)$ for the inductive limit topology.\label{lem den}\end{lem}

\begin{proof}Let $f\in C_c^+(\units)$ and $\delta>0$ be given. Define $F$ on $G\backslash\units$ by 
\begin{equation} F([u]):=\int_G f(s(\gamma)) d\lambda^u(\gamma).\label{F}\end{equation}
Let $C=\{u\in\units:~f(u)\geq\delta\}$ and let $[C]$ be the image of $C$ in $G\backslash\units$.

Let $m=\inf\{F([u]):[u]\in [C]\}$.  Since $[C]$ is compact and $F$ is continuous,  $F$ attains its minimum on $[C]$.  Furthermore, since $f$ is continuous, positive and bounded away from zero on $C$ we have that that $m>0$.  

Let $U=\{u: F([u])>m/2\}$.  By the above argument $C\subset U$.  Construct $Q\in C_c(G\backslash\units)$ such that
$0\leq Q\leq 1,$ $Q([v])=1$ for $[v]\in [C],$ and
$Q([v])=0$ for $v\notin U.$

Thus $Q/\sqrt{F}\in C_c(G\backslash\units)$.  Define $g:=fQ/\sqrt{F}$.  Then $g\in C_c^+(\units)$ and $\supp(g)\subset \supp(f)$.
Furthermore, a simple calculation shows
 $$\Bigl|f(u)-g(u)\int_G g(s(\gamma))d\lambda^u(\gamma)\Bigr|<\delta.\qedhere$$
\end{proof}
So to finish the proof of Lemma \ref{lem approxid}, let $M=\text{card}\{U_i\}$.  For each $f_i$ defined in \eqref{eq fi}, use Lemma \ref{lem den} to pick $g_i^{N,D,\epsilon}$ so that \begin{equation}\Bigl|f_i(u)-g_i^{N,D,\epsilon}(u)\int_G g_i^{N,D,\epsilon}(s(\gamma))d\lambda^u(\gamma)\Bigr|<\epsilon/M\label{eq fi gi}\end{equation} with $\supp(g_i^{N,D,\epsilon})\subset \supp(f_i).$  Now define

 \begin{equation}\Phi_{N,D,\epsilon}:=\sum{{_{{}_E}}\!\bigl\langle ~g_i^{N,D,\epsilon},~ g_i^{N,D,\epsilon}~\bigr\rangle}.\label{eq phi}\end{equation} \index{$\Phi_{N,D,\epsilon}$}
 
We need to show this $\Phi_{N,D,\epsilon}$ satisfies conditions $(a)$ and $(b)$ from Lemma \ref{lem approxid}. 

For condition \eqref{eq app 1}, notice $\supp( g_i^{N,D,\epsilon})\subset\supp( f_i)\subset U_i$, and by the definition of  $U_i$ (equation \eqref{eq def V}) $$\supp\Bigl({_{{}_E}}\!\bigl\langle ~g_i^{N,D,\epsilon},~ g_i^{N,D,\epsilon}~\bigr\rangle\Bigr)\subset\{\gamma\in G: \gamma\cdot U_i\cap U_i\neq\emptyset \}\subset V\subset N.$$  Since $i$ was arbitrary we have $\supp (\Phi_{N,D,\epsilon})\subset N$ as desired.

For property  \eqref{eq app 2}, let $u\in D$, then
\begin{align*}
\Bigl|\int_G\Bigr.&\Bigl.\Phi_{N,D,\epsilon}(\gamma)d\lambda^u(\gamma)-1\Bigr|\\
&=\Bigl|\sum{g_i^{N,D,\epsilon}(r(u))\int_G\overline{g_i^{N,D,\epsilon}(s(\gamma))}}d\lambda^u(\gamma)-1\Bigr|\\
&\leq \Bigl|\sum{-(f_i(u)-g_i^{N,D,\epsilon}(r(u))\int_G\overline{g_i^{N,D,\epsilon}(s(\gamma))}d\lambda^u(\gamma))}\Bigr|+\Bigl|\sum{f_i(u)}-1\Bigr|
\end{align*} 
which is less than $\epsilon$ by our assumptions on $g_i,~f_i$ and $D$.

It is left to show  that $\{\Phi_{N,D,\epsilon}\}$ is actually an approximate identity for $C_c(G)$ in the inductive limit topology.

First we will show that $\supp(\Phi_{N,D,\epsilon})*F$ is eventually in some compact set.  Now 
$$(\Phi_{N,D,\epsilon}*F)(\gamma)=\int_G \Phi_{N,D,\epsilon}(\eta)F(\eta\inv\gamma)d\lambda^u(\eta).$$
So for $(\Phi_{N,D,\epsilon}*F)(\gamma)\neq 0$ there is an $\eta$ such that $\eta\in \supp(\Phi_{N,D,\epsilon})\subset N$ and $\eta\inv\gamma\in \supp(F)$.  That is $\gamma\in N\cdot \supp(F)$, thus \begin{equation} \supp(\Phi_{N,D,\epsilon}*F)\subset \overline{N\cdot \supp(F)}.\label{eq suppconv}\end{equation}

To continue we need a definition, a neighborhood $W$ of $\units$ is called {\emph{diagonally compact}} (respectively {\emph{conditionally compact}})\index{conditionally compact} if $VW$ and $WV$ are compact (respectively relatively compact) for every compact (respectively compact) set $V$ in $G$.

Let $N_0$ be some open neighborhood of $\units$ then by \cite[Lemma 2.7]{MW90} there exists an open symmetric conditionally compact set $W_0$ with $\overline{W_0}$ diagonally compact, such that 
$ \units\subset W_0\subset \overline{W_0}\subset N_0.$ Thus $\overline{W_0}\cdot \supp(F)$ is compact and $\supp(\Phi_{N,D,\epsilon}*F)\subset\overline{W_0}\cdot \supp(F)$ for $N\subset W_0$. 

So it  remains to show that $\{\Phi_{N,D,\epsilon}*F\}\rightarrow F$ uniformly. Let $\delta>0$ be given.  Now compute
\begin{align}|\Phi_{N,D,\epsilon}&*F(\gamma)-F(\gamma)|=\Bigl|\int_G \Phi_{N,D,\epsilon}(\eta)F(\eta\inv\gamma)d\lambda^{r(\gamma)}(\eta)-F(\gamma)\Bigr|\nonumber\\
&\leq \int_G \left|\Phi_{N,D,\epsilon}(\eta)\right|~\left|F(\eta\inv\gamma)-F(\gamma)\right|d\lambda^{r(\gamma)}(\eta)\nonumber\\
&\hspace{1 in}+\left\|F\right\|_{\infty}\Bigl|\chi_{{}_{\supp(F)}}(\gamma)\int_G \Phi_{N,D,\epsilon}(\eta)d\lambda^{r(\gamma)}(\eta)-1\Bigr|.\label{eq tri2}
\end{align}

Notice if $r(\supp (F))\subset D$ by property $(b)$ the second term of \eqref{eq tri2} is less than $\left\|F\right\|_{\infty}\cdot\epsilon$.  So if we choose $\epsilon<\delta/(2\left\|F\right\|_{\infty})$ the second term of \eqref{eq tri2} is less than $\delta/2$.  So it remains to show that the first term is eventually less than $\delta/2$ . By way of contradiction assume $$\int_G\left|\Phi_{N,D,\epsilon}(\eta)\right|~\left|F(\eta\inv\gamma)-F(\gamma)\right|d\lambda^{r(\gamma)}(\eta)\geq \delta/2 \quad\forall~ (N, D,\epsilon).  $$  Note that if we choose $W_0$ as above, and if $N\subset W_0$ then for $\gamma\notin (\overline{W_0}\cdot \supp(F)\cup \supp(F))$  the first term of \eqref{eq tri2} is $0$.  Thus we can restrict our attention to when $\gamma\in (\overline{W_0}\cdot \supp(F)\cup \supp(F))$ and notice this set is compact since it is the union of two compact sets. 

\begin{claim} There exists an open neighborhood $N$ of $\units$ such that for $\gamma\in (\overline{W_0}\cdot \supp(F)\cup \supp(F))$, $\eta\in N$  we have $\left|F(\eta\inv\gamma)-F(\gamma)\right|<\delta/4$.  \label{clm Fbd}\end{claim}

\begin{proof}By way of contradiction assume the claim is false.  Thus for each neighborhood $N$ of $\units$ we can choose $\gamma_{{}_N}\in  (\overline{W_0}\cdot \supp(F)\cup \supp(F))$ and $\eta_{{}_N}\in N$ such that $\left|F(\eta_{{}_N}\inv\gamma_{{}_N})-F(\gamma_{{}_N})\right|\geq\delta/4$.  Since $\gamma_{{}_N}$ is a net in a compact set it has a convergent subnet which by relabeling we can assume $\gamma_{{}_N}\rightarrow\gamma$.  Also take the corresponding subnet of $\eta_{{}_N}$.

Pick an $r$-relatively compact neighborhood $N_0\subset W_0$ of $\units$, and set $K=r(\overline{W_0}\cdot \supp(F)\cup \supp(F))$. Then $r\inv(K)\cap N_0$ is relatively compact and for $N\subset N_0$,  $r(\eta_{{}_N})=r(\gamma_{{}_N})\in K$.  Thus $\eta_{{}_N}\in r\inv(K)\cap N_0$ which is relatively compact by assumption.  Thus $\eta_{{}_N}$ must have a convergent subnet  $\eta_{{}_{N_i}}\rightarrow \eta$.  By our choice of $\eta_{{}_{N_i}}$ we must have $\eta\in \units$. Choose this subnet of $\eta_{{}_N}$ and the corresponding subnet of $\gamma_{{}_N}$ and relabel.  Thus $\eta_{{}_N}\inv\gamma_{{}_N}\rightarrow\eta\inv\gamma=\gamma$, hence $\left|F(\eta_N\inv\gamma_N)-F(\gamma_N)\right|\rightarrow \left|F(\gamma)-F(\gamma)\right|= 0$ a contradiction.  \end{proof}

Now a simple computation using \eqref{eq fi gi} shows that

\begin{claim} For $\epsilon<1$,  the integral $\int_G \left|\Phi_{N,D,\epsilon}(\eta)\right|d\lambda^u(\eta)<2$.\label{clm intbd}\end{claim}

 Thus if we pick $N_0$ as in Claim \ref{clm Fbd}, $D=\supp (F)$ and $\epsilon=\delta/(2\left\|F\right\|_{\infty})$, then by the discussion after \eqref{eq tri2},
\begin{align*}\left|\Phi_{N,D,\epsilon}*F(\gamma)-F(\gamma)\right|
&<\int_G \left|\Phi_{N,D,\epsilon}(\eta)\right|~\left|F(\eta\inv\gamma)-F(\gamma)\right|d\lambda^{r(\gamma)}(\eta)+\delta/2\\
&<\int_G \left|\Phi_{N,D,\epsilon}(\eta)\right|(\delta/4 )d\lambda^{r(\gamma)}(\eta)+\delta/2\leq \delta
\end{align*} by Claim \ref{clm Fbd} and property (a).
Hence $\Phi_{N,D,\epsilon}$ is an approximate identity for $A$ in the inductive limit topology so Lemma \ref{lem approxid} is proved.\end{proof}
 
Lemma \ref{lem approxid} shows that $\spn\{\linner{E}{f}{g}\}$ is  dense in $C_c(G)$ in the inductive limit topology and thus is  dense in $C^*_r(G)=C_0(\units)\rtimes_{\lt, r} G$ giving us the following theorem.

\begin{thm}\label{thm sat scal}Suppose $G$ is principal and proper. Then the dynamical system $(C_0(\units),G,\lt)$ is saturated with respect to the dense subalgebra $C_c(\units)$, that is $C_0(G\backslash\units)\cong C_0(\units)^{\lt}$ is Morita equivalent to $C_r^*(G)$.\end{thm}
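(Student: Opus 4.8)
The plan is to reduce saturation of $(C_0(\units),G,\lt)$ to an inductive-limit density statement and then feed it through Theorem \ref{thm proper morita}. By Definition \ref{defn saturated}, saturation amounts to $E=\overline{E_0}^{\,C_0(\units)\rtimes_{\lt,r}G}$ being all of $C_0(\units)\rtimes_{\lt,r}G$, i.e.\ to $E_0=\spn\{\linner{E}{f}{g}:f,g\in C_c(\units)\}$ being dense in $C_0(\units)\rtimes_{\lt,r}G$. Since $\Gamma_c(G,r^*\mathscr{T})=C_c(G)$ is by construction dense in $C_0(\units)\rtimes_{\lt,r}G$, and since the induced representations are $I$-norm bounded so that $\|\cdot\|_r\le\|\cdot\|_I$ on $C_c(G)$, it suffices to show that $E_0$ is dense in $C_c(G)$ \emph{in the inductive limit topology}: a net converging uniformly with supports in a fixed compact set $K$ converges in $\|\cdot\|_I$ (using $\sup_u\lambda^u(K)<\infty$, as in the proof of Lemma \ref{lemcptLI}), hence in $\|\cdot\|_r$.

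The heart of the matter---and the place where principality and properness enter---is Lemma \ref{lem approxid}. Fix $F\in C_c(G)$. That lemma produces a net $\Phi_{N,D,\epsilon}=\sum_i\linner{E}{g_i}{g_i}$ with $g_i\in C_c(\units)$ which is an approximate identity for $C_c(G)$ in the inductive limit topology: $\Phi_{N,D,\epsilon}*F\to F$ uniformly, with supports eventually inside the fixed compact set $\overline{W_0\cdot\supp F}$. What remains is the observation that each $\Phi_{N,D,\epsilon}*F$ actually lies in $E_0$, not merely near it. Using the adjoint formula $\linner{E}{a}{b}^{*}=\linner{E}{b}{a}$ together with the convolution identity $f*\linner{E}{a}{b}=\linner{E}{f\cdot a}{b}$ (valid whenever $f\cdot a\in C_c(\units)$, by a computation of the type in Lemma \ref{lem ind inner}), one gets $\linner{E}{g_i}{g_i}*F=(F^{*}*\linner{E}{g_i}{g_i})^{*}=\linner{E}{g_i}{F^{*}\cdot g_i}$, and $F^{*}\cdot g_i\in C_c(\units)$ because the section defining it is continuous with support contained in $r\bigl((\supp F)\inv\cap s\inv(\supp g_i)\bigr)$, which is compact. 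Hence $\Phi_{N,D,\epsilon}*F=\sum_i\linner{E}{g_i}{F^{*}\cdot g_i}\in E_0$; letting $(N,D,\epsilon)$ vary shows $E_0$ is inductive-limit dense in $C_c(G)$, hence $\|\cdot\|_r$-dense, hence $E=C_0(\units)\rtimes_{\lt,r}G$, so the system is saturated.

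To finish I would identify the objects named in the statement. For the trivial bundle $\mathscr{T}=\units\times\C$ each $\lt_\gamma$ is the identity on $\C$, so the convolution on $\Gamma_c(G,r^*\mathscr{T})=C_c(G)$ is ordinary groupoid convolution and the $C_0(\units)$-linear representations of $C_0(\units)$ are exactly the standard regular representations of $G$; thus $\|\cdot\|_r$ is the reduced groupoid norm and $C_0(\units)\rtimes_{\lt,r}G\cong C^*_r(G)$. Combining this with $C_0(\units)^{\lt}\cong C_0(G\backslash\units)$ from Proposition \ref{ex scalar} and with Theorem \ref{thm proper morita}, the completed imprimitivity bimodule realizes a Morita equivalence between $C^*_r(G)$ and $C_0(G\backslash\units)$.

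Modulo Lemma \ref{lem approxid}, which carries essentially all of the analytic content, the step that I expect to need the most care is the combination of two points: (i) verifying that the approximate-identity net genuinely outputs elements of $E_0$---the adjoint trick above, together with the check that $F^{*}\cdot g_i$ is compactly supported; and (ii) the passage from inductive-limit density to $\|\cdot\|_r$-density, which really does require the uniform support bound $\overline{W_0\cdot\supp F}$ from Lemma \ref{lem approxid} (via the diagonally compact neighborhood $W_0$), since without a common compact support uniform convergence gives no control over the $I$-norm.
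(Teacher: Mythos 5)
Your proposal is correct and takes essentially the same route as the paper, which likewise reduces saturation to inductive-limit density of $E_0$ in $C_c(G)=\Gamma_c(G,r^*\mathscr{T})$ and obtains it from the approximate identity of Lemma \ref{lem approxid}. The two points you single out --- that $\Phi_{N,D,\epsilon}*F$ genuinely lies in $E_0$ (your adjoint computation $\linner{E}{g_i}{g_i}*F=\linner{E}{g_i}{F^*\cdot g_i}$) and that inductive-limit density implies $\|\cdot\|_r$-density via the $I$-norm --- are left implicit in the paper, and your treatment of them is correct.
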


Note that this Theorem implies that the spectrum of  $C_r^*(G)$ is $G\backslash\units$ and furthermore that $C_r^*(G)$ is \emph{globally} Morita equivalent to $C_0(G\backslash\units)$. So  by applying \cite[Proposition 5.15]{tfb} we get the following result.

\begin{cor}If $G$ is a second countable groupoid acting freely and properly on its unit space, then $C_r^*(G)$ has continuous trace with trivial Dixmier-Douady invariant.\label{cor cts trace scal}\end{cor}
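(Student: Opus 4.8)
The plan is to deduce the corollary from Theorem~\ref{thm sat scal} together with the structure theory of continuous-trace $C^*$-algebras. By Theorem~\ref{thm sat scal}, when $G$ is principal (equivalently, acts freely on $\units$) and proper, the reduced $C^*$-algebra $C_r^*(G) \cong C_0(\units)\rtimes_{\lt,r}G$ is (globally) Morita equivalent to $C_0(G\backslash\units)$. Since the action of $G$ on $\units$ is proper, the orbit space $G\backslash\units$ is locally compact and Hausdorff, so $C_0(G\backslash\units)$ is a commutative $C^*$-algebra with locally compact Hausdorff spectrum.

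First I would recall that a commutative $C^*$-algebra $C_0(Y)$ with $Y$ locally compact Hausdorff is a continuous-trace $C^*$-algebra with (necessarily) trivial Dixmier--Douady invariant: its spectrum is $Y$, which is Hausdorff, and the relevant line bundle / local triviality conditions are automatic in the commutative case. Next I would invoke the fact that continuous trace is a Morita-invariant property and that the Dixmier--Douady class is preserved under (global) Morita equivalence; this is exactly what \cite[Proposition 5.15]{tfb} provides (as the paragraph preceding the corollary already signals). Applying this to the Morita equivalence $C_r^*(G)\sim_{\mathrm{Morita}} C_0(G\backslash\units)$ gives that $C_r^*(G)$ has continuous trace, with Dixmier--Douady invariant equal to that of $C_0(G\backslash\units)$, namely $0$.

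The only points needing care are bookkeeping rather than genuine obstacles: one must confirm that the Morita equivalence furnished by Theorem~\ref{thm sat scal} is a genuine (global) imprimitivity-bimodule equivalence between the $C^*$-completions, not merely an equivalence of dense subalgebras---but this is precisely the content of the saturation statement in Theorem~\ref{thm sat scal} (the bimodule $A_0$ completes to an $\A\rtimes_{\alpha,r}G$--$A^\alpha$ imprimitivity bimodule), and here $A^\alpha \cong C_0(G\backslash\units)$ by Proposition~\ref{ex scalar}. One should also note second countability is inherited so that the separability hypotheses in \cite[Proposition 5.15]{tfb} are met. With these observations in place the corollary follows immediately; there is no substantial technical hurdle, since all the real work has already been done in establishing saturation in Theorem~\ref{thm sat scal}.

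Thus the proof is essentially a one-line citation: $C_r^*(G)$ is Morita equivalent to the continuous-trace algebra $C_0(G\backslash\units)$ with trivial Dixmier--Douady class, and continuous trace together with the Dixmier--Douady invariant are preserved under Morita equivalence, so $C_r^*(G)$ has continuous trace with trivial Dixmier--Douady invariant.
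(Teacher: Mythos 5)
Your proof is correct and follows exactly the paper's own route: invoke Theorem~\ref{thm sat scal} for the global Morita equivalence of $C_r^*(G)$ with $C_0(G\backslash\units)$, then apply \cite[Proposition 5.15]{tfb} to transport continuous trace and the (trivial) Dixmier--Douady invariant across the equivalence. No differences worth noting.
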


\begin{rmk}Since $G$ is principal and proper, by \cite[Corollary 2.1.17]{AR00}, $G$ is properly amenable.  Thus \cite[Definition 2.1.13]{AR00} and \cite[Definition 2.2.2]{AR00} show that it is  topologically amenable.  So by \cite[Proposition 3.35]{AR00} we have that $G$ is measure wise amenable.  Thus \cite[Proposition 6.1.8]{AR00} gives that $C^*(G)=C^*_{\text{red}}(G)$.  We use the notation $C^*_{\text{red}}(G)$ here because it is \emph{a priori} different from $C^*_r(G)$ defined in Section  \ref{sec red cross} and \cite[Definition II.2.8]{Ren80}.  In \cite{AR00}, $\|\cdot\|_{\text{red}}$  is  defined using only those representations induced by point mass measures on $\units$, therefore $$\|\cdot\|_{\text{red}}\leq \|\cdot\|_r\leq \|\cdot\|_{\text{universal}}.$$  
Now \cite[Proposition 6.1.8]{AR00} implies that $\|\cdot\|_{\text{red}}$ is the same as the universal norm, thus $\|\cdot\|_r$ must be the same as the universal norm as well and hence $C_r^*(G)=C^*(G)$.  Thus Corollary \ref{cor cts trace scal} recovers \cite[Proposition 2.2]{MW90}.\label{rmk amen grp}\end{rmk}
\begin{rmk}If $G=H\times X$ is a transformation group groupoid the condition $G$ acts freely and properly on its unit space means that $H$ acts freely and properly on $X$.  Therefore, Corollary \ref{cor cts trace scal} and Remark \ref{rmk amen grp} give \cite[Corollary 15]{Grn77}.\end{rmk}
\end{subsection}
\begin{subsection}{Proof of Theorem \ref{thm trivsat}}

We now prove Theorem \ref{thm trivsat}, which states that if a groupoid $G$ acts freely and properly on its unit space then the action of $G$ on any upper-semicontinuous $C^*$-bundle is saturated, that is finite linear combinations of  the inner product $$\linner{E}{ f\cdot a}{ g\cdot b}:=\gamma\mapsto f(r(\gamma))a(r(\gamma))\alpha_{\gamma}((g(s(\gamma))b(s\gamma))^*)$$ $f,g\in C_c(\units),~a,b\in A=\Gamma_0(\units,\A)$ are dense in $\A\rtimes_{\alpha,r}G$. This proof  follows \cite[Appendix C]{AHRW05} fairly closely.
We proceed in several steps.  The first two steps show that we can consider functions of compact support.  Then we cover the support of the function we want to approximate by small enough neighborhoods, so that the action on these neighborhoods is almost trivial, and finally we use a partition of unity to complete the approximation.  
\\~\\
\underline{Step I}:~~  Show that the span of sections of the form 

\begin{equation}F(\gamma)=\phi(\gamma)f(r(\gamma))a(r(\gamma))g(r(\gamma))b(r(\gamma))^*\label{woalpha}\end{equation} 
are dense in $\Gamma_c(G, r^*\A)$ in the inductive limit topology, where $\phi\in C_c(G),~f,g\in C_c(\units),\text{~and}~a,b\in A=\Gamma_0(\units,\A)$.

To see this first note that  $A_0^2$ is dense in $A$.  Now from \cite[Proposition 1.3]{RW85}, we know that $$C_0(G)\otimes_{C_0(\units)}\Gamma_0(\units, \A)\cong \Gamma_0(G,r^*\A)$$ where the isomorphism given on elementary tensors by $$\Phi : f\otimes a\mapsto \bigl(\gamma\mapsto f(\gamma)a(r(\gamma))\bigr).$$

Note that $\Phi(C_c(G)\odot A_0^2)$ is a $C_0(G)$-module.  Furthermore, since $A_0^2$ is dense in $A$ we have $A_0^2(r(\gamma))$ is dense in $A(r(\gamma))$. Thus by \cite[Proposition C.24]{TFB2}, $\Phi(C_c(G)\odot A_0^2)$ is dense in $\Gamma_0(G,r^*\A)$ and hence in $\Gamma_c(G,r^*\A)$ in the uniform topology.  
 
Given $\psi\in \Gamma_c(G,r^*\A)$ pick a net $\psi_j'\rightarrow \psi$ uniformly with $\psi_j'\in \Phi(C_c(G)\odot A_0^2)$.  Pick $\omega\in C_c(G)$ such that $0\leq\omega\leq 1$ and $\omega\equiv 1$ on $\supp(\psi)$.  Then $\psi=\omega\psi=\lim\omega\psi_j'$.  Let $\psi_j=\omega\psi_j'$ then $\psi_j\rightarrow\psi$ uniformly and $\supp (\psi_j)\subset \supp (\omega)$ which is compact.  Thus $\psi_j\rightarrow\psi$ in the inductive limit topology. 

Note that every element of $\Phi(C_c(G)\odot A_0^2)$ is of the form \eqref{woalpha}. Thus it suffices to show that elements of the form \eqref{woalpha} can be approximated by elements of $E$ in the inductive limit topology.
~\\~\\
\underline{Step II}:~~  Show that elements of the form 
\begin{equation}\gamma\mapsto\phi(\gamma)f(r(\gamma))a(r(\gamma))\alpha_{\gamma}\left(\overline{g(s(\gamma))}b^*(s(\gamma))\right)\label{walpha}\end{equation}
are in $E$ with $\phi\in C_c(G),~f,g\in C_c(\units), ~a,b\in A$.

Now by Theorem \ref{thm sat scal}, the action of $G$ on $C_0(\units)$ is saturated with respect to $C_c(\units)$.  Thus, given $\epsilon >0$, we can find $g_i, h_i\in C_c(\units)$ such that

\begin{equation}\Bigl\|~\phi(\gamma)-\sum_i{g_i(r(\gamma))\overline{h_i(s(\gamma))}~}~\Bigl\|<\frac{\epsilon}{\|f\|_{\infty}\|a\|\|b\|\|g\|_{\infty}}.\label{blah}\end{equation}

Furthermore, we can arrange it so that if $W$ is a compact neighborhood of the support of $\phi$, then $\supp\left(\gamma\mapsto \sum_i{g_i(r(\gamma))\overline{h_i(s(\gamma))}}\right)\subset W$.  Now let $a_i=g_i f\cdot a ~\text{and}~b_i=h_ig\cdot b$ then
\begin{align*}
\Bigl\|~\sum_i&\linner{E}{a_i}{b_i}(\gamma)-\phi(\gamma)f(r(\gamma))a(r(\gamma))\alpha_{\gamma}\left(\overline{g(s(\gamma))}b^*(s(\gamma))\right)~\Bigr\|\\
&=\Bigl\|\sum_i{g_i(r(\gamma))\overline{h_i(s(\gamma))}}\linner{E}{f\cdot a}{g\cdot b}(\gamma)-\phi(\gamma)\linner{E}{f\cdot a}{g\cdot b}\Bigr\|\\
&\leq\Bigl\|\sum_i{g_i(r(\gamma))\overline{h_i(s(\gamma))}}-\phi(\gamma)\Bigr\|\left\|\linner{E}{ f\cdot a}{g\cdot b}\right\| <\frac{\|f\|_{\infty}\|a\|\|b\|\|g\|_{\infty}\epsilon}{\|f\|_{\infty}\|a\|\|b\|\|g\|_{\infty}}=\epsilon.\end{align*}
Since $W$ does not depend on $\epsilon$ and $\epsilon$ is arbitrary, we must have $$\gamma\mapsto\phi(\gamma)f(r(\gamma))a(r(\gamma))\alpha_{\gamma}(\overline{g(s(\gamma))}b^*(s(\gamma)))\in E.$$
~\\
\underline{Step III}:~~ Show that the functions of the form \eqref{walpha} can be used to approximate the functions of the form \eqref{woalpha} in the inductive limit topology.

\begin{rmk}At this point in \cite[Appendix C]{AHRW05}, the authors find a neighborhood $N$ of the identity in the group such that $\|b^*-\alpha_s(b^*)\|$ is small for $s\in N$. They then translate this neighborhood to find a finite collection of open sets $Nr_i$ such that $\supp(\phi)\subset \bigcup Nr_i$.  They use this open cover to construct a partition of unity, $\phi_i$ and define
\begin{equation}F_i(s):=\phi(s)\phi_i(s)(f\cdot a)\alpha_{sr_i\inv}((g\cdot b)^*).\label{eq ap aH etal}\end{equation}

This is a fairly standard approximation argument in group crossed products. Unfortunately, this argument does not work for groupoids, since the translation of an open set in a groupoid by a groupoid element is not necessarily open.  However, the translation $UV_i$ of an open set $U\subset G$ by an open set $V_i\subset G$ is open in a groupoid.  But now we do not have an element $r_i$ to plug into an analogous equation to \eqref{eq ap aH etal}.  The idea which motivates what follows is to average $\alpha_{\gamma\eta\inv}((g(r(\eta))\cdot b(r(\eta))^*)$ over $\eta\in V_i$.\end{rmk}

Fix $$F(\gamma)=\phi(\gamma)f(r(\gamma))a(r(\gamma))g(r(\gamma))b(r(\gamma))^*$$ as in \eqref{woalpha} with $\phi\in C_c(G),~f,g\in C_c(\units),~a,b\in A=\Gamma_0(\units,\A)$ and let $\epsilon>0$ be given.  Define
\begin{equation}K:=\supp(\phi).\label{supp}\end{equation}

Note that since the norm is upper semicontinuous, the set
\begin{equation}N_{\epsilon}:=\Bigl\{\gamma:\Bigl\|b^*(r(\gamma))\overline{g(r(\gamma))}-\alpha_{\gamma}\bigl(b^*(s(\gamma))\overline{g(s(\gamma))}\bigr)\Bigr\|<\frac{\epsilon}{\|\phi\|\|f\|\|a\|}\Bigr\}\label{Nepsilon}\end{equation}
is open.  Furthermore, it is nonempty since $\units\subset N_{\epsilon}$.  Now we need a lemma whose proof follows easily from the continuity of multiplication in $G$.

\begin{lem}For every $\eta\in G$, there exists an open neighborhood $U_{\eta}$ of $\eta$ such that $U_{\eta}\cdot U_{\eta}\inv\subset N_{\epsilon}$.\label{ueta}\end{lem}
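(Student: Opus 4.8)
The plan is to obtain $U_{\eta}$ directly from the joint continuity of multiplication and inversion in $G$, exploiting that $N_{\epsilon}$ is open and contains the unit space.

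The key observation I would make first is that $\eta\cdot\eta\inv=r(\eta)\in\units\subset N_{\epsilon}$, so it suffices to find a neighborhood of $(\eta,\eta)$ on which the map $(\gamma,\delta)\mapsto\gamma\delta\inv$ still takes values inside $N_{\epsilon}$. Concretely, consider $\Theta\colon\{(\gamma,\delta)\in G\times G:s(\gamma)=s(\delta)\}\to G$ given by $\Theta(\gamma,\delta)=\gamma\delta\inv$. This is continuous: for $(\gamma,\delta)$ with $s(\gamma)=s(\delta)$ we have $(\gamma,\delta\inv)\in G^{(2)}$ by continuity of inversion, and then multiplication $G^{(2)}\to G$ is continuous. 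Since $N_{\epsilon}$ is open (as noted in the text, because the norm on the bundle is upper semicontinuous) and $\Theta(\eta,\eta)=r(\eta)\in N_{\epsilon}$, the set $\Theta\inv(N_{\epsilon})$ is a relatively open neighborhood of $(\eta,\eta)$ inside $\{(\gamma,\delta):s(\gamma)=s(\delta)\}$.

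I would then unwind this: there exist open sets $V,W\subset G$ with $\eta\in V$, $\eta\in W$, and $(V\times W)\cap\{(\gamma,\delta):s(\gamma)=s(\delta)\}\subset\Theta\inv(N_{\epsilon})$. Put $U_{\eta}:=V\cap W$, an open neighborhood of $\eta$. Any element of $U_{\eta}\cdot U_{\eta}\inv$ has the form $\gamma\delta\inv$ with $\gamma,\delta\in U_{\eta}$ and $(\gamma,\delta\inv)\in G^{(2)}$, i.e.\ $s(\gamma)=s(\delta)$; hence $(\gamma,\delta)\in\Theta\inv(N_{\epsilon})$ and $\gamma\delta\inv=\Theta(\gamma,\delta)\in N_{\epsilon}$. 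Thus $U_{\eta}\cdot U_{\eta}\inv\subset N_{\epsilon}$, as required.

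I do not anticipate a genuine obstacle here; the only thing deserving a moment of care is the bookkeeping of the composability condition $s(\gamma)=s(\delta)$ — one must observe both that $\Theta$ is only defined (and continuous) on that fibred product and that every product appearing in the set $U_{\eta}\cdot U_{\eta}\inv$ automatically satisfies it, so that nothing in $U_{\eta}\cdot U_{\eta}\inv$ is overlooked. Everything else is a routine application of continuity.
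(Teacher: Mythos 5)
Your proof is correct and is exactly the argument the paper has in mind: the paper simply asserts that the lemma ``follows easily from the continuity of multiplication in $G$,'' and your write-up supplies precisely that argument, via continuity of $(\gamma,\delta)\mapsto\gamma\delta\inv$ on the fibred product, openness of $N_{\epsilon}$, and the fact that $\eta\eta\inv=r(\eta)\in\units\subset N_{\epsilon}$. The care you take with the composability condition $s(\gamma)=s(\delta)$ is the right detail to check, and it is handled correctly.
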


Now for $\eta\in K=\supp(\phi)$, let $U_{\eta}$ be an open neighborhood of $\eta$ as in Lemma \ref{ueta}.  Then $\{U_{\eta}\}_{\eta\in K}$ is an open cover of the compact set $K$, therefore there is a finite subcover $\{U_{i}\}_{i=1}^n$.  Furthermore, since $G$ is locally compact and Hausdorff, $G$ is regular.  Thus, for all $\eta\in K$ there exists a neighborhood $V_{\eta}$ of $\eta$ with compact closure such that $\eta\in V_{\eta}\subset\overline{V_{\eta}}\subset U_i$ for some $i$.  Now $\{V_{\eta}\}_{\eta\in K}$ is an open cover of the compact set $K$, therefore there is a finite subcover $\{V_{j}\}_{j=1}^m$.  We have arranged it so that for each $j$ there exists $i$  such that $\overline{V_j}\subset U_i$.  For each $j=1,\cdots, m$ pick such an $i$ and define $$\sigma:\{1,\ldots ,m\}\rightarrow \{1,\ldots, n\} \text{ so that }  \overline{V_j}\subset U_{\sigma(j)}.$$

For each $V_j$ pick a function $\psi_j$ such that $0\leq \psi_j\leq 1$, $\psi_j|_{\overline{V_j}}\equiv 1$ and $\supp(\psi_j)\subset U_{\sigma(j)}$.  
Also, pick a partition of unity $\{\phi_j\}\subset C_c^{+}(G)$ subordinate to the subcover $\{V_{j}\}_{j=1}^m$.  That is, $\supp(\phi_i)\subset V_j$,~ $0\leq \phi_j\leq 1$ ,~ $\sum{\phi_j}\equiv 1$ on $K$ and $\sum{\phi_j}\equiv 0$ off of $\bigcup V_j$.  We will use these functions to ensure that groupoid elements lie in $N_{\epsilon}$.

Define \begin{equation} \omega_j(u):=\int_G \psi_j(\gamma)d\lambda_u(\gamma).\label{omegaj}\end{equation}
Now $\omega_j$ is continuous since $\psi_j\in C_c(G)$ and $\lambda_u$ is a (right) Haar system.  Furthermore, $$\gamma\mapsto \frac{\phi_j(\gamma)}{\omega_j(s(\gamma))}$$ is continuous, since $$\supp(\phi_j)\subset V_j\subset \overline{V_j}\subset \supp(\psi_j)\subset \supp(w_j\circ s).$$

Now we are ready to define the functions we will use to approximate $F$.  
\begin{align}f_j(\gamma)&:= \phi(\gamma)\frac{\phi_j(\gamma)}{\omega_j(s(\gamma))} f(r(\gamma))a(r(\gamma))\nonumber\\
&\hspace{.7 in} \alpha_{\gamma}\Bigl(\int_G\psi_j(\eta)~\alpha_{\eta\inv}\left(b^*(r(\eta))\overline{g(r(\eta))}\right)d\lambda_{s(\gamma)}(\eta)\Bigr)
\label{fj}\end{align}

\begin{rmk}The functions $f_j$ are of the form  of equation \eqref{walpha}.

To see this notice $\gamma\mapsto \phi(\gamma)\frac{\phi_j(\gamma)}{\omega_j(s(\gamma))}\in C_c(G)$ and that $\alpha_{\eta\inv}\bigl(b^*(r(\eta))\overline{g(r(\eta))}\bigr)\in A(s(\eta))$.  Thus  $$\int_G\psi_j(\eta)~\alpha_{\eta\inv}\left(b^*(r(\eta))\overline{g(r(\eta))}\right)d\lambda_u(\eta)\in A(u).$$
 But $$\psi_j(\eta)~\alpha_{\eta\inv}\left(b^*(r(\eta))\overline{g(r(\eta))}\right)\in C_c(G)$$ since $\psi_j$ is.  So since $\lambda$ is  a Haar system we have that $$\Bigl(u\mapsto\int_G\psi_j(\eta)~\alpha_{\eta\inv}\left(b^*(r(\eta))\overline{g(r(\eta))}\right)d\lambda_u(\eta)\Bigr)\in \Gamma_c(\units, \A)$$ and thus is of the form $\overline{h(s(\gamma))}c^*(s(\gamma))$ for $c\in A$ and $h\in C_c(\units)$.
 \label{rmk fj form}\end{rmk}

\begin{rmk}If $\gamma\in \supp(f_j)$ then  $\gamma\in \supp(\phi_j)$, so that $\gamma\in U_{\sigma(j)}$. Now if the integrand in \eqref{fj} is nonzero,  then $\eta\in \supp(\psi_j)$ so that $\eta\in U_{\sigma(j)}$. That is $\gamma\eta\inv\in N_{\epsilon}$.
\label{rmk eta gamma in N}\end{rmk}

\begin{rmk} Now $\supp(f_j)\subset\supp(\phi)=K$, so that $\supp(\sum f_j)\subset K$.\label{supp sum fj}\end{rmk}
 
To finish the proof we compute:
$$\left\|F(\gamma)-\sum{(f_j(\gamma))}\right\|=\left\|\sum{\left(\phi_j(\gamma)F(\gamma)-f_j(\gamma)\right)}\right\|$$
since $\sum{\phi_j(\gamma)}\equiv 1$ on $K\supset \supp(F)\cap \supp(\sum{f_j})$. 

\begin{align*}
&=\left\| \sum\phi(\gamma)\phi_j(\gamma)f(r(\gamma))a(r(\gamma))\left(b^*(r(\gamma))\overline{g(r(\gamma))}\right.\right. \\ 
&\hspace{1 in}\Bigl.\Bigl. -\int_G\frac{\psi_j(\eta)}{\omega_j(s(\gamma))}~\alpha_{\gamma\eta\inv}\left(b^*(r(\eta))\overline{g(r(\eta))}\right)d\lambda_{s(\gamma)}(\eta)\Bigr) \Bigr\|\\
&\leq \|\phi\|_{\infty}\|f\|_{\infty}\|a\|\chi_{{}_K}(\gamma)\left\|\sum\phi_j(\gamma)\left(b^*(r(\gamma))\overline{g(r(\gamma))}\right.\right.\\
&\hspace{1 in} \Bigl.\Bigl. -\int_G\frac{\psi_j(\eta)}{\omega_j(s(\gamma))}~\alpha_{\gamma\eta\inv}\left(b^*(r(\eta))\overline{g(r(\eta))}\right)d\lambda_{s(\gamma)}(\eta)\Bigr)\Bigr\|\\
&\leq \|\phi\|_{\infty}\|f\|_{\infty}\|a\|\chi_{{}_K}(\gamma)\sum\Bigl(\phi_j(\gamma) \Bigl\|\Bigl(\int_G\frac{\psi_j(\eta)}{\omega_j(s(\gamma))}~b^*(r(\gamma))\cdot\Bigr.\Bigr.\Bigr.\\
&\hspace{.2 in}\Bigl.\Bigl.\Bigl. \overline{g(r(\gamma))}d\lambda_{s(\gamma)}(\eta)-\int_G\frac{\psi_j(\eta)}{\omega_j(s(\gamma))}~\alpha_{\gamma\eta\inv}\left(b^*(r(\eta))\overline{g(r(\eta))}\right)d\lambda_{s(\gamma)}(\eta)\Bigr)\Bigr\|\Bigr)
\end{align*}
since $\int_G\frac{\psi_j(\eta)}{\omega_j(u)}d\lambda_{u}(\eta)\equiv 1$ on $\supp(\phi_j)$ and $b^*(r(\gamma))\overline{g(r(\gamma))}$ doesn't depend on $\eta$.

\begin{align}&\leq \|\phi\|_{\infty}\|f\|_{\infty}\|a\|\chi_{{}_K}(\gamma)\sum \Bigl(\phi_j(\gamma)\int_G\frac{\psi_j(\eta)}{\omega_j(s(\gamma))}\Bigr.\nonumber\\
&\left.\hspace{.3 in} \left\|~b^*(r(\gamma))\overline{g(r(\gamma))}-~\alpha_{\gamma\eta\inv}\left(b^*(r(\eta))\overline{g(r(\eta))}\right)\right\|d\lambda_{s(\gamma)}(\eta)\right).\label{normint}\end{align}
But by Remark \ref{rmk eta gamma in N} and equation \eqref{Nepsilon}, we know that 
$$\left\|~b^*(r(\gamma))\right.\left.\overline{g(r(\gamma))}-\alpha_{\gamma\eta\inv}\left(b^*(r(\eta))\overline{g(r(\eta))}\right)\right\|<\frac{\epsilon}{\|\phi\|_{\infty}\|f\|_{\infty}\|a\|}.$$
So that \eqref{normint} is less than
\begin{align*}
&\|\phi\|_{\infty}\|f\|_{\infty}\|a\|\chi_{{}_K}(\gamma)\sum \Bigl(\frac{\epsilon}{\|\phi\|_{\infty}\|f\|_{\infty}\|a\|}\phi_j(\gamma)\int_G\frac{\psi_j(\eta)}{\omega_j(s(\gamma))} d\lambda_{s(\gamma)}(\eta)\Bigr)\\
&=\|\phi\|_{\infty}\|f\|_{\infty}\|a\|\frac{\epsilon}{\|\phi\|_{\infty}\|f\|_{\infty}\|a\|}=\epsilon
\end{align*}
since $\int_G\frac{\psi_j(\eta)}{\omega_j(u)}d\lambda_{u}(\eta)\equiv 1$ on $\supp(\phi_j)$ and $\sum{\phi_j(\gamma)}\equiv 1$ on $K$.

Thus we  can approximate $F$ by $\sum f_j$ in the inductive limit topology.  Now Steps I and II along with Remark \ref{rmk fj form} gives the density of  $\spn\{\linner{E}{f\cdot a}{g\cdot b}:f,g\in C_c(\units), a,b\in A\}$  in $\Gamma_c(G,r^*\A)$ in the inductive limit topology and hence $\spn\{\linner{E}{f\cdot a}{g\cdot b}:f,g\in C_c(\units), a,b\in A\}$ is dense in $\A\rtimes_{\alpha, r}G$.  Thus the dynamical system $(\A,G,\alpha)$ is saturated and we obtain Theorem \ref{thm trivsat}.

\begin{rmk} As in Remark \ref{rmk amen grp} for $G$ acting freely and properly on its unit space, $\A\rtimes_{\alpha, r}G=\A\rtimes_{\alpha} G$ using \cite[Proposition 6.1.10]{AR00} or \cite[Theorem 3.6]{Ren91}. So that $A^{\alpha}$ is Morita equivalent to $\A\rtimes_{\alpha} G$.\end{rmk}
\end{subsection}
\end{section}
\providecommand{\bysame}{\leavevmode\hbox to3em{\hrulefill}\thinspace}
\providecommand{\MR}{\relax\ifhmode\unskip\space\fi MR }
\providecommand{\MRhref}[2]{%
  \href{http://www.ams.org/mathscinet-getitem?mr=#1}{#2}
}
\providecommand{\href}[2]{#2}

\end{document}